\newcommand{\R}{\mathbb{R}}
\newcommand{\Z}{\mathbb{Z}}
\newcommand{\Zf}[1]{\Z/{#1}\Z}
\newcommand{\Q}{\mathbb{Q}}
\newcommand{\N}{\mathbb{N}}
\newcommand{\K}{\mathbb{K}}
\newcommand{\E}{\mathbb{E}}
\newcommand{\ov}{\overline}
\newcommand{\GRH}{\mathsf{GRH}(q)}
\newcommand{\Sp}{\mathsf{Sp}(q,k)}
\newcommand{\leqs}{\leqslant}
\newcommand{\geqs}{\geqslant}
\newcommand{\floor}[1]{\left\lfloor{#1}\right\rfloor}
\newcommand{\Sm}{\mathscr{S}_q}
\newcommand{\D}{\mathfrak{D}}
\newcommand{\qchi}{q^*(\chi)}
\newcommand{\clq}{c_{\vg{\ell}}(q)}
\newcommand{\bigO}{O}
\DeclareMathOperator{\Ci}{Ci}
\newcommand{\PP}{\mathcal{P}_X^{\vg{\ell}}}
\newcommand{\PS}{\mathcal{P}_X^{*\vg{\ell}}}
\newcommand{\QQ}{\mathcal{Q}_X^{\vg{\ell}}}
\newcommand{\QS}{Q^*_X}
\newcommand{\ZZ}{\mathcal{Z}_X^{\vg{\ell}}}
\newcommand{\LL}{\mathcal{L}^{\vg{\ell}}}
\newcommand{\LLa}{\mathcal{L}^{\vg{\ell}^{(1)}}}
\newcommand{\LLb}{\mathcal{L}^{\vg{\ell}^{(2)}}}
\newcommand{\bet}{\beta_{\vg{\ell}}}
\newcommand{\betn}{\beta_{-1}}
\newcommand{\alphn}{\alpha_{-1}}
\newcommand{\dl}{d_{\vg{\ell}}}
\newcommand{\vg}{\boldsymbol}
\newtheorem{theorem}{Theorem}[section]
\newtheorem{lemma}[theorem]{Lemma}
\newtheorem{conjecture}[theorem]{Conjecture}
\newtheorem{proposition}[theorem]{Proposition}
\begin{document}
 
\title[Moments of the Hurwitz zeta function]{Moments of the Hurwitz zeta function on the critical line}
\author{Anurag Sahay}
\address{Department of Mathematics, University of Rochester \newline Rochester, NY 14627, USA}
\email{\href{mailto:anuragsahay@rochester.edu}{anuragsahay@rochester.edu}}

\begin{abstract}

We study the moments $M_k(T;\alpha) = \int_T^{2T} |\zeta(s,\alpha)|^{2k}\,dt$ of the Hurwitz zeta function $\zeta(s,\alpha)$ on the critical line, $s = 1/2 + it$ with a rational shift $\alpha \in \Q$. We conjecture, in analogy with the Riemann zeta function, that $M_k(T;\alpha) \sim c_k(\alpha) T (\log T)^{k^2}$ . Using heuristics from analytic number theory and random matrix theory, we conjecturally compute $c_k(\alpha)$. In the process, we investigate moments of products of Dirichlet $L$-functions on the critical line. We prove some of our conjectures for the cases $k = 1,2$.

\end{abstract}

\maketitle

\section{Introduction}

Estimating the moments of the Riemann zeta function $\zeta(s)$ on the critical line, 
\begin{equation*} M_k(T) = \int_T^{2T} \left|\zeta\left(\tfrac{1}{2}+it\right)\right|^{2k} \,dt, \end{equation*} 
is a classical problem in analytic number theory (see \cite[Chapter VII]{titchmarsh}). It is widely believed that $M_k(T) \sim c_k T (\log T)^{k^2}$ for all real $k \geqs 0$, where $c_k$ is a fixed positive constant depending only on $k$. This conjecture is trivial for $k = 0$, was proved by Hardy and Littlewood 
\cite{hardy1916contributions} for $k = 1$, was proved by Ingham \cite{ingham1928mean} for $k = 2$, and is wide open in all other cases. 

Despite the history and intractability of the problem, very precise conjectures for the exact value of $c_k$ exist. On the basis of number theoretic calculations, Conrey and Ghosh \cite{conreyghosh} conjectured the value of $c_k$ for $k = 3$ and by a different, but still number theoretic, method Conrey and Gonek \cite{conreygonek} conjectured the value of $c_k$ for $k = 3,4$. Finally, using heuristics modeling $\zeta(s)$ by characteristic polynomials of random matrices from the Gaussian unitary ensemble, Keating and Snaith \cite{keatingsnaithzeta} conjectured the value of $c_k$ for all $k > 0$, agreeing with the conjectures from \cite{conreyghosh} and \cite{conreygonek}.

The analogy with random matrix theory has led to many fruitful conjectures for moments of $L$-functions; see, for example, \cite{conreymomentsrecipe} and the references therein for details.

A weaker, and hence theoretically more tractable version of the above conjecture is the estimate $M_k(T) \asymp_k T(\log 	T)^{k^2}$. By work of Ramachandra \cite{ramachandra1978some,ramachandra1980some,ramachandra1980some2}, and Heath-Brown \cite{heath1981fractional}, the lower bound $M_k(T) \gg_k T(\log T)^{k^2}$ was known conditionally on the Riemann Hypothesis (RH) for $k>0$, and by work of {Radziwi\l\l} and Soundararajan \cite{radziwill2013continuous}, it was known unconditionally for all $k \geqs 1$. Recent work of Heap and Soundararajan \cite{heap2022lower} establishes the lower bound unconditionally for all $k > 0$. 

For the upper bound, Soundararajan \cite{soundararajan2009moments} had shown on RH that $M_k(T) \ll_{k,\epsilon} T (\log T)^{k^2+\epsilon}$ for every $\epsilon > 0$ and $k > 0$. Harper \cite{harper2013sharp} removed the dependence on $\epsilon$, conditionally establishing the sharp upper bound for every $k > 0$. The upper bound was known unconditionally for $k = 1/n$, $n \in \N$ due to Heath-Brown \cite{heath1981fractional}, and for $k = 1 + 1/n$, $n \in \N$ due to Bettin, Chandee and {Radziwi\l\l} \cite{bettin2017mean}. Recently, Heap, {Radziwi\l\l} and Soundararajan \cite{heap2019sharp} subsumed both of these results by proving the upper bound unconditionally for $0 \leqs k \leqs 2$.

The object of this paper is to investigate analogous moments of the Hurwitz zeta function, $\zeta(s,\alpha)$. For $0 < \alpha \leqs 1$ and for $\Re s > 1$, $\zeta(s,\alpha)$ is defined by the series
\begin{equation*} \zeta(s,\alpha) = \sum_{n=0}^\infty \frac{1}{(n+\alpha)^s}. \end{equation*}
As with the Riemann zeta function, the Hurwitz zeta function can be continued to a meremorphic function on the entire complex plane with a simple pole at $s = 1$ satisfying the Hurwitz relation (an analogue of the functional equation, see \cite{knoppsinaifunctional}). Clearly, $\zeta(s,1) = \zeta(s)$ and $\zeta(s,1/2) = (2^s - 1) \zeta(s)$. For these values of $\alpha$, thus, $\zeta(s,\alpha)$ has an Euler product, derived from the usual Euler product for $\zeta(s)$. However, for $0 < \alpha < 1$, $\alpha \neq 1/2$, $\zeta(s,\alpha)$ does not have an Euler product, and, in fact, the behaviour of its zeroes is very different from that of $\zeta(s)$. Spira \cite{spira1976zeros} showed that like $\zeta(s)$, $\zeta(s,\alpha)$ may have trivial zeros on the negative real line, and also that $\zeta(s,\alpha)$ is zero-free in the region $\Re s \geqs 1 + \alpha$. However, it is well-known, due to Davenport and Heilbronn \cite{davenport1936zeros} for the cases of rational or transcendental $\alpha$, and due to Cassels \cite{cassels1961footnote} for the case of algebraic irrational $\alpha$ that if $\alpha \neq 1/2,1$, then $\zeta(s,\alpha)$ always has a zero in the strip $1 < \Re s < 1 + \delta$ for every $\delta > 0$. Voronin showed \cite{voronin1976zeros} that for rational $\alpha \neq 1/2, 1$, and fixed $\sigma_1,\sigma_2$ with $1/2 < \sigma_1 < \sigma_2 < 1$, there are infinitely many zeros of $\zeta(s,\alpha)$ satisfying $\sigma_1<\Re s < \sigma_2$. This result was established also for transcendental $\alpha$ by Gonek \cite{gonekthesis}. Finally, Gonek \cite{gonek1981zeros} showed that if $\alpha = a/q$ with $(a,q) = 1$, and $\varphi(q) = 2$, where $\varphi$ is Euler's totient function, then $\zeta(s,\alpha)$ has a positive proportion of its nontrivial zeros off the critical line $\Re s = 1/2$. All of these are in contrast to the expected behaviour of $\zeta(s)$.


To study the moments of the Hurwitz zeta functions in the critical line, we define in analogy with $M_k(T)$, 
\begin{equation} M_k(T;\alpha) = \int_T^{2T} \left|\zeta\left(\tfrac{1}{2}+it,\alpha\right)\right|^{2k} \,dt, \label{def:Mkalpha}\end{equation}
so that $M_k(T;1) = M_k(T)$. One might expect the following:

\begin{conjecture} 
\label{conj: hurwitzmoments}
Let $0 < \alpha \leqs 1$ be a fixed rational and $k > 0$ be a fixed real number. Then for some constant $c_k(\alpha)$, we have
\begin{equation*} M_k(T;\alpha) \sim c_k(\alpha) T (\log T)^{k^2} \end{equation*}
as $T \to \infty$.
\end{conjecture}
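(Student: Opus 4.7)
My plan is to exploit the fact that for rational $\alpha = a/q$ with $(a,q) = 1$, the Hurwitz zeta function decomposes as a finite linear combination of Dirichlet $L$-functions:
\[
\zeta(s, a/q) = \frac{q^s}{\varphi(q)} \sum_{\chi \bmod q} \bar{\chi}(a) L(s,\chi).
\]
This converts $M_k(T;\alpha)$ into a finite linear combination (over $2k$-tuples of characters mod $q$) of mixed mean values of the shape
\[
\int_T^{2T} \prod_{i=1}^k L(1/2+it, \chi_i) \overline{L(1/2+it, \chi'_i)} \, dt,
\]
so the study of such mean values becomes the central object of the paper.

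To predict the constant $c_k(\alpha)$, I would use the random matrix heuristic of Keating and Snaith: each $L(s,\chi)$ is modeled by the characteristic polynomial of a large random unitary matrix, predicting $\int_T^{2T} |L(1/2+it,\chi)|^{2k} dt \sim g_k a_k(\chi) T (\log T)^{k^2}$ for primitive $\chi$. Extended to the mixed integrals above, only ``balanced'' tuples, where the $\chi_i$ and $\chi'_j$ are matched so as to produce $k^2$ pole-like contributions from the Euler product, contribute at the level of $(\log T)^{k^2}$; unbalanced tuples are smaller. Summing the resulting main terms, weighted by the character phases $\bar{\chi}_i(a)\chi'_i(a)$, should yield $c_k(\alpha)$ explicitly as a finite combinatorial sum involving Euler products over primes not dividing $q$ together with a universal random-matrix factor.

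For the rigorous results at $k=1,2$, I would split the resulting sum of mean values into a diagonal contribution (where the unordered tuples $\{\chi_i\}$ and $\{\chi'_i\}$ coincide) and an off-diagonal remainder. For $k=1$, the diagonal is handled by the classical twisted second moment of a Dirichlet $L$-function, producing the main term of order $T \log T$, while the off-diagonal terms have no pole at $s=1$ and contribute $O(T)$ via an approximate functional equation. For $k=2$, the diagonal is handled by Ingham's fourth-moment asymptotic for Dirichlet $L$-functions (or its extension to pairs), while the off-diagonal mixed fourth moments of four $L$-functions with genuine character mismatch require a more delicate divisor-sum and shifted-convolution analysis in the style of Ingham. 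Matching the constants produced here with the random matrix prediction of the previous paragraph is an important consistency check.

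The principal obstacle is the off-diagonal fourth-moment analysis at $k=2$: the mixed mean values can contribute at the main-term level through the functional equation, and extracting the correct constant (rather than just an upper bound) is the computational heart of the argument. Extending the proof to $k \geqs 3$ appears out of reach, since even the single-$L$-function asymptotic $\int_T^{2T}|L(1/2+it,\chi)|^{2k}\,dt \sim g_k a_k(\chi) T(\log T)^{k^2}$ is open in that range, and the mixed mean values required here are at least as deep. Thus Conjecture~\ref{conj: hurwitzmoments} for general $k$ is provably as hard as the corresponding conjecture for $\zeta(s)$ itself.
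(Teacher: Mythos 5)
Your high-level strategy is the same as the paper's: decompose $\zeta(s,a/q)$ as $\frac{q^s}{\varphi(q)}\sum_\chi \overline{\chi}(a)L(s,\chi)$, reduce $M_k(T;\alpha)$ to a finite sum of mixed moments of Dirichlet $L$-functions, predict the constant heuristically, and prove the cases $k=1,2$ via a diagonal/off-diagonal splitting. The differences worth noting are methodological. Where you propose to conjecture the constant by applying Keating--Snaith directly to each $L(s,\chi)$, the paper instead uses a hybrid Euler--Hadamard product $L(s,\chi)\approx P_X(s,\chi)Z_X(s,\chi)$ (\`a la Gonek--Hughes--Keating and Bui--Keating), computes the second moment of the finite Euler product $P_X$ rigorously (Theorem~\ref{thm: eulerproductmoment}), invokes a splitting conjecture (Conjecture~\ref{conj: splitting}) plus a random-matrix conjecture only for $Z_X$ (Conjecture~\ref{conj: randommatrices}), and then multiplies. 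Both routes yield the same constant, but the paper's framework isolates which part of the prediction is arithmetic and which is random-matrix universality, and this modularity is what lets it prove the $k=1$ case of the splitting conjecture unconditionally (Theorem~\ref{thm: smallk}).

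Two points in your proposal are imprecise in ways that matter. First, your notion of ``balanced tuples'' (or ``diagonal'' terms where the unordered multisets $\{\chi_i\}$ and $\{\chi'_i\}$ agree) is too coarse. In the paper's notation, only the \emph{primary} diagonal terms $\vg{\ell}^{(1)}=\vg{\ell}^{(2)}=k\vg{\delta}^\chi$ (all $k$ copies concentrated on a single character) produce the full $(\log T)^{k^2}$ power. Any other diagonal pairing with $\vg{\ell}^{(1)}=\vg{\ell}^{(2)}$ has $\lambda(\vg{\ell})=\sum_\chi \ell_\chi^2<k^2$ and contributes only $O(T(\log T)^{k^2-1+\epsilon})$ (Lemma~\ref{lem: L-upperbound}). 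So, e.g., at $k=2$ the mixed product $|L(s,\chi)L(s,\nu)|^2$ with $\chi\neq\nu$ is only $\asymp T(\log T)^2$, not part of the main term; you will mislead yourself if you sum contributions from all balanced tuples. Second, at $k=1$, observing that the off-diagonal integrand $L(s,\chi)\overline{L(s,\nu)}$ ``has no pole at $s=1$'' is a heuristic, not a bound; some quantitative input is needed, and the paper uses the estimate $\int_T^{2T}L(s,\chi)\overline{L(s,\nu)}\,dt\ll_q T(\log T)^{3/4}$ due to Ishikawa. Similarly, at $k=2$ the paper does not re-derive the fourth-moment asymptotics from scratch but cites Topacogullari's precise results for $|L(s,\chi)|^4$ and $|L(s,\chi)L(s,\nu)|^2$; the remaining off-diagonal terms are then dispatched with Cauchy--Schwarz rather than a new shifted-convolution analysis.
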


When $k = 1$, this is a theorem due to Rane \cite[Theorem 2]{rane1980hurwitz}, with $c_1(\alpha) = 1$. In fact, he proved for $0<\alpha \leqs 1$ (not necessarily rational),
\begin{equation} \label{eqn: rane} \begin{split} M_1(T;\alpha) {}= T\log T + B(\alpha) T - \frac{1}{\alpha} + \bigO\left(\frac{T^{1/2}\log T}{\alpha^{1/2}}\right) \end{split} \end{equation}
uniformly in $\alpha$ and $T$ with an effective constant $B(\alpha)$. This was improved further by several authors, with the current best error term due to Zhan \cite[Theorem 2]{zhan1992mean}.

Winston Heap (private communication) has indicated that the above conjecture may not extend to irrational shifts $\alpha$ if $k \neq 1$, and the true behaviour in this case appears quite delicate. Heap and the author are currently exploring the higher moments and distributions of Hurwitz zeta functions with irrational shifts in an ongoing project. A Diophantine problem with a paucity of non-diagonal solutions connected to these moments was considered by Heap, the author, and Wooley in \cite{heap2022paucity} (see also \cite[Theorem 26]{bourgain2014multiplicative}).

For $k = 2$, the conjecture can be proved using methods for fourth moments of $L$-functions of degree $1$. This was done in an unpublished section of Andersson's thesis \cite[pp. 71-72]{anderssonthesis}. We restate and reprove this result here for convenience:

\begin{theorem} \label{thm: rationalfourthmoment}
Let $a,q \geqs 1$ be fixed integers with $(a,q) = 1$, $1 \leqs a \leqs q$. Then, for $\alpha = a/q$,
\begin{equation*} \begin{split} M_2(T;\alpha) = \int_T^{2T}\left|\zeta\left(\tfrac{1}{2}+it,\alpha\right)\right|^4\,dt \sim \frac{T(\log T)^4}{2\pi^2 q}\prod_{p \mid q}\left(1 - \frac{1}{p+1}\right),\end{split}  \end{equation*}
as $T \to \infty$. That is, Conjecture~\ref{conj: hurwitzmoments} is true for $k = 2$ and $\alpha = a/q$, with
\begin{equation*} c_2(\alpha) = \frac{1}{2\pi^2 q}\prod_{p \mid q}\left(1 - \frac{1}{p+1}\right)= \frac{c_2}{q}\prod_{p \mid q}\left(1 - \frac{1}{p+1}\right), \end{equation*}
where $c_2 = c_2(1) = 1/(2\pi^2)$ is the usual proportionality constant for the fourth moment of $\zeta(s)$. More precisely, we have 
\begin{equation*} M_2(T;\alpha) = c_2(\alpha) T(\log T)^4 + O_q(T(\log T)^{3}). \end{equation*}
\end{theorem}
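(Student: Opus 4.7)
The plan is to use the character decomposition
\[ \zeta(s, a/q) = \frac{q^s}{\varphi(q)} \sum_{\chi \bmod q} \bar\chi(a) L(s, \chi), \]
valid for $(a,q)=1$, to rewrite $M_2(T; a/q)$ as a weighted sum over 4-tuples $\vec\chi = (\chi_1, \chi_2, \chi_3, \chi_4)$ of characters mod $q$ of the mean values
\[ I(\vec\chi; T) = \int_T^{2T} L(\tfrac{1}{2}+it, \chi_1) L(\tfrac{1}{2}+it, \chi_2) \overline{L(\tfrac{1}{2}+it, \chi_3) L(\tfrac{1}{2}+it, \chi_4)}\, dt, \]
reducing the problem to a fourth-moment asymptotic for products of Dirichlet $L$-functions.

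To evaluate each $I(\vec\chi;T)$, I would apply the approximate functional equation to each pair $L(s,\chi_i)L(s,\chi_j)$ and extract the diagonal, following Ingham's classical treatment of $\int |\zeta|^4$. The arithmetic of the diagonal is encoded by the Ramanujan-type identity
\[ \sum_{n \geqs 1} \frac{d_{\chi_1, \chi_2}(n) \overline{d_{\chi_3, \chi_4}(n)}}{n^s} = \frac{L(s, \chi_1 \bar\chi_3) L(s, \chi_1 \bar\chi_4) L(s, \chi_2 \bar\chi_3) L(s, \chi_2 \bar\chi_4)}{L(2s, \chi_1 \chi_2 \bar\chi_3 \bar\chi_4)}, \]
where $d_{\chi, \chi'}$ is the Dirichlet convolution. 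Since the denominator is holomorphic and nonzero at $s=1$, the pole order there is exactly the number of pairs $(i,j) \in \{1,2\} \times \{3,4\}$ with $\chi_i = \chi_j$. An elementary check shows this order is $0, 1, 2,$ or $4$, with $4$ attained \emph{only} when $\chi_1 = \chi_2 = \chi_3 = \chi_4$. Hence every non-``all-equal'' $\vec\chi$ satisfies $I(\vec\chi; T) = O_q(T(\log T)^2)$, contributing a total of $O_q(T(\log T)^2)$ to $M_2(T;a/q)$.

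The main contribution comes from the $\varphi(q)$ ``all-equal'' quadruples $\chi_1 = \cdots = \chi_4 = \chi$. Here $|\chi(a)|^4 = 1$, and using $d_{\chi,\chi}(n) = \chi(n) d(n)$,
\[ \sum_n \frac{|d_{\chi, \chi}(n)|^2}{n^s} = \frac{\zeta(s)^4}{\zeta(2s)} \prod_{p \mid q} \frac{(1-p^{-s})^3}{1+p^{-s}}. \]
Evaluating the residue at $s=1$ and adding the symmetric dual sum from the approximate functional equation, exactly as in Ingham, yields
\[ \int_T^{2T} |L(\tfrac{1}{2}+it, \chi)|^4\, dt = \frac{T(\log T)^4}{2\pi^2} \prod_{p \mid q} \frac{(1-1/p)^3}{1+1/p} + O_q(T(\log T)^3). \]
Combining with the prefactor $q^2/\varphi(q)^4 \cdot \varphi(q) = q^2/\varphi(q)^3$ and using $\varphi(q)/q = \prod_{p\mid q}(1-1/p)$, the leading constant simplifies:
\[ \frac{q^2}{\varphi(q)^3} \prod_{p \mid q} \frac{(1-1/p)^3}{1+1/p} = \frac{1}{q} \prod_{p \mid q}\frac{p}{p+1} = \frac{1}{q} \prod_{p \mid q}\left(1 - \frac{1}{p+1}\right), \]
recovering the claimed $c_2(a/q)$.

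The main technical obstacle is establishing the asymptotic for $I(\vec\chi; T)$ with a sufficiently sharp error uniformly across $\vec\chi$. For fixed $q$, this is within reach of the classical fourth-moment techniques of Ingham and Heath-Brown, but tracking both the subleading coefficients of the degree-$4$ polynomial main term in the ``all-equal'' case and the off-diagonal contributions of size up to $T(\log T)^2$ in the ``two pairs principal'' case requires careful bookkeeping. All such contributions fit within the claimed $O_q(T(\log T)^3)$ error.
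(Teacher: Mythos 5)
Your proposal is sound and the constant bookkeeping is correct, but it takes a genuinely different route from the paper's argument. Both start from the orthogonality identity $\zeta(s,a/q) = \frac{q^s}{\varphi(q)}\sum_\chi \overline{\chi}(a)L(s,\chi)$, but after that they diverge. You propose to evaluate each cross-moment $I(\vec\chi;T)$ directly by Ingham's approximate functional equation method, sorting the quadruples by the pole order at $s=1$ of the associated Ramanujan--type Dirichlet series, and your observation that the pole order is $\#\{(i,j): \chi_i=\chi_j\} \in \{0,1,2,4\}$, with $4$ forcing all four characters equal, is a clean and correct classification. The paper instead treats only two of the cross-moments precisely --- the all-equal case $\int|L(s,\chi)|^4\,dt$ and the ``two distinct pairs'' case $\int|L(s,\chi)L(s,\nu)|^2\,dt$ --- by quoting Topacogullari's explicit asymptotics, and then bounds every remaining mixed moment by Cauchy--Schwarz reducing to those two. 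This is the crux of the difference: the paper never needs an asymptotic for, say, $\int L(s,\chi_1)L(s,\chi_2)\overline{L(s,\chi_3)L(s,\chi_4)}\,dt$ with four (or three) genuinely distinct characters, whereas your approach would require an Ingham/Heath--Brown analysis for each such quadruple, including handling the off-diagonal terms below the level of the main term. You have flagged this honestly (``the main technical obstacle...''), and indeed that analysis is essentially what goes into a theorem like Topacogullari's. What your approach buys is uniformity of method and, in principle, full lower-order polynomial terms for every $I(\vec\chi;T)$; what the paper's buys is a much shorter proof once the existing literature is invoked, since the crude Cauchy--Schwarz bound $O_q(T(\log T)^2)$ for the off-diagonal quadruples is all that is needed against the $T(\log T)^4$ main term. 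Your evaluation of the local factors, the verification that $\frac{q^2}{\varphi(q)^3}\prod_{p\mid q}\frac{(1-1/p)^3}{1+1/p} = \frac{1}{q}\prod_{p\mid q}\frac{p}{p+1}$, and the final constant $c_2(a/q)$ all agree with the paper.
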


We show later that this agrees with our conjecture for $c_k(\alpha)$. In principle, one could also work out the lower order terms in this asymptotic. 

Our goal for the rest of the paper is to provide evidence for Conjecture~\ref{conj: hurwitzmoments} when $k\in\N$. In this case, $M_k(T;\alpha)$ can be related to the mean square of products of Dirichlet $L$-functions on the critical line. 

To explain this connection, we fix some notation that will be used throughout the paper. We assume $\alpha = a/q$ with $(a,q) = 1$ and $1 \leqs a \leqs q$. Dirichlet characters will be denoted $\chi$ or $\nu$, and will be modulo $q$ unless noted otherwise. We will use bolded, lower case (Greek or Latin) letters such as $\vg{\ell}$ for tuples of natural numbers indexed by characters modulo $q$. Thus, if $\vg{\ell}$ is such a tuple, we think of it as a function $\vg{\ell}: \mathcal{D}(q) \to \N$ where $\mathcal{D}(q)$ is the set of Dirichlet characters modulo $q$. We denote $\vg{\ell}(\chi)$ as $\ell_\chi$. Further, we define,

\begin{equation*} |\vg{\ell}| = \sum_\chi \ell_\chi,\,\lambda(\vg{\ell}) = \sum_\chi \ell_\chi^2,\, \LL(s) = \prod_\chi L(s,\chi)^{\ell_\chi}. \end{equation*}
Here, and later, sums and products over $\chi$ or $\nu$ run over $\mathcal{D}(q)$. If $\vg{\ell}$ is clear from context, we suppress it and denote $\lambda(\vg{\ell})$ simply as $\lambda$. Finally, we denote by $\dl(n)$ the coefficient of $n^{-s}$ in the Dirichlet series expansion of $\LL(s)$.


To see how products of the form $\LL(s)$ arise naturally in considering Conjecture~\ref{conj: hurwitzmoments} for $\alpha = a/q$, we observe that for $\Re{s} > 1$, the orthogonality of Dirichlet characters gives
\begin{equation*} \begin{split} \zeta(s,\alpha) &{} =  \frac{q^s}{\varphi(q)}\sum_\chi \ov{\chi}(a) L(s,\chi). \end{split}\end{equation*}
By analytic continuation, this equality holds everywhere. Thus, by the multinomial theorem
\begin{equation}  \begin{split} \label{eqn: hurwitzdecomp}
|\zeta(s,\alpha)|^{2k} &{} = \bigg|\frac{q^{ks}}{\varphi(q)^k} \sum_{|\vg{\ell}|=k} \binom{k}{\vg{\ell}} \prod_{\chi} \bigg\{\ov{\chi}(a) L(s,\chi)\bigg\}^{\ell_\chi}\bigg|^2 \\ &{} = \frac{q^{2k\sigma}}{\varphi(q)^{2k}} \sum_{\substack{|\vg{\ell}^{(1)}| = k \\|\vg{\ell}^{(2)}|=k}}\binom{k}{\vg{\ell}^{(1)}} \binom{k}{\vg{\ell}^{(2)}} s(a;\vg{\ell}^{(1)},\vg{\ell}^{(2)})\LLa(s)\ov{\LLb(s)},\end{split} \end{equation} 
where $\binom{k}{\vg{\ell}}=k!/\prod_\chi \ell_\chi!$ are multinomial coefficients, the sums runs over $\vg{\ell}$ such that $|\vg{\ell}| = \sum_{\chi} \ell_{\chi} = k$, and $s(a;\vg{\ell}^{(1)},\vg{\ell}^{(2)}) = \prod_{\chi} \chi(a)^{\ell^{(2)}_{\chi} - \ell^{(1)}_{\chi}}$.
In particular, when we integrate both sides from $1/2 + iT$ to $1/2 + i2T$, the terms of this sum whose phase oscillates will probably not contribute to the main term. The terms that do not have oscillations correspond to the diagonal terms $\vg{\ell}^{(1)} = \vg{\ell}^{(2)}$ where the phases of each term in the product cancel out, yielding a positive real number. Thus, heuristically, 
\begin{equation} \label{eqn: heuristicanswer}\begin{split} M_k(T;\alpha) 
	&{} = \int_T^{2T} \left|\zeta\left(\tfrac{1}{2}+it,\alpha\right)\right|^{2k}\,dt \\ 
	&{} \approx \frac{q^k}{\varphi(q)^{2k}} \sum_{|\vg{\ell}| = k} \binom{k}{\vg{\ell}}^2 \int_T^{2T} \left|\LL\left(\tfrac{1}{2}+it\right)\right|^2 \,dt. \end{split} \end{equation}
whence, the problem of estimating $M_k(T;\alpha)$ naturally reduces to studying the mean square of $\LL(s)$ along the critical line. 

To study such moments, we will use a hybrid Euler-Hadamard product, a tool introduced originally by Gonek, Hughes and Keating \cite{gonek2007hybrid} in the context of the Riemann zeta function. Specifically, we will need the following version for Dirichlet $L$-functions in the $t$-aspect:

\begin{theorem} \label{thm: eulerhadamard}
Let $s = \sigma + it$ with $\sigma \geqs 0$ and $|t| \geqs 2$, let $X \geqs 2$ be a real parameter, and let $K$ be any fixed positive integer. Further, let $f(x)$ be a non-negative $C^\infty$-function of mass one supported on $[0,1]$, and set $u(x) = Xf(X\log(x/e) + 1)/x$ so that $u$ is a non-negative $C^\infty$-function of mass one supported on $[e^{1 - 1/X},e]$. Set
\begin{equation*} U(z) = \int_0^\infty u(x) E_1(z\log x)\,dx, \end{equation*}
where $E_1(z) = \int_z^\infty e^{-w}w^{-1} \,dw$ is the exponential integral.

Let $q$ be a fixed positive integer, and $\chi$ be a Dirichlet character modulo $q$ with conductor $\qchi$. Further, suppose that $\chi$ is induced by the primitive character $\chi^*$ modulo $\qchi$. Then,
\begin{equation*} L(s,\chi) = P_X(s,\chi) Z_X(s,\chi) \Bigg(1 + \bigO\Big(\frac{\log X}{X^\sigma}\Big) + \bigO_{K,f}\Big(\frac{X^{K+2}}{(|s|\log X)^K}\Big)\Bigg), \end{equation*}
where 
\begin{equation*} P_X(s,\chi) = \Bigg\{\prod_{p\mid q}\bigg(1 - \frac{\chi^*(p)}{p^s}\bigg)\Bigg\}\exp\bigg(\sum_{n\leqs X} \frac{\chi^*(n)\Lambda(n)}{n^s\log n}\bigg), \end{equation*}
and
\begin{equation*} Z_X(s,\chi) = \exp\Bigg(- \sum_{\substack{\rho \\0 \leqs \Re\rho \leqs 1 \\ L(\rho,\chi^*) = 0}} U((s_0 - \rho)\log X)\Bigg). \end{equation*}

The implied constants are uniform in all parameters including $q$, unless indicated otherwise. 

\end{theorem}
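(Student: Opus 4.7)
\emph{Proof plan.} I would follow the strategy of Gonek, Hughes, and Keating \cite{gonek2007hybrid}, adapted to Dirichlet $L$-functions in the $t$-aspect and with explicit tracking of the conductor $q$. Writing $L(s,\chi) = L(s,\chi^*)\prod_{p\mid q}(1 - \chi^*(p)/p^s)$, the finite Euler product at primes dividing $q$ is absorbed directly into $P_X(s,\chi)$; it therefore suffices to establish the hybrid identity for $L(s,\chi^*)$. Taking logarithms, the goal becomes
\begin{equation*}
\log L(s,\chi^*) = \sum_{n\leqs X}\frac{\chi^*(n)\Lambda(n)}{n^s\log n} - \sum_\rho U\big((s-\rho)\log X\big) + O\Big(\frac{\log X}{X^\sigma}\Big) + O_{K,f}\Big(\frac{X^{K+2}}{(|s|\log X)^K}\Big),
\end{equation*}
which exponentiates to the theorem.

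The main vehicle will be a smoothed Mellin--Barnes contour integral involving $-L'/L(\,\cdot\,,\chi^*)$ and a kernel built from the Mellin transform $\hat u$ of $u$. Since $u$ is $C^\infty$ and compactly supported in $[e^{1-1/X}, e]$, $\hat u$ is entire and, via repeated integration by parts, satisfies $|\hat u(\xi)| \ll_N e^{\mathrm{Re}(\xi)}(X/|\xi|)^N$ on vertical lines. Starting from a contour far to the right in $w$, so that $L'/L(s+w/\log X,\chi^*)$ may be opened as a Dirichlet series, the integral evaluates to a smoothly weighted prime sum $\sum_n \chi^*(n)\Lambda(n)\,\tilde u(n)/(n^s\log n)$. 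By the compact support of $u$ and Chebyshev's bound, this agrees with the hard-truncated sum $\sum_{n\leqs X}$ up to an error $O(\log X/X^\sigma)$, accounting for the first error term.

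Shifting the contour leftward past the non-trivial zeros $\rho$ of $L(s+w/\log X,\chi^*)$ --- and, if $\chi^*$ is principal, past the pole of $L$ at $s = 1$ --- the residues assemble into the sum $-\sum_\rho U((s-\rho)\log X)$; the exponential integral $E_1(z) = \int_z^\infty e^{-w}/w\,dw$ is precisely what emerges upon evaluating the relevant Mellin kernel at $w = (\rho - s)\log X$, recovering the function $U$ in the theorem. Contributions from trivial zeros and from the principal-character pole are straightforwardly bounded and are either absorbed into the admissible error or into the $\prod_{p \mid q}$ factor of $P_X$. The hard part will be bounding the residual integral along the shifted contour $\mathrm{Re}(w) = -A$ with the sharp error $O_{K,f}(X^{K+2}/(|s|\log X)^K)$, uniformly in $q$: integrating by parts $K$ times in $w$ uses the $C^\infty$-smoothness of $f$ to extract decay $(|w|/X)^K$ from $\hat u$, and combining with classical polynomial-in-$|s|$ growth of $L'/L(\,\cdot\,,\chi^*)$ on this line --- in its form uniform in $\qchi$ --- produces the claimed error. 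The crux is ensuring that all zero-counting and convexity estimates used carry no hidden conductor dependence.
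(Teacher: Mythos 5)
Your proposal matches the paper's own proof in both structure and substance: the paper likewise factors out $\prod_{p\mid q}(1-\chi^*(p)/p^s)$ to reduce to the primitive case via the identities $L(s,\chi)=L(s,\chi^*)\prod_{p\mid q}(1-\chi^*(p)/p^s)$, $P_X(s,\chi)=P_X(s,\chi^*)\prod_{p\mid q}(1-\chi^*(p)/p^s)$, and $Z_X(s,\chi)=Z_X(s,\chi^*)$, and then invokes \cite[Theorem~1]{gonek2007hybrid} and \cite[Theorem~1]{bui2007mean} \emph{mutatis mutandis}. Your sketch of the Mellin--Barnes contour argument fills in the detail that the paper delegates to those references, but the decomposition, the role of $\hat u$ and $E_1$, and the source of the two error terms are all the same.
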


Such a hybrid Euler-Hadamard product was proved by Bui and Keating \cite{bui2007mean} in their study of moments in the $q$-aspect of Dirichlet $L$-functions at the central point $s = 1/2$ (see \cite[Remark 1]{bui2007mean}). Similar hybrid Euler-Hadamard products have been used in the literature for studying moments in many other contexts such as for  for orthogonal and symplectic families of $L$-functions \cite{bui2008families}; for $\zeta'(s)$ \cite{bui2015hybrid}; for the Dedekind zeta function $\zeta_{\K}(s)$ of a Galois extension $\K$ of $\Q$ \cite{heap2013hybriddedekind}; for quadratic Dirichlet $L$-functions over function fields \cite{bui2018hybridquadratic}, \cite{andrade2018truncated}; for normalized symmetric square $L$-functions associated with $SL_2(\Z)$ eigenforms \cite{djankovic2013symmetric}; and for quadratic 
Dirichlet $L$-functions over function fields associated to irreducible polynomials \cite{andrade2019hybrid}.

With $P(s,\chi)$ and $Z(s,\chi)$ as in Theorem~\ref{thm: eulerhadamard}, we define 
\begin{equation*} \PP(s) = \prod_\chi P_X(s,\chi)^{\ell_\chi},\,\ZZ(s) = \prod_\chi Z_X(s,\chi)^{\ell_\chi}. \end{equation*}
We can view $\LL(s)$ as an $L$-function of degree $|\vg{\ell}|$, $\PP(s)$ as an approximation to its Euler product, and $\ZZ(s)$ as an approximation to its Hadamard product. Roughly, Theorem~\ref{thm: eulerhadamard} implies that $\LL(s) \approx \PP(s) \ZZ(s)$.

As is usually the case with hybrid Euler-Hadamard products, $X$ mediates between the primes and zeroes; if we want to take fewer primes in the Euler product we must take more zeroes in the Hadamard product and vice-versa.

For $X$ growing relatively slowly with $T$, we expect the two terms in the decomposition $\LL(s) \approx \PP(s) \ZZ(s)$ to behave like independent random variables due to a separation of scales. This is analogous to the splitting conjecture of Gonek, Hughes and Keating \cite[Conjecture~2]{gonek2007hybrid}. Concretely, we have:

\begin{conjecture}[Splitting] \label{conj: splitting}
Let $X, T \to \infty$ with $X \ll_\epsilon (\log T)^{2-\epsilon}$. Then, for any tuple of nonnegative integers $\vg{\ell}$ indexed by characters modulo $q$, we have for $s = 1/2 + it$,
\begin{equation*} \frac{1}{T}\int_T^{2T}\left|\LL(s)\right|^2\,dt \sim \left(\frac{1}{T}\int_T^{2T}\left|\PP(s)\right|^2\,dt\right) \times \left(\frac{1}{T}\int_T^{2T} \left|\ZZ(s)\right|^2\,dt \right). \end{equation*}
\end{conjecture}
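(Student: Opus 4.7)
The plan is to adapt the strategy used by Gonek, Hughes and Keating \cite{gonek2007hybrid} in support of the analogous splitting conjecture for $\zeta(s)$. The underlying heuristic is that $\PP(s)$ and $\ZZ(s)$ live on well-separated scales in the $t$-variable: $\PP(s)$ varies smoothly on scales of order $1/\log X$, since its exponent is a Dirichlet polynomial supported on integers $n \leqs X$, while $\ZZ(s)$ fluctuates on scales of order $1/\log T$, the typical spacing between zeros of the $L(s,\chi)$ at height $T$. The hypothesis $X \ll_\epsilon (\log T)^{2-\epsilon}$ ensures these two scales are distinct, so one expects $|\PP(\tfrac12+it)|^2$ and $|\ZZ(\tfrac12+it)|^2$ to behave like independent random variables when $t$ is sampled uniformly from $[T,2T]$.

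Concretely, I would first apply Theorem~\ref{thm: eulerhadamard} to each $L(s,\chi)$ and multiply to obtain $\LL(s) = \PP(s)\ZZ(s)(1+E(s))$ with pointwise control on $E(s)$. Squaring, integrating over $[T,2T]$, and using Cauchy--Schwarz together with moment bounds on $\PP$, $\ZZ$, and $\LL$ should reduce matters to proving
\begin{equation*} \int_T^{2T} |\PP(s)|^2 |\ZZ(s)|^2 \,dt \sim \frac{1}{T}\left(\int_T^{2T} |\PP(s)|^2\,dt\right)\left(\int_T^{2T} |\ZZ(s)|^2\,dt\right). \end{equation*}
For this I would expand $|\PP(s)|^2$ as a Dirichlet polynomial
\begin{equation*} |\PP(s)|^2 = \sum_{m,n} \frac{a(m)\ov{a(n)}}{(mn)^{1/2}} \left(\frac{m}{n}\right)^{-it}, \end{equation*}
supported on integers $m,n$ with all prime factors at most $X$, the coefficients $a(\cdot)$ coming from expanding the exponential in $P_X(s,\chi)^{\ell_\chi}$. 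The diagonal $m=n$ contributes a term that matches the mean square of $\PP$, and the splitting would follow if one can show that for each off-diagonal pair $m \neq n$, the twisted integral $\int_T^{2T} (m/n)^{-it} |\ZZ(s)|^2\,dt$ is of smaller order than the diagonal.

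This last step is the principal obstacle. Extracting cancellation in such twisted integrals requires understanding the joint distribution of zeros of $L(s,\chi)$ across $\chi \in \mathcal{D}(q)$ at height $T$, at least at the level of pair correlations. Even for $q=1$ the corresponding splitting conjecture is open, and our setting requires strictly more, namely an input on joint correlations across distinct Dirichlet characters of a fixed modulus in the $t$-aspect. A fully unconditional proof therefore appears out of reach with current technology; the realistic goal is a proof conditional on $\GRH$ for each $L(s,\chi)$ together with an appropriate pair/joint correlation hypothesis, and one should further verify as a consistency check that the resulting factorization matches the Keating--Snaith prediction for the $2|\vg{\ell}|$-th moment of $\LL(s)$.
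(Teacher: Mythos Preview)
The statement you are asked to prove is a \emph{conjecture}, and the paper does not prove it. It is stated as Conjecture~\ref{conj: splitting} and left open in general; the paper only verifies the case $|\vg{\ell}|=1$ (Theorem~\ref{thm: smallk}), remarks that $|\vg{\ell}|=2$ should be accessible by the methods of \cite{gonek2007hybrid}, \cite{heap2013hybriddedekind}, \cite{heap2014twisted} given a suitable twisted fourth moment, and otherwise treats the conjecture as a hypothesis (the $\Sp$ condition) in later results.

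Your proposal correctly recognises this: you give the standard separation-of-scales heuristic (precisely the justification the paper invokes, following Gonek--Hughes--Keating), identify the genuine obstruction as controlling the off-diagonal twisted integrals $\int_T^{2T} (m/n)^{-it}|\ZZ(s)|^2\,dt$, and acknowledge that this is out of reach even for $q=1$. That is an accurate assessment; there is no gap to point out because you do not claim to have a proof. The only minor addition worth noting is that the paper's partial verification for $|\vg{\ell}|=1$ proceeds not by attacking the twisted $\ZZ$-integrals directly, but by computing $\int_T^{2T}|L(s,\chi)Q_X(s,\chi)|^2\,dt$ via a twisted second moment (Lemma~\ref{lem: twisted2ndmom}) and comparing with the known asymptotics for the $\LL$ and $\PP$ mean squares---a route that sidesteps the zero-correlation issue entirely in the accessible cases.
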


On \cite[p. 511]{gonek2007hybrid}, it is suggested that this splitting conjecture holds for a much wider range of $X$ and $T$ with $X = o(T)$. Recently, Heap \cite{heap2021splitting} has justified this suggestion. He proved on RH that the splitting conjecture for $\zeta(s)$ holds for every $k>0$ and a much wider range of $X$ provided one requires only an order of magnitude result, instead of an asymptotic. He also established the splitting conjecture for $k = 1$ and $k = 2$ for wider ranges of $X$ both with and without RH. 

The mean square of $\PP(s)$ can be computed exactly. 
\begin{theorem}

\label{thm: eulerproductmoment}

Let $k \geqs 0$ be a fixed integer and $\epsilon > 0$ be fixed. Let $\vg{\ell}$ be a tuple of nonnegative integers indexed by characters modulo $q$ such that $|\vg{\ell}| = \sum_\chi \ell_\chi = k$. Finally, suppose that $q^2 < X \ll_\epsilon (\log T)^{2 - \epsilon)}$. Then for $s = 1/2+it$,
\begin{equation*}\frac{1}{T}\int_T^{2T} |\PP(s)|^2\,dt = b(\vg{\ell}) F_X(\vg{\ell}) \left(1+ \bigO_{q,k,\epsilon}\left(\frac{1}{\log X}\right)\right) \end{equation*}
where $b(\vg{\ell})$ and $F_X(\vg{\ell})$ are given by 
\begin{equation} b(\vg{\ell}) = \prod_{p\nmid q} \left\{\left(1 - \frac{1}{p}\right)^{|\dl(p)|^2}\sum_{m=0}^\infty \frac{|\dl(p^m)|^2}{p^{m}}\right\}, \label{def:b}\end{equation} 
\begin{equation} F_X(\vg{\ell}) = (e^\gamma\log X)^{\lambda} \prod_p \left(1 - \frac{1}{p}\right)^{\lambda - |\dl(p)|^2}, \label{def:F}\end{equation}
where $\gamma$ is the Euler-Mascheroni constant, $\dl(n)$ is the coefficient of $n^{-s}$ in the Dirichlet series for $\LL(s)$, and $\lambda = \sum_\chi \ell_\chi^2$. 
\end{theorem}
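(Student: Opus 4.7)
The plan is to express $\PP(s)$ as an absolutely convergent Dirichlet series, apply a Montgomery--Vaughan-type mean value theorem to reduce the problem to a diagonal arithmetic sum, and then evaluate the resulting Euler product using Mertens' theorem. Setting $c_m(p) = \sum_\chi \ell_\chi \chi^*(p)^m$, the definition of $P_X(s,\chi)$ from Theorem~\ref{thm: eulerhadamard} yields
\begin{equation*}
\PP(s) = C_q(s)\exp\left(\sum_{p^m \leqs X}\frac{c_m(p)}{mp^{ms}}\right),
\end{equation*}
where $C_q(s) = \prod_{p\mid q}\prod_\chi(1-\chi^*(p)/p^s)^{\ell_\chi}$ is bounded in terms of $q$ on the critical line. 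Since the exponent is a finite sum of Dirichlet monomials, the exponential expands as an absolutely convergent Dirichlet series $\PP(s) = \sum_n \beta(n)/n^s$ supported on $X$-smooth integers with $\beta$ multiplicative. The key algebraic observation is that for $p \nmid q$ and $m \leqs M_p := \floor{\log X/\log p}$, the local Taylor coefficient satisfies $\beta(p^m) = \dl(p^m)$, because matching the truncated and full polynomials $\sum_{j \leqs M_p}c_j(p)z^j/j$ and $\sum_{j \geqs 1}c_j(p)z^j/j$ up to degree $M_p$ forces their exponentials to match up to the same degree, and the latter exponential equals $\prod_\chi(1-\chi^*(p)z)^{-\ell_\chi}$, the Euler factor of $\LL(s)$ at $p$.

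I would then truncate this Dirichlet series at length $N = X^K$ for a large fixed $K$. Standard tail bounds for multiplicative functions supported on smooth numbers show that $\sum_{n > N}|\beta(n)|^2/n$ decays faster than any negative power of $\log X$. Applying the Montgomery--Vaughan mean value theorem to the truncated polynomial yields
\begin{equation*}
\frac{1}{T}\int_T^{2T}\left|\sum_{n \leqs N}\frac{\beta(n)}{n^{1/2+it}}\right|^2 dt = \sum_{n \leqs N}\frac{|\beta(n)|^2}{n} + O\!\left(\frac{1}{T}\sum_{n \leqs N}|\beta(n)|^2\right),
\end{equation*}
and the hypothesis $X \ll_\epsilon (\log T)^{2-\epsilon}$ forces $N = o(T)$, so the off-diagonal error is negligible. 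The problem thus reduces to evaluating $\sum_n |\beta(n)|^2/n$, which by multiplicativity factors as an Euler product $\prod_p E_p(X)$. For $p > X$ we have $E_p(X) = 1$; for $p \leqs X$ with $p \nmid q$, the identification $\beta(p^m) = \dl(p^m)$ for $m \leqs M_p$ combined with rapid decay of Taylor coefficients for $m > M_p$ gives $E_p(X) = \sum_{m\geqs 0}|\dl(p^m)|^2/p^m$ up to an exponentially small per-prime error; primes $p \mid q$ contribute a bounded $q$-dependent factor.

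The $(e^\gamma\log X)^\lambda$ factor is then extracted via the algebraic identity $E_p(X) = (1-1/p)^{-\lambda}\cdot[(1-1/p)^\lambda E_p(X)]$ and Mertens' theorem $\prod_{p\leqs X}(1-1/p)^{-\lambda} \sim (e^\gamma \log X)^\lambda$; the residual product $\prod_{p\leqs X}(1-1/p)^\lambda E_p(X)$ completes to a convergent product over all primes equal to $b(\vg{\ell})\prod_p(1-1/p)^{\lambda-|\dl(p)|^2}$, with the tail $\prod_{p > X}$ contributing $1 + O(1/\log X)$. This last $O(1/\log X)$ is expected to be the dominant error and arises from the rate of convergence of $\sum_{p \leqs X}(|\dl(p)|^2 - \lambda)/p$, which one controls using orthogonality of characters modulo $q$ applied to $|\dl(p)|^2 = \sum_{\chi,\nu}\ell_\chi\ell_\nu\chi(p)\overline{\nu(p)}$, together with a Siegel--Walfisz-type bound on $\sum_{p\leqs X}\chi(p)\overline{\nu(p)}/p$ for non-principal $\chi\overline\nu$; the hypothesis $X > q^2$ enters precisely here to make these character-sum estimates effective uniformly in $q$. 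The main technical work is therefore the Mertens bookkeeping in Phase 3, verifying that all smaller errors (Dirichlet tail, off-diagonal Montgomery--Vaughan, deviations of $\beta(p^m)$ from $\dl(p^m)$ when $m > M_p$, and the tail $p > X$) are all uniformly absorbed into the stated $O(1/\log X)$ bound.
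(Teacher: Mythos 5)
Your overall plan — expand $\PP$ as a Dirichlet series, truncate, apply a Montgomery--Vaughan mean value theorem, reduce to an arithmetic Euler product, and extract the $(e^\gamma\log X)^\lambda$ via Mertens in arithmetic progressions — is the same shape as the paper's proof, and your observation that $\beta(p^m)=\dl(p^m)$ for all $m$ with $p^m\leqs X$ (not just $m=1$ and $p\leqs \sqrt X$ as the paper uses via Bui--Keating's lemma) is correct and slightly sharper. However, there is a genuine gap in the truncation step that cannot be repaired as written.

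You truncate the Dirichlet series at $N=X^K$ for a large fixed $K$. That is too short. The coefficient $\beta$ is supported on the entire (infinite) set of $X$-smooth integers coprime to $q$, and the mass of $\sum_n|\beta(n)|^2/n$ is concentrated near $n\approx X^\lambda$: indeed, under the probability measure proportional to $|\beta(n)|^2/n$, the quantity $\log n$ has mean $\approx\sum_{p\leqs X}|\dl(p)|^2\log p/p\approx\lambda\log X$ and variance $\asymp(\log X)^2$. Consequently $\sum_{n>X^K}|\beta(n)|^2/n$ is not $o\bigl((\log X)^{-A}\bigr)$ for fixed $K$; it is asymptotically a fixed positive proportion of the full sum (depending on $K$ but not tending to $0$ as $X\to\infty$). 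The claim that the tail ``decays faster than any negative power of $\log X$'' is therefore false. Worse, even if that coefficient-tail estimate held, you would still need to relate $\int_T^{2T}|\PP|^2$ to the mean square of the truncated polynomial, which requires controlling the Dirichlet-series tail itself, i.e.\ $\bigl|\sum_{n>X^K,\,n\in\Sm(X)}\beta(n)n^{-1/2-it}\bigr|$ either pointwise or in $L^2$. The crude bound $\sum_{n>X^K,\,n\in\Sm(X)}d_{2k}(n)n^{-1/2}$ is of order $\exp\bigl(cX^{1/2}/\log X\bigr)$ (it captures nearly all of $\prod_{p\leqs X}(1-p^{-1/2})^{-2k}$), so it does not become small by cutting off at $X^K$. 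Montgomery--Vaughan applied directly to the infinite tail also fails because the error term $\sum_{n>X^K}|\beta(n)|^2$ diverges. The paper resolves exactly this problem by truncating at $T^\theta$ rather than a power of $X$: only at that much larger length does Rankin's trick, combined with the hypothesis $X\ll_\epsilon(\log T)^{2-\epsilon}$, give a pointwise tail bound $O_{k,\epsilon,\theta}(T^{-\epsilon\theta/2})$, and simultaneously the off-diagonal MV error $O(T^\theta\log T)$ remains $o(T)$ for $\theta<1$. Your proof should be revised to truncate at a small power of $T$.

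A secondary point: you attribute the hypothesis $X>q^2$ to the need for a Siegel--Walfisz type estimate to make the character-sum error uniform in $q$. That is not how it is used (nor would $q<X^{1/2}$ suffice for Siegel--Walfisz), and in any case the stated error $O_{q,k,\epsilon}(1/\log X)$ is allowed to depend on $q$, so uniformity is not required. The hypothesis $X>q^2$ is there to guarantee that every prime $p\mid q$ satisfies $p\leqs\sqrt X$, which is what allows the short Euler product $\PS$ (in the paper's Lemma on $P^*_X$) to be compared cleanly with $\PP$ prime-by-prime; the Mertens-in-APs step uses only an error term $O_{q,\kappa}(1/\log X)$ for fixed $q$.
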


One could prove a similar result uniformly in $c$ on any vertical line $\Re{s}=\sigma$ with $1>\sigma\geqs c \geqs 1/2$ given $X \ll_\epsilon (\log T)^{1/(1-c+\epsilon)}$ , but we choose not to do so for conciseness. Note that the product over $p$ in (\ref{def:F}) is conditionally convergent but not absolutely convergent. 

For the mean square of $\ZZ(s)$, we use random matrix theory to model each $L$-function appearing in the product by random unitary matrices. One expects that the matrices representing distinct $L$-functions behave independently as in \cite[Conjecture~2]{heap2013hybriddedekind}. This leads to:

\begin{conjecture} \label{conj: randommatrices}
Suppose that $X, T \to \infty$ with $X \ll_\epsilon (\log T)^{2-\epsilon}$. Then, for any tuple $\vg{\ell}$ of nonnegative integers indexed by characters modulo $q$, we have for $s = 1/2 + it$,
\begin{equation*} \frac{1}{T}\int_T^{2T} |\ZZ(s)|^{2}\,dt \sim \prod_\chi \Bigg[\frac{G(\ell_\chi+1)^2}{G(2\ell_\chi +1)} \left(\frac{\log \qchi T}{e^\gamma \log X}\right)^{\ell_\chi^2}\Bigg], \end{equation*}
where $G(\cdot)$ is the Barnes G-function, and $\qchi$ is the conductor of $\chi$.
\end{conjecture}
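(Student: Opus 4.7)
The plan is to justify Conjecture~\ref{conj: randommatrices} via a random matrix theory heuristic, adapting the Keating--Snaith framework \cite{keatingsnaithzeta} through the hybrid Euler--Hadamard lens of Gonek--Hughes--Keating \cite{gonek2007hybrid}, and invoking independence across characters in the spirit of Heap's treatment of the Dedekind zeta function \cite{heap2013hybriddedekind}.

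First, I would model the nontrivial zeros of each primitive $L$-function $L(s,\chi^*)$ in the vicinity of height $T$ by rescaled eigenvalue arguments of a random $N_\chi \times N_\chi$ unitary matrix $A_\chi$ drawn from Haar measure on $U(N_\chi)$, where $N_\chi$ is chosen to match the local zero density, namely $N_\chi \approx \log(\qchi T/(2\pi))$. Crucially, for distinct primitive characters $\chi^* \neq \nu^*$, I would postulate that the matrices $A_\chi$ and $A_\nu$ are statistically independent; this reflects the general belief that zeros of distinct primitive Dirichlet $L$-functions are uncorrelated and is the direct analogue in the Hurwitz setting of the independence hypothesis invoked in \cite{heap2013hybriddedekind}. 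Since every character modulo $q$ is induced by a unique primitive character of conductor dividing $q$, the product over $\chi$ corresponds to a product over genuinely distinct random matrices, with no repeated factors.

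Second, I would interpret $Z_X(s,\chi)$ in random matrix language. Because the kernel $U(z)$ decays rapidly as $|z|$ moves away from zero, the sum $\sum_\rho U((s_0-\rho)\log X)$ is effectively supported on the $\asymp \log X$ zeros of $L(s,\chi^*)$ nearest to $s$, so $Z_X(s,\chi)$ may be modeled by the characteristic polynomial $P_{A_\chi}$ evaluated at a corresponding point on the unit circle, smoothed at scale $1/\log X$. Carrying out the Keating--Snaith computation as in \cite[\S3]{gonek2007hybrid}, but with $\log T$ replaced by $\log(\qchi T)$ to reflect the larger density of zeros of $L(s,\chi^*)$, yields the single-character asymptotic
\begin{equation*} \frac{1}{T}\int_T^{2T} |Z_X(s,\chi)|^{2\ell_\chi}\,dt \sim \frac{G(\ell_\chi+1)^2}{G(2\ell_\chi+1)}\left(\frac{\log(\qchi T)}{e^\gamma \log X}\right)^{\ell_\chi^2}. \end{equation*}
Here the Barnes $G$-factor arises from the Keating--Snaith moment formula for $\mathbb{E}_{U(N)}|P_A(1)|^{2\ell_\chi}$, while the denominator $e^\gamma \log X$ records the mollification by $U$ and reflects the Mertens constant $e^\gamma$.

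Finally, I would take the product over $\chi$ and use the postulated independence of the matrices $A_\chi$ to factor the expectation:
\begin{equation*} \mathbb{E}\Bigg[\prod_\chi |Z_X(s,\chi)|^{2\ell_\chi}\Bigg] = \prod_\chi \mathbb{E}\big[|Z_X(s,\chi)|^{2\ell_\chi}\big], \end{equation*}
which recovers the conjectured product form. The principal obstacle—and the reason the statement remains a conjecture—is the random matrix model itself, in particular the hypothesis that the matrices associated to distinct primitive characters modulo $q$ are statistically independent; this is well beyond current technology. A secondary but tractable technical point is confirming that zeros far from $s=1/2+it$ contribute negligibly to $Z_X$, so that the local random matrix picture is internally consistent; this follows from the decay properties of $U$ together with standard horizontal estimates on zeros of Dirichlet $L$-functions.
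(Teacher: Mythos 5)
Your heuristic follows the same route as Section~4 of the paper: model each $Z_X(s,\chi)$ by a Haar-random unitary matrix of size $N_\chi \approx \log(\qchi T/2\pi)$, invoke independence across (primitive) characters in the spirit of Heap's Dedekind zeta treatment, factor the expectation, and apply the Keating--Snaith/Gonek--Hughes--Keating moment computation to each factor to produce the Barnes $G$-function and the $(\log(\qchi T)/(e^\gamma\log X))^{\ell_\chi^2}$ factors. The only notable difference is that the paper carries out the intermediate step more explicitly (assuming GRH, writing $\Re E_1(ix) = -\Ci(|x|)$, and expressing the integrand via the function $\phi(m,\theta)$ before applying \cite[Theorem~4]{gonek2007hybrid}), while you sketch this step at a higher level; the substance and the conclusion are the same.
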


It is clear that one can use Conjectures~\ref{conj: splitting}~and~\ref{conj: randommatrices} together with Theorem~\ref{thm: eulerproductmoment} to get a conjectural asymptotic for $\int_T^{2T}|\LL(1/2+it)|^2\,dt$. Precisely, we get,

\begin{theorem} \label{thm: L-asymptotic}
If Conjecture~\ref{conj: splitting} and Conjecture~\ref{conj: randommatrices} are true for a tuple of nonnegative integers $\vg{\ell}$ indexed by characters modulo $q$ satisfying $|\vg{\ell}| = k$, then we have for $s = 1/2 + it$,
\begin{equation*} \frac{1}{T}\int_T^{2T} |\LL(s)|^2\, dt = (\clq+o_{q,k}(1)) \bigg\{\prod_\chi\left(\log \qchi T\right)^{\ell_\chi^2}\bigg\}, \end{equation*}
where $\clq$ is given by
\begin{equation*} \prod_{p} \Bigg\{\left(1-\frac{1}{p}\right)^{\lambda}\sum_{m=0}^\infty \frac{|\dl(p^m)|^2}{p^m}\Bigg\} \prod_\chi \frac{G(\ell_\chi + 1)^2}{G(2\ell_\chi+1)}. \end{equation*}
Here $\lambda$, and $G(\cdot)$ and $\qchi$ are the same as above.

\end{theorem}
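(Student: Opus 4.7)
The proof is essentially a synthesis of three ingredients: the splitting conjecture (Conjecture~\ref{conj: splitting}), the mean square of the Euler product piece (Theorem~\ref{thm: eulerproductmoment}), and the random matrix prediction for the zero piece (Conjecture~\ref{conj: randommatrices}). The plan is to choose any admissible parameter $X$, say $X = (\log T)^{3/2}$, so that the hypotheses $q^2 < X \ll_\epsilon (\log T)^{2-\epsilon}$ of all three results hold simultaneously for $T$ sufficiently large depending on $q$. Since the left-hand side does not depend on $X$, it suffices to verify that the $X$-dependence in the combined asymptotic cancels.

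First, apply Conjecture~\ref{conj: splitting} to write
\begin{equation*}
\frac{1}{T}\int_T^{2T}\left|\LL(s)\right|^2\,dt \sim \left(\frac{1}{T}\int_T^{2T}\left|\PP(s)\right|^2\,dt\right)\left(\frac{1}{T}\int_T^{2T}\left|\ZZ(s)\right|^2\,dt\right).
\end{equation*}
Next, insert the exact evaluation $\frac{1}{T}\int_T^{2T}|\PP(s)|^2\,dt = b(\vg{\ell})F_X(\vg{\ell})(1 + O_{q,k,\epsilon}(1/\log X))$ from Theorem~\ref{thm: eulerproductmoment} and the random matrix prediction from Conjecture~\ref{conj: randommatrices}. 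The product of the two asymptotics contains the factor $F_X(\vg{\ell}) \cdot \prod_\chi (e^\gamma \log X)^{-\ell_\chi^2}$, and since $\lambda = \sum_\chi \ell_\chi^2$, the powers of $e^\gamma \log X$ cancel exactly, leaving the $X$-independent quantity $\prod_p (1-1/p)^{\lambda - |\dl(p)|^2}$.

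The remaining step is the algebraic consolidation of the Euler product. Observe that for $p \mid q$ every Dirichlet character modulo $q$ vanishes at $p$, hence $\dl(p^m) = 0$ for all $m \geqs 1$ and the local sum $\sum_{m=0}^\infty |\dl(p^m)|^2/p^m$ equals $1$. Thus, combining the $p \nmid q$ product from $b(\vg{\ell})$ with the full product over $p$ coming from $F_X(\vg{\ell})$, one can telescope to obtain
\begin{equation*}
b(\vg{\ell}) \prod_{p}\left(1-\frac{1}{p}\right)^{\lambda - |\dl(p)|^2} = \prod_{p}\left\{\left(1-\frac{1}{p}\right)^{\lambda}\sum_{m=0}^{\infty}\frac{|\dl(p^m)|^2}{p^m}\right\}.
\end{equation*}
Multiplying this by the Barnes $G$-function factors from Conjecture~\ref{conj: randommatrices} gives precisely the constant $\clq$, while the remaining size terms from the random matrix conjecture combine to $\prod_\chi (\log \qchi T)^{\ell_\chi^2}$. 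All other errors — the $O(1/\log X)$ from Theorem~\ref{thm: eulerproductmoment} and the implicit $o(1)$ factors from the two conjectures — are absorbed into the $o_{q,k}(1)$ in the final statement.

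There is no serious obstacle here: the entire argument is a bookkeeping exercise once one accepts the three inputs. The only place that requires mild care is tracking the difference between products over $p$ and products over $p \nmid q$, together with confirming the cancellation of $(e^\gamma \log X)^\lambda$ between the two factors; both are straightforward consequences of the definitions of $b(\vg{\ell})$, $F_X(\vg{\ell})$ and $\lambda$.
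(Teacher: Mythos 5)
Your proposal is correct and follows essentially the same route as the paper: apply the splitting conjecture, insert Theorem~\ref{thm: eulerproductmoment} and Conjecture~\ref{conj: randommatrices}, cancel the $(e^\gamma\log X)^\lambda$ factors (using $\lambda = \sum_\chi \ell_\chi^2$), and combine $b(\vg{\ell})F_X(\vg{\ell})$ into a single Euler product. Your explicit check that $\dl(p^m)=0$ for $p\mid q$ and $m\geqs 1$ (so the local sum is $1$) makes the consolidation of the $p\mid q$ and $p\nmid q$ factors slightly more transparent than the paper's presentation, but the argument is the same.
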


Note that for a fixed $q$, the above says that the mean square of a product of Dirichlet $L$-functions grows as $\asymp_{k,q} T(\log T)^\lambda$. This is known for $|\vg{\ell}| \leqs 2$, and we shall show that in these cases our predicted constant matches up.

Due to the conditional hypotheses, the above theorem is really a conjecture. We note here that Heap made a similar conjecture about moments of products of $L$-functions from the Selberg class (see \cite[Section 6]{heap2013hybriddedekind}) using the recipe of Conrey, Farmer, Keating, Rubinstein and Snaith \cite{conreymomentsrecipe}. Specializing to Dirichlet $L$-functions, one can recover the above conjecture. 

He also discussed how such conjectures could be reproduced by using hybrid Euler-Hadamard products under appropriate hypotheses. However, since he has not worked out the details of this approach in this specific context, we do so here for completeness.

It is evident that our previous discussion about (\ref{eqn: heuristicanswer}) and Theorem~\ref{thm: L-asymptotic} can together be used to compute the correct value of $c_k(\alpha)$ in Conjecture~\ref{conj: hurwitzmoments}. 

\begin{theorem} \label{thm: constant}

Let $k \geqs 0$ and $a,q \geqs 1$ be fixed integers with $(a,q) = 1$, $1\leqs a\leqs q$. If Conjecture~\ref{conj: splitting} and Conjecture~\ref{conj: randommatrices} are true for all tuples of nonnegative integers $\vg{\ell}$ indexed by characters modulo $q$ satisfying $|\vg{\ell}| = k$, then Conjecture~\ref{conj: hurwitzmoments} follows for that value of $k$ and $\alpha = a/q$ with 
\begin{equation} \label{eqn: constant} c_k(\alpha) = c_k \frac{q^k}{\varphi(q)^{2k-1}} \prod_{p\mid q} \bigg\{\sum_{m=0}^\infty \binom{m+k-1}{k-1}^2 p^{-m}\bigg\}^{-1},\end{equation}
where $c_k = c_k(1)$ is the usual proportionality constant for moments of $\zeta(s)$. In other words, under the above hypotheses,
\begin{equation*}\begin{split} \int_T^{2T} \left|\zeta\left(\tfrac{1}{2}+it,\alpha\right)\right|^{2k}\,dt \sim c_k(\alpha) T(\log T)^{k^2},  \end{split}\end{equation*}
as $T \to \infty$ where $c_k(\alpha)$ is as in (\ref{eqn: constant}).

\end{theorem}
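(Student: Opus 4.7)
The plan is to integrate the identity (\ref{eqn: hurwitzdecomp}) over $t \in [T,2T]$ with $s = 1/2 + it$, split the resulting double sum over $\vg{\ell}^{(1)}, \vg{\ell}^{(2)}$ into a diagonal part ($\vg{\ell}^{(1)} = \vg{\ell}^{(2)}$) and an off-diagonal part, and apply Theorem~\ref{thm: L-asymptotic} to each diagonal piece. On the diagonal one has $s(a;\vg{\ell},\vg{\ell}) = 1$ and $\LLa(s)\ov{\LLb(s)} = |\LL(s)|^2$, so Theorem~\ref{thm: L-asymptotic} (which is available under the hypotheses of the theorem) yields
\[\frac{1}{T}\int_T^{2T}|\LL(s)|^2\,dt \sim \clq (\log T)^{\lambda(\vg{\ell})},\]
since $\log \qchi T \sim \log T$ for fixed $q$. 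The constraint $|\vg{\ell}| = k$ forces $\lambda(\vg{\ell}) = \sum_\chi \ell_\chi^2 \leqs k^2$, with equality precisely when $\vg{\ell}$ is supported on a single character; hence only these $\varphi(q)$ ``concentrated'' tuples contribute at order $T(\log T)^{k^2}$, while every other diagonal term is $o(T(\log T)^{k^2})$.

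For off-diagonal pairs $\vg{\ell}^{(1)} \neq \vg{\ell}^{(2)}$, Cauchy--Schwarz combined with Theorem~\ref{thm: L-asymptotic} bounds the contribution by $O(T(\log T)^{(\lambda(\vg{\ell}^{(1)})+\lambda(\vg{\ell}^{(2)}))/2})$, which is $o(T(\log T)^{k^2})$ whenever at least one of $\lambda(\vg{\ell}^{(i)})$ is strictly less than $k^2$. The remaining cross terms are those in which both $\vg{\ell}^{(i)}$ are concentrated on \emph{distinct} characters $\chi_1 \neq \chi_2$, producing integrals of the form $\int_T^{2T} L(\tfrac12+it,\chi_1)^k\,\ov{L(\tfrac12+it,\chi_2)}^k\,dt$. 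By the moments recipe of \cite{conreymomentsrecipe}---no diagonal pairing between the $L$'s and $\ov{L}$'s is possible when $\chi_1 \neq \chi_2$, so the main term is only $O(T)$---these are strictly lower order than $T(\log T)^{k^2}$. This is precisely the heuristic already invoked in passing from (\ref{eqn: hurwitzdecomp}) to (\ref{eqn: heuristicanswer}), and it constitutes the one genuinely non-rigorous step beyond Conjectures~\ref{conj: splitting} and~\ref{conj: randommatrices}; making it unconditional is the main obstacle.

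It remains to evaluate $\clq$ when $\vg{\ell}$ is concentrated with $\ell_{\chi_0} = k$ for some character $\chi_0$. Then $\LL(s) = L(s,\chi_0)^k$, so $\dl(n) = \chi_0(n) d_k(n)$; consequently $|\dl(p^m)|^2 = d_k(p^m)^2$ for $p \nmid q$, while for $p \mid q$ only the $m = 0$ term survives, and the Barnes-$G$ product in Theorem~\ref{thm: L-asymptotic} collapses to $G(k+1)^2/G(2k+1)$ (since $G(1) = 1$). Splitting the Euler product for $\clq$ according to whether $p \mid q$ or $p \nmid q$ and comparing with the Keating--Snaith factorization
\[c_k = \frac{G(k+1)^2}{G(2k+1)}\prod_p\bigg\{\Big(1-\tfrac{1}{p}\Big)^{k^2}\sum_{m=0}^\infty\frac{d_k(p^m)^2}{p^m}\bigg\},\]
the bad-prime factors cancel, and using $d_k(p^m)=\binom{m+k-1}{k-1}$ yields
\[\clq = c_k\prod_{p \mid q}\bigg\{\sum_{m=0}^\infty\binom{m+k-1}{k-1}^2 p^{-m}\bigg\}^{-1}.\]
Summing the $\varphi(q)$ identical concentrated-tuple contributions (each carrying $\binom{k}{\vg{\ell}}^2 = 1$) and multiplying through by the prefactor $q^k/\varphi(q)^{2k}$ in (\ref{eqn: hurwitzdecomp}) reproduces exactly the constant (\ref{eqn: constant}).
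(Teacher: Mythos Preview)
Your overall strategy---integrate (\ref{eqn: hurwitzdecomp}), keep only diagonal terms, and among those keep only the concentrated tuples $\vg{\ell} = k\vg{\delta}^\chi$---matches the paper's exactly, and your computation of $\clq$ for concentrated tuples and the final assembly of $c_k(\alpha)$ are both correct.

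However, you have left a genuine gap at the ``major off-diagonal'' step (both $\vg{\ell}^{(i)}$ concentrated on distinct characters), and you explicitly flag it as a non-rigorous step requiring input beyond Conjectures~\ref{conj: splitting} and~\ref{conj: randommatrices}. In fact no additional hypothesis is needed: the paper disposes of these terms rigorously under $\Sp$. The trick is \emph{not} to apply Cauchy--Schwarz to the obvious factorization $L(s,\chi)^k \cdot \ov{L(s,\nu)}^k$ (which, as you implicitly observe, only gives $O(T(\log T)^{k^2})$), but first to rearrange
\[
L(s,\chi)^k\,\ov{L(s,\nu)}^k \;=\; \bigl[L(s,\chi)^{k-1}\,\ov{L(s,\nu)}\bigr]\cdot\bigl[L(s,\chi)\,\ov{L(s,\nu)}^{k-1}\bigr],
\]
and \emph{then} apply Cauchy--Schwarz. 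Each bracket has $|\vg{\ell}| = k$ and $\lambda = (k-1)^2 + 1 = k^2 - 2k + 2$, so Theorem~\ref{thm: L-asymptotic} (available under the hypotheses of the theorem) bounds the mean square of each by $\ll_{q,k} T(\log T)^{k^2-2k+2}$, whence the cross term is $O(T(\log T)^{k^2-2k+2}) = o(T(\log T)^{k^2})$ for $k \geqs 2$. For $k = 1$ this rearrangement is vacuous, and the paper instead invokes an unconditional bound $\int_T^{2T} L(s,\chi)\ov{L(s,\nu)}\,dt \ll_q T(\log T)^{3/4}$ from the literature. With this repair your argument becomes a complete proof under the stated hypotheses, essentially identical to the paper's.
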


Since the current levels of technology can handle second moments and fourth moments of $\zeta(s)$ really well, it is natural to hope that we can prove Conjectures~\ref{conj: splitting}~and~\ref{conj: randommatrices} for $|\vg{\ell}|\leqs 2$. We define the Kronecker delta $\vg{\delta}^\chi$ by

\begin{equation*} \delta_{\nu}^\chi = \begin{cases} 1 & \text{ if } \chi = \nu \\ 0 & \text{ if } \chi \neq \nu. \end{cases} \end{equation*}
Then, we can prove:

\begin{theorem} \label{thm: smallk}

Conjecture~\ref{conj: splitting} and Conjecture~\ref{conj: randommatrices} hold unconditionally for $|\vg{\ell}| = 1$. In particular $|\vg{\ell}| = 1$ if and only if $\vg{\ell} = \vg{\delta}^\chi$ for some character $\chi$, in which case we have that for $s = 1/2 + it$, and $X,T \to \infty$ with $X \ll_\epsilon (\log T)^{2-\epsilon}$,
\begin{equation*}\begin{split} \frac{1}{T}\int_T^{2T}\left|L(s,\chi)\right|^2\,dt \sim \left(\frac{1}{T}\int_T^{2T}\left|P_X(s,\chi)\right|^2\,dt\right)\times \left(\frac{1}{T}\int_T^{2T} \left|Z_X(s,\chi)\right|^2\,dt \right), \end{split}\end{equation*}
and
\begin{equation} \frac{1}{T}\int_T^{2T} \left|Z_X\left(s,\chi\right)\right|^2\,dt \sim \frac{\log \qchi T }{e^\gamma \log X}. \end{equation}

\end{theorem}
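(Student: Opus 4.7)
The plan is to compute, each in isolation, the three mean values appearing in the splitting statement, and then to observe that they are algebraically consistent. Since $|\vg{\ell}|=1$ forces $\vg{\ell}=\vg{\delta}^\chi$ for a unique character $\chi$ modulo $q$, we have $\LL(s)=L(s,\chi)$, $\PP(s)=P_X(s,\chi)$, and $\ZZ(s)=Z_X(s,\chi)$, with Dirichlet coefficients $\dl(n)=\chi(n)$.

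First I will appeal to the classical second moment for Dirichlet $L$-functions on the critical line: for $\chi$ modulo $q$ induced by the primitive character $\chi^*$ modulo $\qchi$, one has
\[ \frac{1}{T}\int_T^{2T}\bigl|L(\tfrac12+it,\chi)\bigr|^2\,dt \sim \frac{\varphi(q)}{q}\log(\qchi T), \]
which can be deduced from the primitive-character second moment (via an approximate functional equation argument) together with the factorization $L(s,\chi) = L(s,\chi^*)\prod_{p\mid q,\,p\nmid \qchi}(1-\chi^*(p)/p^s)$ and a short computation of twisted second moments. Next, Theorem~\ref{thm: eulerproductmoment} applied to $\vg{\ell}=\vg{\delta}^\chi$ immediately gives
\[ \frac{1}{T}\int_T^{2T}\bigl|P_X(\tfrac12+it,\chi)\bigr|^2\,dt \sim e^{\gamma}\log X\cdot \frac{\varphi(q)}{q}, \]
since the local factors at primes $p\nmid q$ telescope to $b(\vg{\delta}^\chi)=1$, while $F_X(\vg{\delta}^\chi)$ collapses to $e^\gamma\log X \prod_{p\mid q}(1-1/p)$ using $|\chi(p)|^2 = \mathbf{1}_{p\nmid q}$ and $\lambda=1$.

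The core task is then the direct unconditional computation of the mean square of $Z_X$. Here I would follow the Gonek--Hughes--Keating template, adapted to Dirichlet $L$-functions. Writing
\[ \log\bigl|Z_X(\tfrac12+it,\chi)\bigr|^2 = -2\Re\sum_{\rho}U\bigl((\tfrac12+it-\rho)\log X\bigr), \]
with $\rho$ running over nontrivial zeros of $L(s,\chi^*)$, and exploiting the rapid decay of $U(z)$ for large $|z|$, one localizes the contribution to zeros with $|t-\Im\rho|\ll 1/\log X$; the mean over $t\in[T,2T]$ is then governed by the Riemann--von Mangoldt zero-counting formula $N(T,\chi^*)\sim (T/2\pi)\log(\qchi T)$. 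Carrying this through should yield
\[ \frac{1}{T}\int_T^{2T}\bigl|Z_X(\tfrac12+it,\chi)\bigr|^2\,dt \sim \frac{\log(\qchi T)}{e^{\gamma}\log X}, \]
which is Conjecture~\ref{conj: randommatrices} for $|\vg{\ell}|=1$. The splitting conjecture for $|\vg{\ell}|=1$ then follows by arithmetic: multiplying the $P$ and $Z$ asymptotics gives $(\varphi(q)/q)\log(\qchi T)$, matching the $L$ asymptotic.

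The main obstacle is the unconditional derivation of the $Z_X$ asymptotic. The corresponding computation for $\zeta$ by Gonek--Hughes--Keating is cleaner under RH, but here we wish to avoid assuming GRH for $L(s,\chi^*)$. The saving grace is that the second moment of $|Z_X|^2$ is sensitive only to the density of zeros and not to their precise real parts; contributions from zeros with $\Re\rho$ far from $1/2$ can be absorbed into the error term using classical zero-density estimates for $L(s,\chi^*)$ in conjunction with the decay of $U$.
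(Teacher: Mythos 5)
Your reduction of the theorem to the single asymptotic
\[
\frac{1}{T}\int_T^{2T}\bigl|Z_X(\tfrac12+it,\chi)\bigr|^2\,dt \sim \frac{\log(\qchi T)}{e^{\gamma}\log X}
\]
is correct and matches the paper: the second moment of $L(s,\chi)$ is classical and your evaluation of the $P_X$ moment from Theorem~\ref{thm: eulerproductmoment} (with $b(\vg{\delta}^\chi)=1$ and $F_X(\vg{\delta}^\chi)=(e^\gamma\log X)\varphi(q)/q$) is right, so the splitting statement indeed follows from the $Z_X$ asymptotic by arithmetic.

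However, your proposed method for establishing the $Z_X$ asymptotic is not the one the paper uses, and it has a genuine gap. You suggest working directly with the representation $\log|Z_X|^2=-2\Re\sum_\rho U((s_0-\rho)\log X)$, averaging over $t\in[T,2T]$ using Riemann--von~Mangoldt, and controlling off-line zeros with classical zero-density estimates. Two obstacles make this route fail, or at least require ideas you have not supplied. First, $|Z_X|^2$ is an \emph{exponential} of a sum over zeros, so its mean over $t$ is not governed by the one-level density alone; after expanding the exponential one needs pair (and higher) correlations of the zeros of $L(s,\chi^*)$ at spacing $\asymp 1/\log X$, which are not available unconditionally. Second, zero-density theorems control the number of zeros with $\Re\rho>1/2+\delta$ only down to scales $\delta$ that are essentially constants or powers of $1/\log T$, whereas $U$ is sensitive to displacements of size $\asymp 1/\log X$, and zero-density gives nothing usable at that resolution. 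The claim that the computation ``is sensitive only to the density of zeros'' is therefore unjustified.

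The paper sidesteps zeros entirely. It uses Lemma~\ref{lem: shorteulerproduct} to construct a short Euler product $Q_X(s,\chi)$ approximating $P_X(s,\chi)^{-1}$, so that by Theorem~\ref{thm: eulerhadamard} the mean square of $Z_X$ equals, up to $1+O(1/\log X)$, the mean square of $L(s,\chi)Q_X(s,\chi)$. Since $Q_X$ can be truncated to a short Dirichlet polynomial, this is a \emph{twisted second moment} of $L(s,\chi)$, which is computed unconditionally via Wu's asymptotic (Lemma~\ref{lem: twisted2ndmom}) following the Gonek--Hughes--Keating template. That is the step your outline is missing: the unconditionality comes from pushing the computation onto the Dirichlet-polynomial side, not from estimating the zero sum. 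If you want to salvage your sketch, replace the zero-side analysis of $Z_X$ by this $L\cdot Q_X$ identity and invoke a twisted second-moment theorem.
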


The above theorem can almost certainly be extended to the case $|\vg{\ell}| = 2$. This corresponds to $\vg{\ell} = \vg{\delta}^\chi + \vg{\delta}^\nu$, and $\LL(s) = L(s,\chi)L(s,\nu)$ with $\chi$ and $\nu$ not necessarily distinct characters modulo $q$. 

We note first that some of these have already been proved. The case $\vg{\ell} = 2\vg{\delta}^{\chi_0}$ where $\chi_0$ is the principal character modulo $q$ was essentially proved by Gonek, Hughes, and Keating \cite[Theorem 3]{gonek2007hybrid}. More generally, the case $\vg{\ell} = \vg{\delta}^{\chi_0} + \vg{\delta}^\chi$ where $\chi$ is a (not necessarily primitive) quadratic Dirichlet character modulo $q$ was essentially proved by Heap \cite[Theorem~3]{heap2013hybriddedekind}. To see this, note from (\ref{eqn: zimprimitive}) that $Z_X(s,\chi)$ depends only on the primitive character $\chi^*$ modulo $\qchi$ that induces $\chi$. In particular, one can replace $L(s,\chi_0)^2$ with $\zeta(s)^2$ and $L(s,\chi_0)L(s,\chi)$ with $\zeta(s)L(s,\chi^*) = \zeta_{\K}(s)$ where $\K$ is a quadratic extension of $\Q$ and $\zeta_{\K}(s)$ is its Dedekind zeta function. Analogues of splitting for these products is precisely what was proven in these papers.

By following both these arguments, one should be able to extend to the general case $\vg{\ell} = \vg{\delta}^\chi + \vg{\delta}^\nu$. To do so, one would need a moment result for the product of two primitive Dirichlet $L$-functions and a short Dirichlet polynomial, generalizing that of \cite{heap2014twisted}. That is, we would need an asymptotic for
\begin{equation} \int_T^{2T}\left|L(s,\chi) L(s,\nu) \sum_{n\leqs T^{\theta}} \frac{a_n}{n^s}\right|^2 \,dt, \label{eqn: twistedfourthmomentgeneral}\end{equation}
where $\chi$ and $\nu$ are any primitive characters with conductor dividing $q$, and some $0<\theta<1$ sufficiently large. Such asymptotics exist in the special cases of $\zeta(s)^2$ \cite{hughes2010twistedfourth,bettin4thmoment} and $\zeta(s)L(s,\chi)$ \cite{heap2014twisted}, for any character $\chi$. Proving (\ref{eqn: twistedfourthmomentgeneral}) and the splitting conjecture for $\vg{\ell} = \vg{\delta}^\chi + \vg{\delta}^\nu$ for more general $\chi,\nu$ by using the methods of \cite{heap2013hybriddedekind}, \cite{heap2014twisted} and  \cite{gonek2007hybrid} as outlined above should be possible but long and technical. Thus, we do not pursue this here.

Note that Theorem~\ref{thm: constant} and Theorem~\ref{thm: smallk} together establish Conjecture~\ref{conj: hurwitzmoments} with $k = 1$ and $\alpha$ rational, giving an alternate proof of the leading term of Rane's asymptotic (\ref{eqn: rane}) in this case.

Lastly, as a final piece of evidence for Conjecture~\ref{conj: hurwitzmoments}, we prove the following results about upper and lower bounds:

\begin{theorem} \label{thm: upperandlower}

Let $k \geqs 0$ and $a,q \geqs 1$ be fixed integers with $(a,q) = 1$, $1\leqs a\leqs q$. If the Generalized Riemann Hypothesis (GRH) holds for every Dirichlet $L$-function modulo $q$, then for $\alpha = a/q$, $s = 1/2 + it$ and $\epsilon>0$,

\begin{equation*} T(\log T)^{k^2} \ll_{q,k} \int_T^{2T}\left|\zeta\left(s,\alpha\right)\right|^{2k} \,dt \ll_{q,k,\epsilon} T(\log T)^{k^2 + \epsilon} \end{equation*}

\end{theorem}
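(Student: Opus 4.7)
For the upper bound, I start from the decomposition (\ref{eqn: hurwitzdecomp}), namely $\zeta(s,\alpha) = \frac{q^s}{\varphi(q)}\sum_\chi \ov\chi(a)L(s,\chi)$, and apply the elementary power mean inequality to obtain $|\zeta(1/2+it,\alpha)|^{2k} \ll_{q,k} \sum_\chi |L(1/2+it,\chi)|^{2k}$. Integrating termwise over $[T,2T]$ and invoking Soundararajan's conditional bound $\int_T^{2T}|L(1/2+it,\chi)|^{2k}\,dt \ll_{q,k,\epsilon} T(\log T)^{k^2+\epsilon}$---which, given the assumed GRH for Dirichlet $L$-functions modulo $q$, extends the original argument for $\zeta(s)$ to this family without essential modification---immediately yields the desired upper bound.

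For the lower bound, I would adapt the Rudnick--Soundararajan first-moment mollifier method directly to $\zeta(s,\alpha)$. Focusing on positive integer $k$ for clarity (the general case follows similarly with $d_k^{(a,q)}(n)$ defined via the character decomposition of $\varphi(q)^{-k}\big(\sum_\chi \ov\chi(a)L(s,\chi)\big)^k$), set
\[ A(s) := \sum_{n \leqs T^\theta} d_k^{(a,q)}(n)\,n^{-s}, \]
where $d_k^{(a,q)}(n)$ is the number of ordered factorizations $n = m_1 \cdots m_k$ with each $m_i \equiv a \pmod{q}$, and $\theta > 0$ is a small fixed parameter. The Cauchy--Schwarz inequality then gives
\[ \Bigg|\int_T^{2T} \zeta(s,\alpha)^k q^{-ks}\ov{A(s)}\,dt\Bigg|^2 \leqs q^{-k}\bigg(\int_T^{2T}|\zeta(s,\alpha)|^{2k}\,dt\bigg)\bigg(\int_T^{2T}|A(s)|^2\,dt\bigg), \]
and the claimed lower bound follows once one establishes (i) $\int_T^{2T}|A(s)|^2\,dt \asymp_{q,k} T(\log T)^{k^2}$, via the Montgomery--Vaughan mean value theorem combined with a Tauberian analysis of $\sum_{n \leqs N} d_k^{(a,q)}(n)^2/n$ (using the $L$-function decomposition and the pole of $\zeta(s)$ at $s=1$), and (ii) $\int_T^{2T} \zeta(s,\alpha)^k q^{-ks}\ov{A(s)}\,dt \gg_{q,k} T(\log T)^{k^2}$, via an approximate functional equation for $\zeta(s,\alpha)^k$ obtained by applying the AFE to each $L(s,\chi)$ in the decomposition and combining, with the diagonal pairing providing the main term.

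The main obstacle will be step (ii): carefully controlling the cross-terms of the combined AFE---whose total length is $\asymp (qT)^{k/2}$---against $\ov{A(s)}$, particularly across distinct characters $\chi$. It is precisely here that GRH enters crucially, providing the uniform Lindel\"of-type pointwise bounds $|L(1/2+it,\chi)| \ll_{q,\epsilon} t^\epsilon$ together with the associated short-interval mean value estimates needed to ensure that these off-diagonal contributions are of smaller order than the main term $T(\log T)^{k^2}$.
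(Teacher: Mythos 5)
Your upper bound argument is correct and actually a touch more direct than the paper's: the power-mean inequality $|\zeta(\tfrac12+it,\alpha)|^{2k}\ll_{q,k}\sum_\chi|L(\tfrac12+it,\chi)|^{2k}$ reduces matters to the conditional moment bound for individual Dirichlet $L$-functions, which under GRH follows from extending Soundararajan's method (the paper cites the Milinovich--Turnage-Butterbaugh generalization \cite{milinovich2014moments} for exactly this, via Lemma~\ref{lem: L-upperbound}). The paper reaches the same bound through Proposition~\ref{prop: mainprop}, but both routes rest on the same conditional $L$-function moment input.

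Your lower bound, by contrast, misses the key simplification and leaves the genuinely hard step unexecuted. You propose to build a Rudnick--Soundararajan-style first-moment mollifier argument directly for $\zeta(s,\alpha)$, but $\zeta(s,\alpha)$ has no Euler product, so the usual diagonal analysis has no clean multiplicative structure to act on; after expanding in characters you are left with approximate functional equations for all $\varphi(q)^{k}$ tuples of characters, and controlling the cross-character off-diagonal contributions --- which you flag as ``the main obstacle'' --- \emph{is} the lower bound. This is not a detail to defer; nothing in your sketch actually establishes step (ii), nor does the appeal to GRH-Lindel\"of pointwise bounds substitute for the required diagonal/off-diagonal accounting, and the non-integer $k$ case is waved at rather than addressed. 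The paper sidesteps all of this with a short observation: under GRH, Proposition~\ref{prop: mainprop} writes $M_k(T;\alpha)=\frac{q^k}{\varphi(q)^{2k}}\sum_\chi M_k(T;\chi)+o_{q,k}(T(\log T)^{k^2})$, a \emph{positive} combination up to lower-order error, so one may discard every character except $\chi_0$. Since $L(s,\chi_0)$ differs from $\zeta(s)$ by a bounded, nonvanishing Euler factor over the finitely many primes dividing $q$, one has $M_k(T;\chi_0)\asymp_{q,k} M_k(T)$, and the known lower bounds $M_k(T)\gg_k T(\log T)^{k^2}$ (Ramachandra, Heath-Brown, {Radziwi\l\l}--Soundararajan, Heap--Soundararajan) complete the proof. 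That positivity-plus-reduction step replaces the entirety of your step (ii) with a citation; you should look for it rather than reprove a Hurwitz analogue of the mollifier method from scratch.
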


In principle, it should be possible to remove the $\epsilon$ in the upper bound by using the methods of Harper \cite{harper2013sharp}.

The rest of the paper is structured as follows. In Section~\ref{sec: eulerhadamard}, we sketch a proof of Theorem~\ref{thm: eulerhadamard}; in Section~\ref{sec: eulerproductmoment}, we prove Theorem~\ref{thm: eulerproductmoment}; in Section~\ref{sec: randommatrices}, we provide some evidence for Conjecture~\ref{conj: randommatrices}; in Section~\ref{sec: smallk}, we prove Theorem~\ref{thm: smallk}; and in Section~\ref{sec: hurwitzmoments}, we prove Theorems~\ref{thm: rationalfourthmoment}, \ref{thm: L-asymptotic}, \ref{thm: constant}, and \ref{thm: upperandlower} and verify our conjectured constants are correct in the known cases (viz. $|\vg{\ell}| \leqs 2$ or $k \leqs 2$).

\subsection*{Acknowledgements}
I would like to thank my adviser, Steven Gonek, for introducing me to this problem, for his encouragement and for his helpful comments on an earlier version of this manuscript. I would also like to thank Winston Heap for helpful discussions, and for pointing out that Theorem~\ref{thm: rationalfourthmoment} had been proved in Andersson's thesis \cite{anderssonthesis}. Finally, I would like to thank the anonymous referee for numerous corrections and comments that helped improve this manuscript considerably.

\section{Proof of Theorem~\ref{thm: eulerhadamard}} \label{sec: eulerhadamard}

The proof of Theorem~\ref{thm: eulerhadamard} is very similar to \cite[Theorem 1]{gonek2007hybrid} and \cite[Theorem 1]{bui2007mean}. The main difference lies in the fact that we are not assuming that the character is primitive.

Recall that if $\chi$ and $\chi^*$ are as in the theorem, then
\begin{equation} L(s,\chi) = L(s,\chi^*)\prod_{p \mid q}\left(1 - \frac{\chi^*(p)}{p^s}\right). \label{eqn: limprimitive}\end{equation}
Further, by inspection we see that if $P(s,\cdot)$ and $Z(s,\cdot)$ are as in the theorem, then
\begin{equation} P_X(s,\chi) = P_X(s,\chi^*)\prod_{p \mid q}\left(1 - \frac{\chi^*(p)}{p^s}\right), \label{eqn: pimprimitive}\end{equation}
\begin{equation} Z_X(s,\chi) = Z_X(s,\chi^*). \label{eqn: zimprimitive}\end{equation}

Clearly, (\ref{eqn: limprimitive}),(\ref{eqn: pimprimitive}) and (\ref{eqn: zimprimitive}) show that we can assume without loss of generality that $\chi$ is a primitive character modulo $q$.

At this point, one can follow \cite[Theorem 1]{gonek2007hybrid} and \cite[Theorem 1]{bui2007mean} \emph{mutatis mutandis} to complete the proof. 
 
\section{Proof of Theorem~\ref{thm: eulerproductmoment}} \label{sec: eulerproductmoment}

We briefly discuss some notation for this section. Recall that $\dl(n)$ is the coefficient of $n^{-s}$ in the Dirichlet series of $\LL(s)$. $\dl(n)$ is essentially a divisor function `twisted' by the Dirichlet characters modulo $q$. We also use $d_k(n)$ for the true divisor function, i.e., the coefficient of $n^{-s}$ in $\zeta(s)^k$. In particular, it is immediate from writing $\dl(n)$ out as a convolution that $|\dl(n)|\leqs d_k(n)$ for every $n \in \N$. We will use the notation $\Sm(X)$ to denote the set of $X$-smooth (also known as $X$-friable) numbers which are coprime to $q$. That is,
\begin{equation*} \Sm(X) = \{ n \in \N : p \mid n \implies p \leqs X \text{ and } p \nmid q \}. \end{equation*}

We will need as a lemma, Mertens' theorem for arithmetic progressions:

\begin{lemma} \label{lem: apmertens}

Let $\kappa$ be a fixed real number, and $(c,q) = 1$. Then,
\begin{equation*} \prod_{\substack{p\leqs X\\p \equiv c \pmod q}} \left(1 - \frac{1}{p}\right)^{-\kappa} = H^q_c(\kappa)\left(1 + \bigO_{q,\kappa}\left(\frac{1}{\log X}\right)\right) \end{equation*}
where,
\begin{equation*} H^q_c(\kappa) = \left\{e^\gamma \log X\prod_{p} \left(1 - \frac{1}{p}\right)^{1 - \delta_q(p,c)\varphi(q)} \right\}^{\frac{\kappa}{\varphi(q)}}. \end{equation*}
Here $\gamma$ is the Euler-Mascheroni constant and $\delta_q(x,y)$ is the Kronecker delta in $\Zf{q}$,

\begin{equation*} \delta_q(x,y) = \begin{cases} 1 & \text{ if } x \equiv y \pmod q, \\ 0 &\text{ otherwise.} \end{cases} \end{equation*}
\end{lemma}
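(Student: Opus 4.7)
The plan is to take logarithms and reduce the problem to the classical Mertens theorem in arithmetic progressions. Expanding
\begin{equation*} \log \prod_{\substack{p \leqs X \\ p \equiv c \pmod q}} \left(1 - \frac{1}{p}\right)^{-\kappa} = \kappa \sum_{\substack{p \leqs X \\ p \equiv c \pmod q}} \frac{1}{p} + \kappa \sum_{\substack{p \leqs X \\ p \equiv c \pmod q}} \sum_{m \geqs 2} \frac{1}{m p^m}, \end{equation*}
the double sum over $m \geqs 2$ is absolutely convergent as $X \to \infty$, contributing a constant plus an $O(1/X)$ error. Thus everything reduces to a sharp asymptotic for $\sum_{p \leqs X,\, p \equiv c \pmod q} 1/p$.

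To evaluate this sum I would invoke orthogonality of Dirichlet characters: since $(c,q) = 1$, only primes with $(p,q) = 1$ contribute, and the indicator of the arithmetic progression equals $\tfrac{1}{\varphi(q)}\sum_\chi \overline{\chi}(c)\chi(p)$. For the principal character this gives the classical Mertens estimate $\sum_{p \leqs X,\, p \nmid q} 1/p = \log\log X + M - \sum_{p \mid q} 1/p + O(1/\log X)$, where $M$ is the Meissel-Mertens constant. For each non-principal $\chi$, partial summation applied to the Siegel-Walfisz prime number theorem gives $\sum_{p \leqs X} \chi(p)/p = \log L(1,\chi) + C(\chi) + O_q(1/\log X)$ for an explicit constant $C(\chi)$ accounting for higher prime powers in the logarithmic series of $L(s,\chi)$. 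Exponentiating and collecting constants produces
\begin{equation*} \prod_{\substack{p \leqs X \\ p \equiv c \pmod q}}\left(1 - \frac{1}{p}\right)^{-\kappa} = C(q,c,\kappa)\,(\log X)^{\kappa/\varphi(q)}\left(1 + O_{q,\kappa}\left(\frac{1}{\log X}\right)\right) \end{equation*}
for some constant $C(q,c,\kappa)$.

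The main obstacle is identifying $C(q,c,\kappa)$ with the stated $H^q_c(\kappa)$. The cleanest route is to compare directly against classical Mertens by writing
\begin{equation*} \frac{\prod_{\substack{p \leqs X \\ p \equiv c \pmod q}}(1-1/p)^{-\kappa}}{\prod_{p \leqs X}(1-1/p)^{-\kappa/\varphi(q)}} = \exp\left(\kappa \sum_{p \leqs X} \left(\delta_q(p,c) - \frac{1}{\varphi(q)}\right)\left(-\log\left(1 - \frac{1}{p}\right)\right)\right). \end{equation*}
The mean-zero weight $\delta_q(p,c) - 1/\varphi(q)$ cancels the divergent $\log\log X$ part of $\sum 1/p$, and a short computation shows the remaining series converges to $(\kappa/\varphi(q))\log\prod_p(1-1/p)^{1 - \delta_q(p,c)\varphi(q)}$ with error $O_{q,\kappa}(1/\log X)$. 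Multiplying back by the standard Mertens asymptotic $\prod_{p \leqs X}(1-1/p)^{-\kappa/\varphi(q)} = (e^\gamma \log X)^{\kappa/\varphi(q)}(1 + O(1/\log X))$ and combining the two error factors produces exactly $H^q_c(\kappa)(1 + O_{q,\kappa}(1/\log X))$, completing the proof.
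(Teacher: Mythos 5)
Your argument is correct, but it takes a genuinely different route from the paper: the paper does not prove this lemma from scratch at all. It simply observes that the general $\kappa$ case reduces to $\kappa = 1$ by exponentiating, then cites Williams for the qualitative form of Mertens' theorem in arithmetic progressions and Languasco--Zaccagnini for the explicit identification of the constant $H^q_c(1)$ (together with a sharper, $q$-uniform error term, which the paper notes it does not need). Your proof, by contrast, is self-contained: you take logarithms, invoke orthogonality of Dirichlet characters and Siegel--Walfisz together with partial summation to get the $\frac{1}{\varphi(q)}\log\log X + \text{const} + O_q(1/\log X)$ asymptotic for $\sum_{p \leqs X,\, p\equiv c} 1/p$, and then use the clean comparison trick of dividing by $\prod_{p\leqs X}(1-1/p)^{-\kappa/\varphi(q)}$ so that the mean-zero weight $\delta_q(p,c)-1/\varphi(q)$ kills the divergent $\log\log X$, leaving a conditionally convergent product that is precisely the $p$-part of $H^q_c(\kappa)$. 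This is the right way to see why the constant has the stated shape, and the tail estimate $O_{q,\kappa}(1/\log X)$ does follow from the same partial-summation bound applied to the tail of the mean-zero sum. The trade-off is that your proof is longer and spells out the details hidden in the citation, while the paper opts for the citation precisely because this is a standard result in the literature; the one step you leave compressed (``a short computation shows the remaining series converges to\dots'') is exactly the identification the paper outsources to Languasco--Zaccagnini, and it is sound as you have set it up.
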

\begin{proof}

Clearly the result for general $\kappa \in \R$ follows from the case $\kappa = 1$ by exponentiating. The latter is precisely Merten's theorem for arithmetic progressions which was proved by Williams \cite{mertensforapwilliams}. The expression for the constant $H^q_c(1)$, however,  is due to Languasco and Zaccagnini \cite[Section~6]{mertensforapconstant} who also improved the error term to one uniform in $q$. The weaker form suffices for our purposes. 

\end{proof}

We also have the following, which is immediate from \cite[Lemma 3]{bui2007mean}:

\begin{lemma} \label{lem: shorteulerproduct}
Let $\vg{\ell}$ be a tuple of nonnegative integers indexed by characters modulo $q$ such that $|\vg{\ell}| = \sum_\chi \ell_\chi = k$, let
\begin{equation*} P^*_X(s,\chi) = \prod_{p\leqs X} \left(1 - \frac{\chi(p)}{p^s}\right)^{-1} \prod_{\sqrt{X} <p \leqs X} \left(1 + \frac{\chi(p)^2}{2p^{2s}}\right)^{-1}, \end{equation*}
and let
\begin{equation*} \PS(s) = \prod_{\chi} P^*_X(s)^{\ell_\chi}. \end{equation*}
Then, uniformly for $\sigma \geqs 1/2$ and $X > q^2$,
\begin{equation*} \PP(s) = \PS(s)\left(1 + \bigO_k\left(\frac{1}{\log X}\right)\right). \end{equation*}
\end{lemma}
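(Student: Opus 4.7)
The plan is to reduce the statement to the single-character approximation
\[ P_X(s,\chi) = P^*_X(s,\chi)\left(1 + O\left(\tfrac{1}{\log X}\right)\right), \]
uniformly for $\sigma \geqs 1/2$, $X > q^2$, and every Dirichlet character $\chi$ modulo $q$; raising to the power $\ell_\chi$ and multiplying over $\chi$ then gives the stated result, since $|\vg{\ell}| = k$ is fixed. This one-character statement is essentially \cite[Lemma~3]{bui2007mean}, and the bulk of the proof is in verifying it in our setting.

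To establish the one-character version, I would take logarithms and compare term by term. Using $\Lambda(n)/\log n = 1/m$ for $n = p^m$, and expanding the prefactor $\prod_{p \mid q}(1 - \chi^*(p)/p^s)$ via the Taylor series of $\log(1-x)$, one obtains
\[ \log P_X(s, \chi) = \sum_{\substack{p \nmid q \\ p^m \leqs X}} \frac{\chi(p)^m}{m p^{ms}} - \sum_{\substack{p \mid q \\ p^m > X}} \frac{\chi^*(p)^m}{m p^{ms}}, \]
where the second sum is $O_q(X^{-1/2})$ for $\sigma \geqs 1/2$, hence $O(1/\log X)$ once $X > q^2$. On the side of $P^*_X(s,\chi)$, expanding $-\log(1 - \chi(p)/p^s)$ and $\log(1 + \chi(p)^2/(2p^{2s}))$ in Taylor series, the $m = 1$ terms and the $m = 2$ terms with $p \leqs \sqrt X$ match those above; the inserted factor $\prod_{\sqrt X < p \leqs X}(1 + \chi(p)^2/(2p^{2s}))^{-1}$ is there precisely to cancel the $m = 2$ contribution from primes in $(\sqrt X, X]$; and the residual contribution from $m \geqs 3$ is $O(X^{-1/6})$ uniformly for $\sigma \geqs 1/2$.

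Summing the difference of the two logarithms with weights $\ell_\chi$ and exponentiating then yields the asserted approximation, with error $O_k(1/\log X)$. The main technical subtlety—and the reason the lemma is phrased using $\chi$ rather than $\chi^*$ in the definition of $P^*_X(s,\chi)$—is the uniform handling of primes dividing $q$: no Euler factors at such primes appear in $P^*_X(s,\chi)$, while in $P_X(s,\chi)$ their net contribution is $\sum_{p\mid q,\, p^m > X} \chi^*(p)^m/(mp^{ms})$, which is absorbed into the error by the tail bound above once $X > q^2$. This is the essential use of the hypothesis $X > q^2$ and is the one point that needs genuine care relative to the primitive-character treatment in \cite{bui2007mean}.
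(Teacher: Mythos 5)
Your proof is correct, but it takes a more self-contained route than the paper's. The paper first invokes \cite[Lemma~3]{bui2007mean} as a black box, giving $P_X(s,\chi^*)^{\ell_\chi} = P^*_X(s,\chi^*)^{\ell_\chi}\bigl(1+O_{\ell_\chi}(1/\log X)\bigr)$ for the \emph{primitive} character $\chi^*$, and then transfers to general $\chi$ purely algebraically: it observes that for $X>q^2$ both sides acquire the identical prefactor $\prod_{p\mid q}(1-\chi^*(p)/p^s)$ when passing from $\chi^*$ to $\chi$ (this is (\ref{eqn: pimprimitive}) on the $P_X$ side, and a direct inspection on the $P^*_X$ side, where $X>q^2$ is needed only to ensure no prime dividing $q$ lands in $(\sqrt X, X]$). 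By contrast, you re-derive the Bui--Keating estimate from first principles via a logarithmic comparison, and fold in the imprimitive correction in the same pass by isolating the tail $\sum_{p\mid q,\,p^m>X}\chi^*(p)^m/(mp^{ms})$. Your computation is sound: the $m=1$ terms and the $m=2$ terms with $p\leqs\sqrt X$ match exactly, the inserted factor over $\sqrt X<p\leqs X$ cancels the excess $m=2$ contribution, and the $m\geqs 3$ residual is dominated by $m=3$ giving $\ll X^{-1/6}$. One small imprecision: you label the primes-dividing-$q$ tail as ``$O_q(X^{-1/2})$, hence $O(1/\log X)$ once $X>q^2$,'' but the $q$-subscript must be removed for this deduction; the sharper bound is $\ll \omega(q)\,X^{-1/2}\ll X^{-1/2}\log q$, and then $X>q^2$ gives $\log q<\tfrac12\log X$, so the tail is $\ll X^{-1/2}\log X=O(1/\log X)$ with an absolute constant. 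With that fix, your argument yields the stated $O_k(1/\log X)$ after weighting by $\ell_\chi$ and exponentiating. What the paper's modular argument buys is brevity; what your direct argument buys is transparency and independence from the reference.
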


\begin{proof} From \cite[Lemma 3]{bui2007mean}, we get that
\begin{equation*} P_X(s,\chi^*)^{\ell_\chi} = P^*_X(s,\chi^*)^{\ell_\chi}\left(1 + \bigO_{\ell_\chi}\left(\frac{1}{\log X}\right)\right), \end{equation*}
where $\chi^*$ is the primitive character modulo $\qchi$ which induces $\chi$. Since $X > q^2$, we see that $p \mid q$ implies that $p \leqs \sqrt{X}$. Thus, by inspection, 
\begin{equation*} P_X^*(s,\chi) = P_X^*(s,\chi^*)\prod_{p \mid q}\left(1 - \frac{\chi^*(p)}{p^s}\right). \end{equation*}
Putting the above two equalities together with (\ref{eqn: pimprimitive}), we get that
\begin{equation*} P_X(s,\chi)^{\ell_\chi} = P^*_X(s,\chi)^{\ell_\chi}\left(1 + \bigO_{\ell_\chi}\left(\frac{1}{\log X}\right)\right). \end{equation*}
The lemma follows by taking a product over characters $\chi$ modulo $q$.
\end{proof}

For the rest of this  section, we will fix $s = 1/2 + it$. Now, we want to estimate $\int_T^{2T} \left|\PP(s)\right|^2 dt$ assuming that $q^2 < X \ll_\epsilon (\log T)^{2-\epsilon}$. Clearly, by Lemma~\ref{lem: shorteulerproduct},
\begin{equation*} \frac{1}{T} \int_T^{2T} |\PP(s)|^2\,dt = \left(\frac{1}{T} \int_T^{2T} |\PS(s)|^2 \,dt\right) \left(1 + \bigO_k\left(\frac{1}{\log X}\right)\right), \end{equation*} 
and so it suffices to compute $\int_T^{2T} |\PS(s)|^2\,dt$. 

From the definition of $\PS(s)$ in Lemma~\ref{lem: shorteulerproduct}, it follows that if
\begin{equation} \PS(s) = \sum_{n=1}^\infty \frac{\bet(n)}{n^s}, \label{eqn: ppdirichlet}\end{equation}
then $\bet(n)$ is multiplicative and supported on $\Sm(X)$, $|\bet(n)| \leqs d_{2k}(n)$ for all $n$, and finally for $n \in \Sm(\sqrt{X})$ and $p\in \Sm(X)$, we have $\bet(n) = \dl(n)$ and $\bet(p) = \dl(p)$.

We truncate the sum in (\ref{eqn: ppdirichlet}) at $T^\theta$ where $\theta>0$ will be chosen later. Thus,
\begin{equation*} \PS(s) = \sum_{\substack{n \in \Sm(X) \\ n \leqs T^\theta}} \frac{\bet(n)}{n^s} + \bigO\Bigg(\sum_{\substack{n\in\Sm(X)\\n > T^\theta}} \frac{|\bet(n)|}{n^{1/2}}\Bigg). \end{equation*}
Applying Rankin's trick and the estimate $|\bet(n)| \leqs d_{2k}(n)$ to the error term, we see that it is 
\begin{equation*} \begin{split} \ll_\epsilon \sum_{\substack{n \in \Sm(X)\\ n > T^\theta}} \left(\frac{n}{T^\theta}\right)^{\epsilon} \frac{|\bet(n)|}{n^{1/2}} &{} \leqs T^{-\epsilon\theta} \sum_{n\in\Sm(X)} \frac{d_{2k}(n)}{n^{1/2 - \epsilon}}\\&{} = T^{-\epsilon \theta} \prod_{\substack{p\leqs X\\p\nmid q}} \left(1 - p^{\epsilon-1/2}\right)^{-2k}. \end{split} \end{equation*} 

Using $\log(1-x)^{-1} = \bigO(x)$, we see that the product on the right is

\begin{equation*} T^{-\epsilon\theta} \exp\left(\bigO\left(k\sum_{p\leqs X} p^{\epsilon -1/2}\right)\right). \end{equation*}

Applying the prime number theorem and integrating by parts, we see that since $X \ll_\epsilon (\log T)^{2-\epsilon}$, this is

\begin{equation*} \begin{split} & \ll T^{-\epsilon \theta} \exp\left(\bigO\left(\frac{k X^{1/2+\epsilon}}{(1/2+\epsilon)\log X}\right)\right) \\&{} \ll T^{-\epsilon\theta} \exp\left(\bigO_\epsilon\left(\frac{k\log T}{\log\log T}\right)\right) \ll_{k,\epsilon,\theta} T^{-\epsilon\theta/2}. \end{split} \end{equation*}
Hence, we have 
\begin{equation} \PS(s) = \sum_{\substack{n\in \Sm(X)\\n \leqs T^\theta}} \frac{\bet(n)}{n^s} + \bigO_{k,\epsilon,\theta}(T^{-\epsilon\theta/2}). \label{eqn: pishortdir}\end{equation}

Now, by the classical mean value theorem for Dirichlet polynomials, we have that
\begin{equation*} \int_T^{2T} \Bigg|\sum_{\substack{n\in\Sm(X)\\n \leqs T^\theta}} \frac{\bet(n)}{n^{1/2+it}}\Bigg|^2\,dt = (T + \bigO(T^\theta \log T)) \sum_{\substack{n\in \Sm(X)\\n\leqs T^\theta}} \frac{|\bet(n)|^2}{n}. \end{equation*}
Extending the sum on the right hand side to infinity introduces an error $O_{k,\epsilon,\theta}(T^{-\epsilon\theta/2})$, by the same argument as before. Thus, setting $\theta = 1/2$, we see that 
\begin{equation} \frac{1}{T} \int_T^{2T} \Bigg|\sum_{\substack{n\in\Sm(X)\\n \leqs T^{1/2}}} \frac{\bet(n)}{n^{1/2+it}}\Bigg|^2\,dt =\sum_{n\in\Sm(X)} \frac{|\bet(n)|^2}{n} (1 + O_{k,\epsilon}(T^{-\epsilon/4})) \label{eqn: dirichletmvt}\end{equation}
Using (\ref{eqn: pishortdir}) to replace $\PS(s)$ with a short Dirichlet polynomial together with (\ref{eqn: dirichletmvt}) and Cauchy-Schwarz, we conclude that
\begin{equation*} \frac{1}{T} \int_T^{2T} |\PS(s)|^2 \,dt = \sum_{n\in\Sm(X)} \frac{|\bet(n)|^2}{n} (1 + O_{k,\epsilon}(T^{-\epsilon/4})). \end{equation*}

Thus, it remains to estimate the sum $\sum_{n\in\Sm(X)} \frac{|\bet(n)|^2}{n}$. Since $\bet$ is multiplicative and supported on $\Sm(X)$, we see that

\begin{equation*}\sum_{n\in\Sm(X)} \frac{|\bet(n)|^2}{n} = \prod_{\substack{p \leqs X \\ p\nmid q}} \left( \sum_{m=0}^\infty \frac{|\bet(p^m)|^2}{p^{m}}\right). \end{equation*}
Heuristically, $\bet(n)$ was chosen to approximate $\dl(n)$. So, we expect that we can replace $\bet(p^m)$ with $\dl(p^m)$ on the right with a tolerable multiplicative error. Now, recall that $\bet(n) = \dl(n)$ when $n \in \Sm(\sqrt{X})$, and $\bet(p) = \dl(p)$ for $p \leqs X$. Thus, we can replace $\bet(p^m)$ by $\dl(p^m)$ if $p \leqs \sqrt{X}$ or $m =1$. Hence, it suffices to bound
\begin{equation*} \prod_{\sqrt{X} < p \leqs X} \frac{1 + \frac{|\dl(p)|^2}{p}+ \sum_{m=2}^\infty \frac{|\bet(p^m)|^2}{p^{m}}}{\sum_{m=0}^\infty \frac{|\dl(p^m)|}{p^{m}}}. \end{equation*} 
However, this is clearly
\begin{equation*} \prod_{\sqrt{X} < p \leqs X} \left(1 + \bigO_k\left(\frac{1}{p^{2}}\right)\right) = 1 + \bigO_k\left(\frac{X^{-1/2}}{\log X}\right). \end{equation*}
Thus,
\begin{equation} \sum_{n\in\Sm(X)} \frac{|\bet(n)|^2}{n} = \left(1 + \bigO_k\left(\frac{X^{-1/2}}{\log X}\right) \right)\prod_{\substack{p \leqs X \\ p\nmid q}} \left( \sum_{m=0}^\infty \frac{|\dl(p^m)|^2}{p^{m}}\right). \label{eqn: momentofbeta}\end{equation}

Note that we can write the product on the right as
\begin{equation*} \prod_{\substack{p \leqs X \\ p\nmid q}} \left(\left(1 - \frac{1}{p}\right)^{|\dl(p)|^2} \sum_{m=0}^\infty \frac{|\dl(p^m)|^2}{p^{m}}\right) \prod_{\substack{p \leqs X \\ p\nmid q}} \left(1 - \frac{1}{p}\right)^{-|\dl(p)|^2} \end{equation*}
The constraint $p\leqs X$ can be removed from the first product here as that induces a multiplicative error given by
\begin{equation*} \begin{split} \prod_{p > X} \left(\left(1 - \frac{1}{p}\right)^{|\dl(p)|^2} \sum_{m=0}^\infty \frac{|\dl(p^m)|^2}{p^{m}}\right)&{} = \prod_{p > X} \left(1 + \bigO_k\left(\frac{1}{p^{2}}\right)\right) \\&{} = 1 + \bigO_k\left(\frac{1}{X\log X}\right). \end{split} \end{equation*} 
On doing so, the expression now looks like 
\begin{equation} \label{eqn: bproduct} b(\vg{\ell},\sigma) \prod_{\substack{p \leqs X \\ p\nmid q}} \left(1 - \frac{1}{p^{2\sigma}}\right)^{-|\dl(p)|^2}. \end{equation} 

Now, define
\begin{equation*} r_\chi = \sum_{\substack{\nu,\nu' \\ \nu \ov{\nu'} = \chi}} \ell_\nu\ell_{\nu'} = \sum_{\nu} \ell_\nu \ell_{\nu\chi}. \end{equation*} 
In particular, note that $r_\chi = r_{\ov{\chi}}$ and $r_{\chi_0} = \sum_{\chi} \ell_\chi^2 = \lambda$. Further, define,
\begin{equation*} \kappa(c) = \sum_\chi r_\chi \chi(c). \end{equation*}

Clearly $\kappa(c)$ is real, and further the definition of $\dl(n)$ as a convolution gives us that
\begin{equation*} |\dl(p)|^2 = \sum_\chi r_\chi \chi(p) = \sum_\chi r_\chi \chi(c) = \kappa(c). \end{equation*}
if $p \equiv c \pmod q$. In particular, this means that the product in (\ref{eqn: bproduct}) can be divided along congruence classes modulo $q$, giving 
\begin{equation*} \prod_{(c,q) = 1} \prod_{\substack{p \leqs X \\ p\equiv c \pmod q}} \left(1 - \frac{1}{p^{2\sigma}}\right)^{-\kappa(c)}. \end{equation*}
where the outside product runs over a set of representatives of all residue classes coprime to $q$. Thus, applying Lemma~\ref{lem: apmertens}, this is
\begin{equation*} \left(1 + \bigO_q\left(\frac{1}{\log X}\right)\right) \prod_{(c,q) = 1} H^q_c(\kappa(c)). \end{equation*}

In fact, we have that $F_X(\vg{\ell}) = \prod_{(c,q) = 1} H^q_c(\kappa(c))$. To see this, note by orthogonality of characters,
\begin{equation*} \sum_{(c,q)=1} \kappa(c) = \sum_{(c,q) = 1}\sum_{\chi} r_\chi \chi(c) = r_{\chi_0} \varphi(q) = \lambda\varphi(q). \end{equation*}
Thus,
\begin{equation*} \begin{split} \prod_{(c,q)=1}H^q_c(\kappa(c)) &{} = \prod_{(c,q)=1} \left[e^\gamma \log X\prod_{p} \left(1 - \frac{1}{p}\right)^{1 - \delta_q(p,c)\varphi(q)} \right]^{\frac{\kappa(c)}{\varphi(q)}} \\& {} = (e^\gamma\log X)^{\lambda} \prod_{(c,q) = 1} \prod_p \left(1 - \frac{1}{p}\right)^{\frac{\kappa(c)}{\varphi(q)} - \delta_q(p,c)\kappa(c)} \\& {} = (e^\gamma\log X)^{\lambda} \prod_p \left(1 - \frac{1}{p}\right)^{\lambda - |\dl(p)|^2} = F_X(\vg{\ell}). \end{split} \end{equation*}

Collecting our estimates together proves Theorem~\ref{thm: eulerproductmoment}.

\section{Heuristics for Conjecture~\ref{conj: randommatrices}} \label{sec: randommatrices}

We closely follow the arguments in \cite[Section 4]{heap2013hybriddedekind} and \cite[Section 4]{gonek2007hybrid}. We want to heuristically estimate 
\begin{equation*} \frac{1}{T}\int_T^{2T} |\ZZ(s)|^{2}\,dt \end{equation*}
for $s = 1/2 + it$. The factor $Z_X(s,\chi)$ arises as a partial Hadamard product for $L(s,\chi^*)$, where $\chi^*$ is the unique primitive character that induces $\chi$. For a fixed $\chi$, $L(s,\chi^*)$ in the $t$-aspect forms a unitary family, and so we replace each $Z_X(s,\chi)$ with a unitary matrix chosen uniformly with respect to the Haar measure. 

The approximate mean density of the zeros of $L(s,\chi^*)$ in the region $0\leqs \sigma \leqs 1$ and $T \leqs t \leqs 2T$ is given by
\begin{equation*} \frac{1}{\pi}\D(\chi,T) = \frac{1}{\pi} \log\left(\frac{\qchi T}{2\pi}\right) \end{equation*}
where $\qchi$ is the conductor of $\chi$. The rescaled zeroes of $L(s,\chi^*)$ at height $T$ are well-modeled by the eigenangles of a uniformly sampled unitary matrix $\mathfrak{U}(N(\chi))$ of size $N(\chi) = \floor{\D(\chi,T)}$. 

We now assume the Generalized Riemann Hypothesis for all characters modulo $q$. Thus, the non-trivial zeros of $L(s,\chi^*)$ are of the form $1/2 + i\gamma(\chi)$ where $\gamma$ runs over a discrete (multi)set of real numbers depending on $\chi$. Now, consider the trignometric integral
\begin{equation*} \Ci(z) = -\int_z^\infty \frac{\cos w}{w} \,dw.\end{equation*}
If $E_1(z) = \int_z^\infty e^{-w} w^{-1}\, dw$ is the exponential integral as in Theorem~\ref{thm: eulerhadamard}, then $\Re\{E_1(ix)\} = - \Ci(|x|)$. 

Hence, using the definition of $\ZZ(s)$ and $Z_X(s,\chi)$,
\begin{multline*} \frac{1}{T}\int_T^{2T} \left|\ZZ\left(\tfrac{1}{2}+it\right)\right|^{2}\,dt = \frac{1}{T}\int_T^{2T} \prod_{\chi} \left|Z_X\left(\tfrac{1}{2}+it,\chi\right)\right|^{2\ell_\chi} \,dt \\{}= \frac{1}{T}\int_T^{2T} \prod_{\chi}\prod_{\gamma(\chi)} \exp\left(2\ell_\chi \int_1^e u(y) \Ci(|t - \gamma(\chi)|\log y\log X)\right) \,dy\,dt,  \end{multline*} 
where $u(y)$ is a non-negative function of mass $1$ supported in $[e^{1-1/X},e]$, as in Theorem~\ref{thm: eulerhadamard}, and we have used GRH. Now, following \cite[Equation~4.8]{heap2013hybriddedekind}, if we define $\phi(m,\theta)$ by, 
\begin{equation*} \phi(m,\theta) = \exp\left(2m \int_1^e u(y) \Ci(|\theta|\log y\log X)\right), \end{equation*}
then we see that the above integral is modeled by 
\begin{equation*} \E\left[\prod_\chi \prod_{n=1}^{N(\chi)} \phi(\ell_\chi,\theta_n(\chi))\right], \end{equation*}
where $\theta_n(\chi)$ is the $n$th eigenangle of $\mathfrak{U}(N(\chi))$. Here, the expectation is taken against the probability space from which the random matrices $\mathfrak{U}(N(\chi))$ are drawn. In particular, we make an independence assumption between the $\mathfrak{U}(N(\chi))$ for any finite set of distinct characters $\chi$, similar to \cite{heap2013hybriddedekind}. Thus, the expectation factorises, giving
\begin{equation*} \prod_\chi \E\left[\prod_{n=1}^{N(\chi)} \phi(\ell_\chi,\theta_n(\chi))\right]. \end{equation*}
We can now use \cite[Theorem 4]{gonek2007hybrid} (see also \cite[Equation~4.10]{heap2013hybriddedekind}), to compute the expectation inside. This gives us
\begin{equation*} \prod_\chi\left[ \frac{G(\ell_\chi+1)^2}{G(2\ell_\chi+1)}\left(\frac{N(\chi)}{e^\gamma \log X}\right)^{\ell_\chi^2}\left(1 + \bigO_{\ell_\chi}\left(\frac{1}{\log X}\right)\right)\right]. \end{equation*}
Finally, recall that $N(\chi) \approx \log(\qchi T)$, completing the heuristic.

\section{Proof of Theorem~\ref{thm: smallk}} \label{sec: smallk}

We begin this section by observing that to prove Theorem~\ref{thm: smallk} for $|\vg{\ell}| = 1$, it suffices to verify Conjecture~\ref{conj: randommatrices} for $|\vg{\ell}| = 1$. To see this note that $|\vg{\ell}| = 1$ is the same as $\vg{\ell} = \vg{\delta}^\chi$. Now, it is well-known (see, for example, Lemma~\ref{lem: twisted2ndmom}) that for a fixed $q$, 
\begin{equation*} \frac{1}{T}\int_T^{2T} \left|L\left(\tfrac{1}{2}+it,\chi\right)\right|^2\,dt \sim \frac{\varphi(q)}{q} \log T. \end{equation*} 
Further, putting $\vg{\ell} = \vg{\delta}^\chi$ in Theorem~\ref{thm: eulerproductmoment} gives 
\begin{equation*} \frac{1}{T}\int_T^{2T} \left|P_X\left(\tfrac{1}{2}+it,\chi\right)\right|^2 \,dt \sim \frac{\varphi(q)}{q}(e^\gamma \log X), \end{equation*}
provided that $q^2 < X \ll_\epsilon (\log T)^{2-\epsilon}$. 
Finally, Conjecture~\ref{conj: randommatrices} for $\vg{\ell} = \vg{\delta}^\chi$ states that for $X,T \to \infty$ with $X \ll_\epsilon (\log T)^{2-\epsilon}$, 
\begin{equation} \frac{1}{T}\int_T^{2T} \left|Z_X\left(\tfrac{1}{2}+it,\chi\right)\right|^2\,dt \sim \frac{\log \qchi T }{e^\gamma \log X}. \label{eqn: splitting} \end{equation}
Thus, we see that if we can prove (\ref{eqn: splitting}), then Theorem~\ref{thm: smallk} follows. 

Our first step towards proving (\ref{eqn: splitting}) is the following lemma which is a straightforward corollary of Lemma~\ref{lem: shorteulerproduct}:

\begin{lemma}

Let $\vg{\ell}$ be a tuple of nonnegative integers indexed by characters modulo $q$ such that $|\vg{\ell}| = \sum_\chi \ell_\chi = k$, define
\begin{equation*} Q_X(s,\chi) = \prod_{p\leqs \sqrt{X}} \left(1 - \frac{\chi(p)}{p^s}\right) \prod_{\sqrt{X} <p \leqs X} \left(1 -\frac{\chi(p)}{p^s} + \frac{\chi(p)^2}{2p^{2s}}\right), \end{equation*}
and define
\begin{equation*} \QQ(s) = \prod_{\chi} Q_X(s,\chi)^{\ell_\chi}. \end{equation*}
Then, uniformly for $\sigma \geqs 1/2$ and $X > q^2$,
\begin{equation*} \left[\PP(s)\right]^{-1} = \QQ(s)\left(1 + \bigO_k\left(\frac{1}{\log X}\right)\right) \end{equation*}

\end{lemma}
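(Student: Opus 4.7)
My plan is to prove this lemma by a direct multiplicative comparison of factors, using Lemma~\ref{lem: shorteulerproduct} to reduce to a statement about $\PS(s)$. Since Lemma~\ref{lem: shorteulerproduct} gives $\PP(s) = \PS(s)(1 + \bigO_k(1/\log X))$, and since inverting a multiplicative error of size $\bigO(1/\log X)$ preserves that error, it suffices to show
\begin{equation*} [\PS(s)]^{-1} = \QQ(s)\left(1 + \bigO_k\left(\frac{1}{\log X}\right)\right) \end{equation*}
uniformly for $\sigma \geqs 1/2$ and $X > q^2$. In fact I expect a much better error than $1/\log X$ here; the $1/\log X$ error comes from the first reduction.

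For the reduction to $\QQ(s)$, I would work one character at a time and compare $[P^*_X(s,\chi)]^{-1}$ factor-by-factor with $Q_X(s,\chi)$. For primes $p \leqs \sqrt{X}$, both expressions contain exactly the same factor $(1 - \chi(p)/p^s)$, so these contribute identically. For $\sqrt{X} < p \leqs X$, the factor in $[P^*_X(s,\chi)]^{-1}$ is
\begin{equation*} \left(1 - \frac{\chi(p)}{p^s}\right)\left(1 + \frac{\chi(p)^2}{2p^{2s}}\right) = 1 - \frac{\chi(p)}{p^s} + \frac{\chi(p)^2}{2p^{2s}} - \frac{\chi(p)^3}{2p^{3s}}, \end{equation*}
which differs from the corresponding factor of $Q_X(s,\chi)$ only by the term $-\chi(p)^3/(2p^{3s})$. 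Taking the ratio and using that $|\chi(p)| \leqs 1$ together with $\sigma \geqs 1/2$, the local ratio at each such $p$ is $1 + \bigO(p^{-3\sigma}) = 1 + \bigO(p^{-3/2})$.

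Taking the product over $\sqrt{X} < p \leqs X$ and using the prime number theorem to estimate $\sum_{p > \sqrt{X}} p^{-3/2} \ll X^{-1/4}/\log X$, I obtain $[P^*_X(s,\chi)]^{-1}/Q_X(s,\chi) = 1 + \bigO(X^{-1/4}/\log X)$. Raising to the power $\ell_\chi$ and taking the product over all Dirichlet characters $\chi$ modulo $q$ (a product of length $\varphi(q)$, with the exponents summing to $k$) yields
\begin{equation*} [\PS(s)]^{-1} = \QQ(s)\left(1 + \bigO_k\left(\frac{X^{-1/4}}{\log X}\right)\right), \end{equation*}
which is far stronger than needed. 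Combining with Lemma~\ref{lem: shorteulerproduct} completes the proof.

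There is no real obstacle here; the argument is entirely bookkeeping. The only subtlety is verifying that the error bound holds uniformly in $\sigma \geqs 1/2$, but this is immediate because the worst case $\sigma = 1/2$ still gives an absolutely convergent tail $\sum p^{-3/2}$. Note that the auxiliary factor $(1 + \chi(p)^2/(2p^{2s}))^{-1}$ built into the definition of $P^*_X(s,\chi)$ was chosen precisely so that this comparison works at the level of $p^{-2s}$; if we had compared $Q_X(s,\chi)$ directly with $\prod_{p \leqs X} (1 - \chi(p)/p^s)$, the discrepancy would already appear at the $p^{-2\sigma}$ level and the argument would fail at $\sigma = 1/2$.
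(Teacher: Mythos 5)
Your proof is correct and follows essentially the same route as the paper's: reduce via Lemma~\ref{lem: shorteulerproduct} to comparing $P^*_X(s,\chi)$ against $Q_X(s,\chi)$, observe exact cancellation for $p \leqs \sqrt{X}$ and a discrepancy of size $O(p^{-3\sigma})$ for $\sqrt{X} < p \leqs X$, and multiply over primes and characters. The only cosmetic difference is that the paper writes the comparison as $P_X(s,\chi)Q_X(s,\chi) = 1 + O(1/\log X)$ rather than as a ratio, and it does not bother recording the sharper $X^{-1/4}/\log X$ bound for the local product since the $1/\log X$ loss from Lemma~\ref{lem: shorteulerproduct} dominates anyway.
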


\begin{proof}

Clearly it suffices to restrict ourselves to $\vg{\ell} = \vg{\delta}^\chi$. Then, by Lemma~\ref{lem: shorteulerproduct},
\begin{equation*} \begin{split} P_X(s,\chi)Q_X(s,\chi) &{}= P_X^*(s,\chi)Q_X(s,\chi)\left(1+\bigO\left(\frac{1}{\log X}\right)\right) \\ &{} = \left(1+\bigO\left(\frac{1}{\log X}\right)\right) \prod_{\sqrt{X} < p \leqs X}\left(1 + \bigO\left(\frac{1}{p^{3\sigma}}\right)\right) \\&{} = 1 + \bigO\left(\frac{1}{\log X}\right), \end{split} \end{equation*} 
as desired.

\end{proof}

In view of the previous lemma and Theorem~\ref{thm: eulerhadamard}, to prove (\ref{eqn: splitting}) we want to show
\begin{equation*} \frac{1}{T}\int_T^{2T} \left|L\left(\tfrac{1}{2}+it,\chi\right) Q_X\left(\tfrac{1}{2}+it,\chi\right)\right|^2\,dt \sim \frac{\log \qchi T}{e^\gamma \log X}. \end{equation*}
Furthermore, we can assume without loss of generality that $\chi$ is primitive. To see this, let $\chi^*$ be the Dirichlet character modulo $\qchi$ which induces $\chi$. Then, $L(s,\chi)$ and $L(s,\chi^*)$ differ only by local factors corresponding to primes $p$ dividing $q$ but not dividing $\qchi$ and similarly for $X > q^2$, $Q_X(s,\chi)$ and $Q_X(s,\chi^*)$ also differ only by local factors corresponding to such $p$. In particular, we see that on multiplying these local factors cancel out, giving $L(s,\chi)Q_X(s,\chi) = L(s,\chi^*)Q_X(s,\chi^*)$. 

Thus, for $\chi$ primitive, we want to show that 
\begin{equation*} \frac{1}{T}\int_T^{2T} \left|L\left(\tfrac{1}{2}+it,\chi\right) Q_X\left(\tfrac{1}{2}+it,\chi\right)\right|^2\,dt \sim \frac{\log qT}{e^\gamma \log X}. \end{equation*}
To evaluate a mean square like this, we need a second moment asymptotic for a Dirichlet $L$-function twisted by a short Dirichlet polynomial. We use one proved by Wu \cite{wu2019twisted}:

\begin{lemma} \label{lem: twisted2ndmom}

Let $\chi$ be a primitive Dirichlet character modulo $q$ with $\log q = o(\log T)$, let $\theta>0$ be a parameter, and let $b(n)$ be an arithmetic function satisfying $b(n) \ll_\epsilon n^\epsilon$ for all $\epsilon>0$. Further, let
\begin{equation*} B_\theta(s,\chi) = \sum_{n\leqs T^\theta} \frac{\chi(n)b(n)}{n^s}, \end{equation*}
\begin{equation*} M_\theta(T;\chi,b) = \frac{1}{T}\int_T^{2T} \left|L\left(\tfrac{1}{2}+it,\chi\right)B_\theta\left(\tfrac{1}{2}+it,\chi\right)\right|^2\,dt, \end{equation*}
and
\begin{equation*} M'_\theta(T;\chi,b) = \frac{\varphi(q)}{q} \sum_{\substack{m,n\leqs T^\theta\\(mn,q)=1}} \frac{b(m)\ov{b(n)}}{[m,n]}\left(\log \frac{qT(m,n)^2}{2\pi mn} + C + \sum_{p\mid q} \frac{\log p}{p-1}\right),\end{equation*}
with $C = 2\gamma - 1 + 2\log 2$. Then,
\begin{equation*} M_\theta(T;\chi,b) = M_\theta'(T;\chi,b) + \bigO(T^{-\varepsilon_\theta}) \end{equation*}
where the parameter $\varepsilon_\theta$ depends on $\theta$, and $\varepsilon_\theta > 0$ when $\theta < 17/33$.
\end{lemma}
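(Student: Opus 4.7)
The plan is to expand the square, reduce the problem to a shifted second moment of $L(s,\chi)$, extract a main term from the diagonal, and control the off-diagonal via known estimates for averages of Kloosterman sums.

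First I would expand $|B_\theta(1/2+it,\chi)|^2$ and interchange the integration and summation to obtain
\begin{equation*}
M_\theta(T;\chi,b) = \frac{1}{T}\sum_{\substack{m,n\leqs T^\theta\\(mn,q)=1}} \frac{\chi(m)\ov{\chi(n)}\,b(m)\ov{b(n)}}{\sqrt{mn}}\,I(m,n;T),
\end{equation*}
where $I(m,n;T) = \int_T^{2T}\left|L(1/2+it,\chi)\right|^2 (n/m)^{it}\,dt$. Everything hinges on obtaining an asymptotic for $I(m,n;T)$ uniform in $(m,n)$ with $mn \leqs T^{2\theta}$ and carrying a power savings error.

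Second, to evaluate $I(m,n;T)$ I would apply an approximate functional equation to $L(1/2+it,\chi)L(1/2-it,\ov\chi)$, writing it as a smoothly truncated double Dirichlet series of total length roughly $qT$ plus its functional-equation dual. Substituting into $I(m,n;T)$ and integrating in $t$ then exploits stationary phase: the oscillatory factor $(bn/am)^{it}$ is essentially stationary exactly when $am = bn$. The diagonal $am = bn$ produces the main term, and parametrizing solutions as $a = (n/(m,n))d$, $b = (m/(m,n))d$ and summing over $d$ yields the logarithmic polynomial appearing in $M'_\theta$. The constant $C = 2\gamma - 1 + 2\log 2$ emerges from the logarithmic derivative of the Gamma factor at $s = 1/2$ together with the asymptotics of the smoothing weight, while the sum $\sum_{p\mid q} \log p/(p-1)$ records the local factors at the primes dividing $q$ in the corresponding Euler product computation.

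Third, the off-diagonal contribution $am \ne bn$ must be shown to be $\bigO(T^{1-\varepsilon_\theta})$. After a further application of the functional equation, or equivalently a Voronoi-type summation in the dual variable, these terms can be expressed as smoothly weighted sums of Kloosterman sums $S(\pm am, \pm bn;\,cq)$ over moduli $c$. The main obstacle, and the technical heart of Wu's argument, is to push the admissible range of $\theta$ as far as possible: the pointwise Weil bound $|S(a,b;c)| \leqs (a,b,c)^{1/2} c^{1/2} d(c)$ is only strong enough to give power savings for $\theta$ well below $1/2$, so reaching $\theta < 17/33$ requires exploiting cancellation across the modulus by invoking the Deshouillers--Iwaniec spectral large sieve for sums of Kloosterman sums, after separating the variables $m,n$ via smooth partitions of unity or Mellin inversion.

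Finally, combining the diagonal main term with the power-savings off-diagonal bound and dividing by $T$ yields $M_\theta(T;\chi,b) = M'_\theta(T;\chi,b) + \bigO(T^{-\varepsilon_\theta})$; the hypothesis $b(n)\ll_\epsilon n^\epsilon$ is used only to control the sizes of $b(m)\ov{b(n)}/\sqrt{mn}$ in both the main term and the off-diagonal estimate, while $\log q = o(\log T)$ ensures that the AFE truncation lengths sit within $T^{1+o(1)}$. Since the lemma is quoted directly from Wu \cite{wu2019twisted}, the above is intended as a roadmap for the structural shape of his argument rather than a self-contained reproof.
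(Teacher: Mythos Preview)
Your proposal is correct: the paper's proof of this lemma is simply the one-line citation ``This is contained in \cite[Theorem 1.1]{wu2019twisted},'' so there is nothing to compare beyond the fact that you have additionally sketched the structure of Wu's argument. Your roadmap (approximate functional equation, diagonal extraction, and off-diagonal treatment via Kloosterman sums and the Deshouillers--Iwaniec spectral large sieve to reach $\theta < 17/33$) accurately reflects the method underlying the cited result.
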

\begin{proof} This is contained in \cite[Theorem 1.1]{wu2019twisted}. \end{proof}

Now, writing $Q_X(s,\chi)$ as a Dirichlet series, we have
\begin{equation*} Q_X\left(s,\chi\right) = \sum_{n=1}^\infty \frac{\betn(n)}{n^{s}}, \end{equation*}
where $\betn(n)$ is multiplicative and supported on $\Sm(X)$, $|\betn(n)|\ll d(n)$, and for $n \in \Sm(\sqrt{X})$ and $p \in \Sm(X)$, we have $\betn(n) = \mu(n)\chi(n)$ and $\betn(p) = \mu(p)\chi(p)$. 

Now, further, define $Q_X(s) = Q_X(s,1)$ where $1$ here is the sole character modulo $1$, and let
\begin{equation*} Q_X(s) = \sum_{n=1}^\infty \frac{\alphn(n)}{n^s}. \end{equation*}
Then we see that $\alphn(n)$ as defined above is the same as in \cite[Section 5]{gonek2007hybrid}, and further it is immediate that for $n \in \Sm(X)$, $\betn(n) = \alphn(n)\chi(n)$. 

Mimicking the argument for (\ref{eqn: pishortdir}), one can show that
\begin{equation}\label{eqn: qshortpoly}\begin{split} Q_X\left(\tfrac{1}{2}+it,\chi\right) &{}= \sum_{\substack{n\leqs T^\theta\\n\in \Sm(X)}} \frac{\betn(n)}{n^{1/2+it}} + \bigO_{\epsilon,\theta}(T^{-\theta\epsilon/10})\\ &{}= \sum_{\substack{n\leqs T^\theta\\n\in \Sm(X)}} \frac{\alphn(n)\chi(n)}{n^{1/2+it}} + \bigO_{\epsilon,\theta}(T^{-\theta\epsilon/10}), \end{split}\end{equation} 
for $\epsilon>0$ small enough.

Putting $\theta = 1/20$, and $b(n) = \alphn(n)$ in Lemma~\ref{lem: twisted2ndmom}, we get that
\begin{equation} M(T;\chi,\alphn) = M'(T;\chi,\alphn) + \bigO(T^{-\varepsilon}), \label{eqn: M=M'}\end{equation}
with $M = M_{\frac{1}{20}}$, $M' = M'_{\frac{1}{20}}$ and $\varepsilon = \varepsilon_{\frac{1}{20}}>0$. 

We first compute the main term $M'(T;\chi,\alphn)$. Since, $[m,n](m,n) = mn$, $M'(\chi,\alphn,T)$ is
\begin{equation*} \frac{\varphi(q)}{q}\sum_{\substack{m,n\leqs T^{1/20}\\m,n\in \Sm(X)}} \frac{\alphn(m)}{m}\frac{\alphn(n)}{n} (m,n) \left\{\log\left(\frac{qT(m,n)^2}{2\pi mn}\right) + \bigO_q(1)\right\}. \end{equation*}

Now, note that any estimates \cite[pp. 530-531] {gonek2007hybrid} can be applied to the above, provided we add the restrictions $(m,q) = (n,q) = (g,q) = 1$ to the sums, and replace $\log T$ with $\log qT$. In particular, following the argument for \cite[Equation~34]{gonek2007hybrid}, we conclude that $M'(T;\chi,\alphn)$ is
\begin{equation*}  \frac{\varphi(q)\log qT}{q}\sum_{\substack{m,n\leqs T^{1/20}\\m,n\in \Sm(X)}} \frac{\alphn(m)}{m}\frac{\alphn(n)}{n}(m,n) + \bigO_q((\log X)^{10}). \end{equation*}
Since $\sum_{\substack{g \mid m \\ g \mid n}} \varphi(g) = (m,n)$, the inner sum is 
\begin{equation*} \sum_{\substack{m,n\leqs T^{1/20}\\m,n\in \Sm(X)}} \frac{\alphn(m)}{m}\frac{\alphn(n)}{n}\sum_{\substack{g \mid m \\ g \mid n}} \varphi(g) = \sum_{\substack{g\leqs T^{1/20} \\ g \in \Sm(X)}} \frac{\varphi(g)}{g^2}\Bigg(\sum_{\substack{n\leqs \frac{T^{1/20}}{g}\\n \in \Sm(X))}} \frac{\alphn(gn)}{n}\Bigg)^2 \end{equation*}
Following the argument for \cite[Equation~37]{gonek2007hybrid} here, we can extend both the summations above to infinity to get that $M'(T;\chi,\alphn)$ is
\begin{equation*} \frac{\varphi(q)\log qT}{q}\sum_{\substack{g \in \Sm(X)}} \frac{\varphi(g)}{g^2}\left(\sum_{\substack{n \in \Sm(X))}} \frac{\alphn(gn)}{n}\right)^2 + \bigO_q((\log X)^{10}). \end{equation*}
By the muliplicativity of $\alphn$ and $\varphi$, the sum here can be written as an Euler product
\begin{equation*} \prod_{\substack{p \leqs X\\p \nmid q}} \left(\sum_{r,j,k\geqs 0} \frac{\varphi(p^r)\alphn(p^{r+j})\alphn(p^{r+k})}{p^{2r+j+k}}\right).\end{equation*}

Now, recalling that $\alphn(n) = \mu(n)$ if $n \in \Sm(\sqrt{X})$, $\alphn(p) = \mu(p)$ for all $p \leqs X$ and $\alphn(n) \ll d(n)$ for all $n \in \Sm(X)$, we get that this product is equal to
\begin{equation*} \begin{split}\prod_{\substack{p\leqs \sqrt{X}\\p \nmid q}}&\left(1 - \frac{1}{p}\right)\prod_{\substack{\sqrt{X}<p\leqs X\\p\nmid q}}\left(1 - \frac{1}{p} +\bigO\left(\frac{1}{p^2}\right)\right) \\&{}= \frac{q}{\varphi(q)} \prod_{p\leqs X}\left(1 - \frac{1}{p}\right)\prod_{\sqrt{X}<p\leqs X}\left(1 + \bigO\left(\frac{1}{p^2}\right)\right)\\&{} = \frac{q}{\varphi(q)}\cdot\frac{1}{e^\gamma\log X}\left(1 + \bigO\left(\frac{1}{\log X}\right)\right).\end{split} \end{equation*}
Thus, since $\log X \ll \log\log T$, we see that, in fact
\begin{equation} \label{eqn: maintermsplitting} M'(T;\chi,\alphn) = \frac{\log qT}{e^\gamma\log X}\left(1 + \bigO\left(\frac{1}{\log X}\right)\right) \end{equation} 

Writing (\ref{eqn: qshortpoly}) with $\theta = 1/20$ as $Q_X(1/2+it,\chi) = \QS + \bigO(T^{-\epsilon/200})$,

\begin{multline*} \frac{1}{T}\int_T^{2T}\left|L\left(\tfrac{1}{2}+it,\chi\right)Q_X\left(\tfrac{1}{2}+it,\chi\right)\right|^2 \,dt \\{} = \frac{1}{T}\int_T^{2T}\left|L\left(\tfrac{1}{2}+it,\chi\right)\QS\right|^2\,dt \\{}+\bigO\left(\frac{1}{T^{1+\tfrac{\epsilon}{200}}}\int_T^{2T}\left|L\left(\tfrac{1}{2}+it,\chi\right)^2\QS\right|\,dt\right) \\{}+ \bigO\left(\frac{1}{T^{1+\tfrac{\epsilon}{100}}}\int_T^{2T}\left|L\left(\tfrac{1}{2}+it,\chi\right)\right|^2\,dt\right). \end{multline*}
The first term here is $M'(T;\chi,\alphn) + \bigO(T^{-\varepsilon})$. The last term is $\ll_q T^{-\epsilon/200}$ since the second moment of $L(s,\chi)$ is $\ll_q T\log T$. Finally, by Cauchy-Schwarz and (\ref{eqn: maintermsplitting}), the second term is

\begin{equation*} \begin{split} &{}\ll \frac{1}{T^{1+\tfrac{\epsilon}{200}}}\left(\int_T^{2T}\left|L\left(\tfrac{1}{2}+it,\chi\right)\QS\right|^2\,dt\int_T^{2T}\left|L\left(\tfrac{1}{2}+it,\chi\right)\right|^2\,dt\right)^{1/2} \\&{}\ll \frac{1}{T^{1+\tfrac{\epsilon}{200}}}\left(\frac{T^2(\log T)^2}{\log X}\right)^{1/2} \ll T^{-\tfrac{\epsilon}{400}}. \end{split} \end{equation*}

Putting these estimates together with (\ref{eqn: maintermsplitting}), we get that
\begin{equation*} \begin{split} \frac{1}{T}\int_T^{2T}&\left|L\left(\tfrac{1}{2}+it,\chi\right)Q_X\left(\tfrac{1}{2}+it,\chi\right)\right|^2 \,dt \\&{}= M'(T;\chi,\alphn) + \bigO(T^{-\vartheta})\\&{} = \frac{\log qT}{e^\gamma \log X}\left(1 + \bigO\left(\frac{1}{\log X}\right)\right), \end{split} \end{equation*} 
for some $\vartheta = \vartheta(\epsilon,\varepsilon_{\frac{1}{20}}) > 0$ completing the proof of Theorem~\ref{thm: smallk}.
 
\section{Evidence for Conjecture~\ref{conj: hurwitzmoments}} \label{sec: hurwitzmoments}

In this section, we will discuss the proofs of Theorem~\ref{thm: rationalfourthmoment}, Theorem~\ref{thm: L-asymptotic}, Theorem~\ref{thm: constant} and Theorem~\ref{thm: upperandlower}. We will also show that in all cases where our conjectures are known, the constants match up with our predictions. This will complete the presentation of our evidence for Conjecture~\ref{conj: hurwitzmoments}.

In several results here, we must assume one of the following two hypotheses:
\begin{itemize}
\item The Generalized Riemann Hypothesis holds for $L(s,\chi)$ for every character $\chi$ modulo $q$. We denote this by $\GRH$.
\item The mean squares of $\LL(s)$ on the critical line for all $|\vg{\ell}| = k$ satisfy Conjecture~\ref{conj: splitting} and Conjecture~\ref{conj: randommatrices}. We denote this by $\Sp$, for splitting conjecture.
\end{itemize}
We introduce the above shorthand for convenience, as many of the results in this section will hold under either hypothesis.

\subsection{The Asymptotic Formula for Moments of Products of Dirichlet \texorpdfstring{$L$}{L}-functions}
In this subsection, we will prove Theorem~\ref{thm: L-asymptotic}. This is straightforward. By Theorem~\ref{thm: eulerproductmoment}, we get that assuming we hold $q, \vg{\ell},$ and $\epsilon$ fixed, and let $X, T \to \infty$ with $X \ll_\epsilon (\log T)^{2-\epsilon}$,
\begin{equation} \label{eqn: plmoment} \frac{1}{T}\int_T^{2T} |\PP(s)|^2\,dt = (e^\gamma\log X)^\lambda \prod_{p}\left\{\left(1 - \frac{1}{p}\right)^{\lambda} \sum_{m=0}^\infty \frac{|\dl(p^m)|^2}{p^m} \right\}.\end{equation}
Further, since we are assuming Conjecture~\ref{conj: randommatrices} for $\vg{\ell}$, we get that under the same conditions as before, 
\begin{equation} \label{eqn: zlmoment}\begin{split} \frac{1}{T}\int_T^{2T} |\ZZ(s)|^{2}\,dt &{}\sim \prod_\chi \left[\frac{G(\ell_\chi+1)^2}{G(2\ell_\chi +1)} \left(\frac{\log \qchi T}{e^\gamma \log X}\right)^{\ell_\chi^2}\right]\\& = \frac{1}{(e^\gamma\log X)^\lambda} \prod_\chi\left[ \frac{G(\ell_\chi+1)^2}{G(2\ell_\chi +1)} \left(\log \qchi T\right)^{\ell_\chi^2}\right].\end{split}\end{equation}
Finally, since we are assuming that Conjecture~\ref{conj: splitting} is true for $\vg{\ell}$, we get that for $X, T$ as before,
\begin{equation*} \frac{1}{T}\int_T^{2T}\left|\LL(s)\right|^2\,dt \sim \left(\frac{1}{T}\int_T^{2T}\left|\PP(s)\right|^2\,dt\right) \times \left(\frac{1}{T}\int_T^{2T} \left|\ZZ(s)\right|^2\,dt \right). \end{equation*}
Multiplying (\ref{eqn: plmoment}) and (\ref{eqn: zlmoment}) and inserting above, we see that the $(e^\gamma \log X)^\lambda$ factors cancel out, and the constants combine to become $\clq$, giving
\begin{equation*} \frac{1}{T}\int_T^{2T}\left|\LL(s)\right|^2\,dt \sim \clq \prod_\chi \bigg(\log \qchi T\bigg)^{\ell_\chi^2}, \end{equation*}
as desired. 

\subsection{Computing \texorpdfstring{$c_k(\alpha)$}{ck(a)}} \label{subs: computing}

The main result of this subsection is the following proposition which is one way to make the heuristic in (\ref{eqn: heuristicanswer}) rigorous:

\begin{proposition}

Let $M_k(T;\alpha)$ be as in (\ref{def:Mkalpha}), and for any Dirichlet character $\chi$ modulo $q$, define $M_k(T;\chi)$ by
\begin{equation*} \label{prop: mainprop} M_k(T;\chi) = \int_{T}^{2T} \left|L\left(\tfrac{1}{2}+it,\chi\right)\right|^{2k}\,dt. \end{equation*}
If either $\GRH$ or $\Sp$ holds, and $\alpha = a/q$ with $(a,q) = 1$, then
\begin{equation*} M_k(T;\alpha) = \frac{q^k}{\varphi(q)^{2k}} \sum_\chi M_k(T;\chi) + o_{q,k}(T(\log T)^{k^2}). \end{equation*}
If $k = 1$ or $k = 2$, then the above can be proved unconditionally.
\end{proposition}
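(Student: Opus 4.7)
The approach is to substitute the decomposition (\ref{eqn: hurwitzdecomp}) into the definition of $M_k(T;\alpha)$ and integrate term-by-term in $t$ over $[T,2T]$ at $s = 1/2+it$, obtaining
\begin{equation*}
M_k(T;\alpha) = \frac{q^k}{\varphi(q)^{2k}} \sum_{\substack{|\vg{\ell}^{(1)}|=k \\ |\vg{\ell}^{(2)}|=k}} \binom{k}{\vg{\ell}^{(1)}}\binom{k}{\vg{\ell}^{(2)}} s(a;\vg{\ell}^{(1)},\vg{\ell}^{(2)})\, I(\vg{\ell}^{(1)},\vg{\ell}^{(2)}),
\end{equation*}
where $I(\vg{\ell}^{(1)},\vg{\ell}^{(2)}) = \int_T^{2T} \LLa(s) \ov{\LLb(s)}\,dt$. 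I would then partition the pairs into three classes: the \emph{pure diagonals} $\vg{\ell}^{(1)} = \vg{\ell}^{(2)} = k\vg{\delta}^\chi$ for some single character $\chi$; the remaining \emph{mixed diagonals} with $\vg{\ell}^{(1)} = \vg{\ell}^{(2)}$ not concentrated on a single character; and the \emph{off-diagonals} with $\vg{\ell}^{(1)} \neq \vg{\ell}^{(2)}$. Each pure diagonal has multinomial coefficient $1$, phase $s(a;\cdot,\cdot) = 1$, and integrand $|L(1/2+it,\chi)|^{2k}$, so together they yield exactly the proposed main term $\tfrac{q^k}{\varphi(q)^{2k}} \sum_\chi M_k(T;\chi)$.

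For each mixed diagonal, $\lambda(\vg{\ell}) = \sum_\chi \ell_\chi^2$ is strictly less than $k^2$: $k^2 = (\sum_\chi \ell_\chi)^2 \leq |\operatorname{supp}(\vg{\ell})| \cdot \lambda(\vg{\ell})$ with equality only when $\vg{\ell}$ is concentrated on a single character. Under $\Sp$, Theorem~\ref{thm: L-asymptotic} gives $I(\vg{\ell},\vg{\ell}) \asymp_{k,q} T(\log T)^{\lambda(\vg{\ell})} = o_{q,k}(T(\log T)^{k^2})$. Under $\GRH$, the analogous upper bound $I(\vg{\ell},\vg{\ell}) \ll_{k,q,\epsilon} T(\log T)^{\lambda(\vg{\ell}) + \epsilon}$ follows from adapting the Soundararajan--Harper moment method to $\LL(s)$: the Dirichlet coefficient of $\log \LL$ at a prime $p$ is $\dl(p)$, and $|\dl(p)|^2$ averages to $\lambda(\vg{\ell})$ over primes coprime to $q$ by orthogonality of characters. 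Either way, every mixed diagonal contributes $o(T(\log T)^{k^2})$.

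For off-diagonal pairs, Cauchy-Schwarz gives
\begin{equation*}
|I(\vg{\ell}^{(1)},\vg{\ell}^{(2)})| \leq I(\vg{\ell}^{(1)},\vg{\ell}^{(1)})^{1/2}\, I(\vg{\ell}^{(2)},\vg{\ell}^{(2)})^{1/2} \ll T(\log T)^{(\lambda^{(1)}+\lambda^{(2)})/2 + \epsilon},
\end{equation*}
which is $o(T(\log T)^{k^2})$ unless both $\vg{\ell}^{(j)}$ attain $\lambda^{(j)} = k^2$, i.e.\ unless $\vg{\ell}^{(j)} = k\vg{\delta}^{\chi_j}$ for distinct characters $\chi_1 \neq \chi_2$. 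The whole argument therefore reduces to showing, for every pair $\chi \neq \nu$,
\begin{equation*}
J_k(\chi,\nu) := \int_T^{2T} L(1/2+it,\chi)^k\, \ov{L(1/2+it,\nu)^k}\, dt = o_{q,k}(T(\log T)^{k^2}).
\end{equation*}

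This cross-moment estimate is the main obstacle, precisely because Cauchy-Schwarz just barely fails to give the required saving. For $k = 1$, $J_1(\chi,\nu)$ is handled unconditionally by a standard Dirichlet series mean-value calculation: the diagonal $m=n$ contributes $T\sum_{(m,q)=1} (\chi\ov{\nu})(m)/m = O_q(T)$, which is bounded because $\chi\ov{\nu}$ is non-principal, and the off-diagonal is controlled by the classical mean-value theorem for Dirichlet polynomials. For $k = 2$, $J_2(\chi,\nu)$ is a cross fourth moment of two Dirichlet $L$-functions, closely related to (\ref{eqn: twistedfourthmomentgeneral}); a direct extension of the methods of \cite{heap2014twisted} yields $J_2(\chi,\nu) \ll_q T(\log T)^3$, again unconditionally. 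For general $k$ under $\Sp$ or $\GRH$, the plan is to use the functional equation on the critical line to write $\ov{L(1/2+it,\nu)^k} = \Psi_\nu(1/2+it)^k L(1/2+it,\nu)^k$, where $\Psi_\nu$ is the unimodular gamma factor with $|\Psi_\nu'/\Psi_\nu| \gg \log T$, and then to combine the mean-square asymptotic for $\mathcal{L}^{k\vg{\delta}^\chi + k\vg{\delta}^\nu}$ from Theorem~\ref{thm: L-asymptotic} (or the corresponding $\GRH$ upper bound) with a smoothed dyadic decomposition and integration by parts that exploits the stationary phase of $\Psi_\nu^k$. This should produce a saving of at least one factor of $\log T$ over the Cauchy-Schwarz bound, which is exactly what is needed. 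Making the phase-cancellation step rigorous in this generality is the principal technical difficulty.
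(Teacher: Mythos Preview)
Your overall architecture matches the paper's: expand via (\ref{eqn: hurwitzdecomp}), extract the pure diagonals $\vg{\ell}^{(1)}=\vg{\ell}^{(2)}=k\vg{\delta}^\chi$ as the main term, kill the mixed diagonals using $\lambda(\vg{\ell})<k^2$ together with either Theorem~\ref{thm: L-asymptotic} (under $\Sp$) or a GRH moment bound, and then use Cauchy--Schwarz on the off-diagonals. For the GRH input the paper simply quotes Milinovich--Turnage-Butterbaugh \cite{milinovich2014moments} rather than redoing Soundararajan--Harper, but that is the same idea. Your treatments of $k=1$ (classical mean-value) and unconditional $k=2$ (fourth-moment technology, the paper uses Topa\c{c}o\u{g}ullari \cite{topacogullari2020fourth}) are also in line with the paper.

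Where you diverge is precisely at the step you flag as ``the principal technical difficulty'': the cross-moment $J_k(\chi,\nu)$ for $k\geqs 2$ under $\GRH$ or $\Sp$. You apply Cauchy--Schwarz in the obvious way, get $T(\log T)^{k^2}$, and then propose to recover the missing saving via the functional equation and stationary phase of $\Psi_\nu^k$. The paper avoids all of this with a one-line regrouping: write
\[
L(s,\chi)^k\,\ov{L(s,\nu)}^k \;=\; \bigl[L(s,\chi)^{k-1}\,\ov{L(s,\nu)}\bigr]\cdot\bigl[L(s,\chi)\,\ov{L(s,\nu)}^{k-1}\bigr],
\]
and apply Cauchy--Schwarz to \emph{this} factorisation. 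Each factor has mean square $\int_T^{2T}|\LL(s)|^2\,dt$ with $\vg{\ell}=(k-1)\vg{\delta}^\chi+\vg{\delta}^\nu$ (respectively its swap), for which $\lambda(\vg{\ell})=(k-1)^2+1=k^2-2k+2<k^2$. The same moment bound you already used for the mixed diagonals then gives $J_k(\chi,\nu)\ll_{q,k,\epsilon} T(\log T)^{k^2-2k+2+\epsilon}$, and the ``obstacle'' disappears. So there is no genuine gap in your plan, but you have missed the trick that makes the hard step trivial; your phase-cancellation route is unnecessary and would be substantially more work to make rigorous.
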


We show that this proposition establishes Theorem~\ref{thm: constant}. Note that under the hypothesis of Theorem~\ref{thm: constant}, $\Sp$ holds, and hence so does the conclusion of Theorem~\ref{thm: L-asymptotic}. Thus, for a fixed $q,\chi$, and with $\vg{\ell} = k\vg{\delta}^\chi$, we get the asymptotic \begin{equation*} M_k(T;\chi) = (\clq +o_{q,k}(1)) T(\log T)^{k^2}. \end{equation*}
Thus, by Proposition~\ref{prop: mainprop},
\begin{equation*} M_k(T;\alpha) = \frac{q^k}{\varphi(q)^{2k}} \left(\sum_{\vg{\ell} = k\vg{\delta}^\chi} \clq\right) T(\log T)^{k^2} + o_{q,k}(T(\log T)^{k^2}), \end{equation*}
which establishes Theorem~\ref{thm: constant} with 
\begin{equation} c_k(\alpha) =  \frac{q^k}{\varphi(q)^{2k}} \left(\sum_{\vg{\ell} = k\vg{\delta}^\chi} \clq\right), \label{eqn: ckainchi}\end{equation}
where the sum runs over all tuples $\vg{\ell}$ of the form $k\vg{\delta}^\chi$ for some character $\chi$. It remains to simplify the constant. Note that for $\vg{\ell} = k\vg{\delta}^\chi$, $\dl(n) = \chi(n)d_k(n)$, where $d_k(n)$ is the usual divisor function. In particular, this means that $|\dl(p^m)|^2 = \chi_0(p^m)d_k(p^m)^2$, and hence $\clq$ depends only on the modulus of $\chi$. Further, $\lambda(\vg{\ell}) = k^2$. Thus,
\begin{equation*} \clq = \prod_{p}\left\{\left(1 - \frac{1}{p}\right)^{k^2}\sum_{m=0}^\infty \frac{\chi_0(p^m)d_k(p^m)^2}{p^m}\right\} \frac{G(k+1)}{G(2k +1)}, \end{equation*}
for every $\vg{\ell}$ appearing in the sum in (\ref{eqn: ckainchi}). We see that $\clq$ is the same as usual constant for $\zeta(s)$, $c_k = c_k(1)$ but with a slight change in the local factors in the Euler product corresponding to those primes $p$ which divide $q$. That is, 
\begin{equation} \begin{split} \clq = c_k \prod_{p\mid q}\left\{\sum_{m=0}^\infty \frac{d(p^m)^2}{p^m}\right\}^{-1}= c_k \prod_{p \mid q}\left\{\sum_{m=0}^\infty \binom{m+k-1}{k-1}^2 p^{-m}\right\}^{-1}. \end{split} \label{eqn: constantforchi}\end{equation}
Substituting this back into (\ref{eqn: ckainchi}),
\begin{equation*} \begin{split} c_k(\alpha) &{}= c_k \frac{q^k}{\varphi(q)^{2k-1}} \prod_{p\mid q} \left\{\sum_{m=0}^\infty \binom{m+k-1}{k-1}^2 p^{-m}\right\}^{-1}, \end{split} \end{equation*}
as desired. This completes the proof of Theorem~\ref{thm: constant} from Proposition~\ref{prop: mainprop}.

We now turn to the proof of Proposition~\ref{prop: mainprop}. By (\ref{eqn: hurwitzdecomp}), $M_k(T;\alpha)$ is equal to
\begin{equation} \label{eqn: sumoverl} \frac{q^{k}}{\varphi(q)^{2k}} \sum_{\substack{|\vg{\ell}^{(1)}| = k,\\|\vg{\ell}^{(2)}|=k}}\binom{k}{\vg{\ell}^{(1)}} \binom{k}{\vg{\ell}^{(2)}} \left[\prod_{\chi} \chi(a)^{\ell^{(2)}_{\chi} - \ell^{(1)}_{\chi}}\right] \int_T^{2T}\LLa(s)\ov{\LLb(s)}\,dt, \end{equation}
where $s = 1/2 + it$. We divide the terms in the sum into four types:
\begin{itemize}
\item The primary diagonal terms. These correspond to $\vg{\ell}^{(1)} = \vg{\ell}^{(2)} = k\vg{\delta}^{\chi}$ for some character $\chi$. For such terms, it is clear that $\binom{k}{\vg{\ell}^{(j)}} = 1$ and the integral devolves to $M_k(T;\chi)$. 
\item The secondary diagonal terms. These correspond to diagonal terms $\vg{\ell}^{(1)} = \vg{\ell}^{(2)}$ which are not main diagonal terms. Thus, $\vg{\ell} = \vg{\ell}^{(1)} = \vg{\ell}^{(2)} \neq k\vg{\delta}^\chi$ for every character $\chi$ modulo $q$. For such terms, the integral devolves to the mean square of $\LL(1/2+it)$ over $[T,2T]$. 
\item The major off-diagonal terms. These correspond to $\vg{\ell}^{(1)} = k\vg{\delta}^{\chi}$ and $\vg{\ell}^{(2)} = k\vg{\delta}^\nu$ for distinct characters $\chi,\nu$. For these terms, the integral devolves to $\int_T^{2T} L(s,\chi)^k \ov{L(s,\nu)}^k \,dt$. 
\item The minor off-diagonal terms. These correspond to any terms which are not of any of the above three forms. 
\end{itemize}

The primary diagonal terms clearly give rise to the  main term in Proposition~\ref{prop: mainprop}. We will show, through a series of lemmata, that all the other terms can be subsumed by the error term, thereby proving the proposition. 

The following lemma is a corollary of a result of Milinovich and Turnage-Butterbaugh \cite{milinovich2014moments}:

\begin{lemma} \label{lem: L-upperbound}

Suppose that either $\GRH$ or $\Sp$ holds, and that $\vg{\ell}$ is tuple of nonnegative integers indexed by the characters modulo $q$ satisfying $|\vg{\ell}| = k$. Then, for $\lambda(\vg{\ell}) = \sum_\chi \ell_\chi^2$ and any $\epsilon>0$, 
\begin{equation*} \int_T^{2T} \left|\LL\left(\tfrac{1}{2}+it\right)\right|^2 \,dt \ll_{q,k,\epsilon} T(\log T)^{\lambda + \epsilon}. \end{equation*}
In particular, if $\vg{\ell} \neq k \vg{\delta}^\chi$ for all characters $\chi$ modulo $q$, then
\begin{equation*} \int_T^{2T} \left|\LL\left(\tfrac{1}{2}+it\right)\right|^2 \,dt \ll_{q,k,\epsilon} T(\log T)^{k^2 - 1 + \epsilon}, \end{equation*}
and hence, the secondary diagonal terms in (\ref{eqn: sumoverl}) contribute at most $o_{q,k}(T(\log T)^{k^2})$ to the sum. 
\end{lemma}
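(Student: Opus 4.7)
The plan is to treat the two hypotheses separately and then deduce the second assertion from the first by a short combinatorial argument. Under $\Sp$ there is nothing to do beyond citing the previous theorem: Theorem~\ref{thm: L-asymptotic} gives the sharp asymptotic
\begin{equation*}
\frac{1}{T}\int_T^{2T}\left|\LL\left(\tfrac{1}{2}+it\right)\right|^2\,dt \sim \clq \prod_\chi (\log\qchi T)^{\ell_\chi^2},
\end{equation*}
and since each $\qchi \leqs q$ is bounded, this is $\ll_{q,k} (\log T)^\lambda$, well within the claimed bound of $(\log T)^{\lambda+\epsilon}$.

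Under $\GRH$, the plan is to invoke the upper bound of Milinovich and Turnage-Butterbaugh for moments of products of distinct primitive $L$-functions. Their theorem, applied on GRH to a product $\prod_j |L_j(\tfrac{1}{2}+it)|^{2k_j}$ of distinct primitive $L$-functions of degrees $d_j$, gives an upper bound of order $T(\log T)^{\sum d_j k_j^2 + \epsilon}$. To put $\LL(s)$ into this shape, first factor each imprimitive Dirichlet $L$-function via (\ref{eqn: limprimitive}), writing $L(s,\chi) = L(s,\chi^*)\prod_{p\mid q}(1-\chi^*(p)/p^s)$; the finite Euler factor is uniformly $\ll_q 1$ on the critical line. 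Because the map $\chi \mapsto \chi^*$ sending a character modulo $q$ to its primitive inducer is injective, we obtain
\begin{equation*}
|\LL(s)| \ll_{q,k} \prod_\chi |L(s,\chi^*)|^{\ell_\chi}
\end{equation*}
as a product over \emph{distinct} primitive characters. Applying the Milinovich--Turnage-Butterbaugh estimate with $k_j = \ell_\chi$ and $d_j = 1$ then yields the first bound.

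For the second assertion, the plan is to verify the elementary inequality that $\lambda \leqs k^2-2k+2$ whenever $\vg{\ell}\neq k\vg{\delta}^\chi$ for any $\chi$. If $|\vg{\ell}|=k$ and at least two entries of $\vg{\ell}$ are positive, then a straightforward convexity argument shows that $\sum_\chi \ell_\chi^2$ is maximized at a tuple of the form $(k-1)\vg{\delta}^\chi + \vg{\delta}^\nu$, which gives $\lambda \leqs (k-1)^2 + 1 = k^2 - 2k + 2$. The case $k=1$ is vacuous since the only tuple of weight one is $\vg{\delta}^\chi$ itself. For $k\geqs 2$ we conclude $\lambda \leqs k^2 - 2$, so by the first part
\begin{equation*}
\int_T^{2T} \left|\LL\left(\tfrac{1}{2}+it\right)\right|^2\,dt \ll_{q,k,\epsilon} T(\log T)^{\lambda+\epsilon} \ll T(\log T)^{k^2-1+\epsilon}
\end{equation*}
for $\epsilon < 1$. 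Finally, since the number of tuples $\vg{\ell}$ with $|\vg{\ell}|=k$ depends only on $q$ and $k$, summing over all secondary diagonal $\vg{\ell}$ in (\ref{eqn: sumoverl}) incurs only an $O_{q,k}(1)$ factor, so the total contribution is $\ll_{q,k,\epsilon} T(\log T)^{k^2-1+\epsilon} = o_{q,k}(T(\log T)^{k^2})$ once $\epsilon<1$.

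The main obstacle is essentially bibliographic: correctly extracting from Milinovich--Turnage-Butterbaugh the statement with the normalization appropriate to our setting (distinct primitive Dirichlet $L$-functions of degree $1$, integer exponents, integration over $[T,2T]$). Once cited correctly, the remainder is routine reduction to primitive characters and a short extremal computation for $\lambda$.
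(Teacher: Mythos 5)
Your proposal is correct and follows the paper's own route: under $\Sp$ the bound is immediate from Theorem~\ref{thm: L-asymptotic}, under $\GRH$ it comes from the Milinovich--Turnage-Butterbaugh moment bound for products of primitive $L$-functions (after a reduction to primitive characters which the paper leaves implicit), and the second assertion follows from an elementary bound on $\lambda$. The only cosmetic difference is that you prove the sharper extremal bound $\lambda \leqs k^2 - 2k + 2$ via convexity, whereas the paper simply observes that $\lambda$ is an integer strictly below $k^2$, hence $\lambda \leqs k^2 - 1$, which is all that is needed.
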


\begin{proof}
First, suppose that $\Sp$ holds. Then, the first inequality is trivally true due to Theorem~\ref{thm: L-asymptotic}. Alternatively, suppose that $\GRH$ holds. Then, the first inequality follows by applying \cite[Theorem 1.1]{milinovich2014moments} in the specific case where all the $L$-functions involved are Dirichlet $L$-functions. 

Now note that under the constraints $\ell_\chi \geqs 0$ and $\sum_\chi \ell_\chi = k$, we have that $\lambda = \sum_\chi \ell_\chi^2 \leqs k^2$ with equality if and only if the entire weight of $\vg{\ell}$ is concentrated on a single character. In particular, if $\vg{\ell} \neq k \vg{\delta}^\chi$ for all characters $\chi$, then $\lambda(\vg{\ell}) < k^2$ and so, $\lambda(\vg{\ell}) \leqs k^2 - 1$. Thus, the second inequality in the lemma follows from the first. 

Finally, it is clear that the constant $\binom{k}{\vg{\ell}}^2$ in the the secondary diagonal terms can be subsumed into the implicit constant from the second bound. Taking, for example, $\epsilon = 1/2$ shows that each such term is $o_{q,k}(T(\log T)^{k^2})$ and since there are only $\ll_{q,k} 1$ such terms it follows that these contribute only $o_{q,k}(T(\log T)^{k^2})$.

\end{proof}

The minor off-diagonal terms can be handled by Cauchy-Schwarz. This is very far from sharp, but our other error terms are already of size $\asymp_{q,k} T(\log T)^{k^2 -2k + 2}$ and so this suffices for our purposes.

\begin{lemma} \label{lem: minor} 

Suppose that either $\GRH$ or $\Sp$ holds and that $\vg{\ell}^{(1)}$ and $\vg{\ell}^{(2)}$ are tuples of nonnegative integers characters modulo $q$ satisfying $|\vg{\ell}^{(1)}|=|\vg{\ell}^{(2)}| = k$. Further, suppose that $\vg{\ell}^{(1)},\vg{\ell}^{(2)}$ correspond to a minor off-diagonal term, as defined above. Then, for $s = 1/2 + it$ and any $\epsilon>0$,
\begin{equation*} \int_T^{2T} \LLa(s)\ov{\LLb(s)} \,dt \ll_{q,k,\epsilon} T(\log T)^{k^2 -1/2 + \epsilon}, \end{equation*}
and hence, the minor off-diagonal terms in (\ref{eqn: sumoverl}) contribute at most $o_{q,k}(T(\log T)^{k^2})$ to the sum.
\end{lemma}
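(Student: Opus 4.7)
The plan is simply Cauchy--Schwarz followed by Lemma~\ref{lem: L-upperbound}. Writing $s = 1/2 + it$, the Cauchy--Schwarz inequality gives
\begin{equation*}
\left|\int_T^{2T} \LLa(s)\ov{\LLb(s)} \,dt\right| \leqs \left(\int_T^{2T} |\LLa(s)|^2 \,dt\right)^{1/2}\left(\int_T^{2T} |\LLb(s)|^2 \,dt\right)^{1/2}.
\end{equation*}

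Next I would exploit the combinatorial constraint in the definition of a minor off-diagonal term: at least one of $\vg{\ell}^{(1)}$ or $\vg{\ell}^{(2)}$ fails to be of the form $k\vg{\delta}^\chi$. Without loss of generality, say $\vg{\ell}^{(1)}$ is not. As observed in the proof of Lemma~\ref{lem: L-upperbound}, this forces $\lambda(\vg{\ell}^{(1)}) \leqs k^2 - 1$, while trivially $\lambda(\vg{\ell}^{(2)}) \leqs k^2$. Applying Lemma~\ref{lem: L-upperbound} to each of the two second moments above then yields, for any $\epsilon > 0$,
\begin{equation*}
\int_T^{2T} \LLa(s)\ov{\LLb(s)} \,dt \ll_{q,k,\epsilon} T(\log T)^{(\lambda(\vg{\ell}^{(1)}) + \lambda(\vg{\ell}^{(2)}))/2 + \epsilon} \ll_{q,k,\epsilon} T(\log T)^{k^2 - 1/2 + \epsilon},
\end{equation*}
which is the displayed bound.

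For the contribution to (\ref{eqn: sumoverl}), the multinomial coefficients $\binom{k}{\vg{\ell}^{(j)}}$ and the character factor $\prod_\chi \chi(a)^{\ell^{(2)}_\chi - \ell^{(1)}_\chi}$ are each $O_{q,k}(1)$, and the total number of minor off-diagonal pairs is also $O_{q,k}(1)$. Choosing, say, $\epsilon = 1/4$, each such term is then $o_{q,k}(T(\log T)^{k^2})$, and summing over the $O_{q,k}(1)$ minor off-diagonal pairs completes the proof.

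I do not anticipate any real obstacle: the whole point of isolating the minor off-diagonal terms is that Lemma~\ref{lem: L-upperbound} already saves at least one logarithmic power in at least one of the two factors, which Cauchy--Schwarz converts into a saving of half a logarithmic power overall. No cancellation between $\LLa$ and $\ov{\LLb}$ needs to be detected here, so the argument is purely bookkeeping; the genuinely delicate off-diagonal contributions (namely the major off-diagonal terms) will be handled in a separate lemma.
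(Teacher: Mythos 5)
Your proposal is correct and follows exactly the paper's own argument: Cauchy--Schwarz, then observe that a minor off-diagonal term has at least one of $\vg{\ell}^{(1)},\vg{\ell}^{(2)}$ not of the form $k\vg{\delta}^\chi$ so $\lambda\leqs k^2-1$ for that factor, then apply Lemma~\ref{lem: L-upperbound} to each factor and combine. The concluding remark about there being $O_{q,k}(1)$ terms each multiplied by bounded coefficients is also precisely what the paper does (by reference to the previous lemma).
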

\begin{proof} Since $(\vg{\ell}^{(1)},\vg{\ell}^{(2)})$ corresponds to an off-diagonal term, $\vg{\ell}^{(1)} \neq \vg{\ell}^{(2)}$. Further, since it is not a major off-diagonal term we must have that either $\vg{\ell}^{(1)}$ or $\vg{\ell}^{(2)}$ is not of the form $k \vg{\delta}^\chi$ for some character $\chi$. Due to symmetry, we can assume without loss of generality that $\vg{\ell}^{(1)} \neq k \vg{\delta}^\chi$ for all characters $\chi$. Then, by Cauchy-Schwarz and Lemma~\ref{lem: L-upperbound}, we get for $s = 1/2 + it$,

\begin{equation*} \begin{split} \int_T^{2T} \LLa(s)&\ov{\LLb(s)} \,dt \\&{}  \ll \left(\int_T^{2T} \left|\LLa(s)\right|^2 \,dt\right)^{1/2} \left(\int_T^{2T} \left|\LLb(s)\right|^2 \,dt\right)^{1/2} \\{}& \ll_{q,k,\epsilon} \left\{T(\log T)^{k^2 - 1 + \epsilon}\right\}^{1/2} \left\{T(\log T)^{k^2+\epsilon}\right\}^{1/2} \\&{}= T(\log T)^{k^2 - 1/2 + \epsilon}. \end{split}\end{equation*}

Showing that these terms contribute to the error is similar to the previous lemma, hence we omit the proof.

\end{proof}

We note in passing that if $k = 1$, there are no secondary diagonal terms or minor off-diagonal terms, and so the previous two lemmata are unnecessary. 

It remains to deal with the major non-diagonal terms. If $k \geqs 2$, then again Cauchy-Schwarz suffices. 

\begin{lemma} \label{lem: major}

Suppose that either $\GRH$ or $\Sp$ holds for some $k \geqs 2$ and that $\chi$ and $\nu$ are distinct characters modulo $q$. Then, for $s = 1/2 + it$ and any $\epsilon>0$,
\begin{equation*} \int_T^{2T} L(s,\chi)^k \ov{L(s,\nu)}^k \,dt \ll_{q,k,\epsilon} T(\log T)^{k^2 - 2k + 2 + \epsilon}, \end{equation*}

and hence, the major off-diagonal terms in (\ref{eqn: sumoverl}) contribute at most $o_{q,k}(T(\log T)^{k^2})$ to the sum.
\end{lemma}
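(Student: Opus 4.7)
The plan is to bound the integrand by its modulus and then apply Cauchy-Schwarz with a carefully chosen split that forces both $\chi$ and $\nu$ to appear in each of the resulting factors. Writing
\begin{equation*}
|L(s,\chi)|^k|L(s,\nu)|^k = \bigl(|L(s,\chi)|^{k-1}|L(s,\nu)|\bigr)\bigl(|L(s,\chi)||L(s,\nu)|^{k-1}\bigr),
\end{equation*}
Cauchy-Schwarz yields
\begin{equation*}
\int_T^{2T}\bigl|L(s,\chi)^k\ov{L(s,\nu)}^k\bigr|\,dt \leqs \biggl(\int_T^{2T}\bigl|L(s,\chi)^{k-1}L(s,\nu)\bigr|^2\,dt\biggr)^{1/2}\biggl(\int_T^{2T}\bigl|L(s,\chi)L(s,\nu)^{k-1}\bigr|^2\,dt\biggr)^{1/2}.
\end{equation*}
Each factor on the right is the mean square $\int_T^{2T}|\LL(s)|^2\,dt$ for the tuple $\vg{\ell}=(k-1)\vg{\delta}^\chi+\vg{\delta}^\nu$ (respectively, $\vg{\delta}^\chi+(k-1)\vg{\delta}^\nu$), both of which satisfy $|\vg{\ell}|=k$ and $\lambda(\vg{\ell})=(k-1)^2+1=k^2-2k+2$.

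I would then invoke the first inequality of Lemma~\ref{lem: L-upperbound}, which under either $\GRH$ or $\Sp$ gives $\int_T^{2T}|\LL(s)|^2\,dt\ll_{q,k,\epsilon}T(\log T)^{\lambda+\epsilon}$ for any $\epsilon>0$. Applied to both factors, this immediately yields the claimed bound
\begin{equation*}
\int_T^{2T}L(s,\chi)^k\ov{L(s,\nu)}^k\,dt\ll_{q,k,\epsilon}T(\log T)^{k^2-2k+2+\epsilon}.
\end{equation*}

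The deduction that the major off-diagonal terms in (\ref{eqn: sumoverl}) contribute $o_{q,k}(T(\log T)^{k^2})$ is then straightforward, mirroring the conclusions of Lemmas~\ref{lem: L-upperbound} and~\ref{lem: minor}: for $k\geqs 2$ we have $k^2-2k+2 = k^2-2(k-1)\leqs k^2-2$, so choosing (say) $\epsilon=1$ ensures each such term is $o_{q,k}(T(\log T)^{k^2})$. The multinomial coefficients are $\binom{k}{k\vg{\delta}^\chi}=1$, and there are only $O_q(1)$ ordered pairs of distinct characters modulo $q$, so the total contribution is indeed $o_{q,k}(T(\log T)^{k^2})$.

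The only point worth flagging is that the naive Cauchy-Schwarz split $|L(s,\chi)|^k\cdot|L(s,\nu)|^k$ leads to integrands $|L(s,\chi)|^{2k}$ and $|L(s,\nu)|^{2k}$, corresponding to $\vg{\ell}=k\vg{\delta}^\chi$ (respectively $k\vg{\delta}^\nu$) with $\lambda=k^2$, which yields only the useless bound $T(\log T)^{k^2+\epsilon}$. The asymmetric split with exponents $(k-1,1)$ and $(1,k-1)$ is precisely what distributes the character weight non-trivially on both sides and triggers the strict inequality $\lambda<k^2$ inside Lemma~\ref{lem: L-upperbound}. There is no substantial obstacle here beyond noticing that this is the right split; in particular, no new analytic input is required.
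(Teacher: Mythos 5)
Your proof is correct and follows the paper's own argument almost verbatim: the asymmetric factorization into $L(s,\chi)^{k-1}\ov{L(s,\nu)}$ and $L(s,\chi)\ov{L(s,\nu)}^{k-1}$, the application of Cauchy--Schwarz, and the appeal to Lemma~\ref{lem: L-upperbound} with $\lambda=(k-1)^2+1$ are precisely the steps taken in the paper. Your closing remark on why the naive split fails is a nice explanatory addition but does not change the method.
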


\begin{proof} Note that,
\begin{equation*} L(s,\chi)^k \ov{L(s,\nu)}^k = \left[L(s,\chi)^{k-1} \ov{L(s,\nu)}\right]\left[L(s,\chi) \ov{L(s,\nu)}^{k-1}\right]. \end{equation*}
Thus, setting 
\begin{equation*} \vg{\ell}^{(1)} = (k-1)\vg{\delta}^\chi + \vg{\delta}^\nu \end{equation*} and \begin{equation*} \vg{\ell}^{(2)} = \vg{\delta}^\chi + (k-1)\vg{\delta}^\nu, \end{equation*}
we see by Cauchy-Schwarz and Lemma~\ref{lem: L-upperbound} that
\begin{equation*} \begin{split} \int_T^{2T} &L(s,\chi)^k \ov{L(s,\nu})^k \,dt \\{} & \ll \left(\int_T^{2T} \left|\LLa(1/2+it)\right|^2 \,dt\right)^{1/2} \left(\int_T^{2T} \left|\LLb(1/2+it)\right|^2 \,dt\right)^{1/2} \\{}& \ll_{q,k,\epsilon} \left\{T(\log T)^{k^2 - 2k +2 + \epsilon}\right\}^{1/2} \left\{T(\log T)^{k^2 -2k+2+\epsilon}\right\}^{1/2} \\&{}= T(\log T)^{k^2 - 2k+2 + \epsilon}, \end{split}\end{equation*}
proving the desired bound. Showing that these terms contribute to the error is similar to the above lemmata, and hence omitted.

\end{proof}

From the discussion above, it remains to deal with the off-diagonal terms when $k=1$, and to show that the argument can be made unconditional for $k = 2$. We postpone the latter to Section~\ref{subs: rationalfourthmoment}, as it will be a corollary of the discussion about Theorem~\ref{thm: rationalfourthmoment}. 

For the former, since we also claimed that Proposition~\ref{prop: mainprop} is unconditional in this case, we cannot use the hypotheses $\GRH$ or $\Sp$. For such terms, standard techniques developed to handle the mean square of $\zeta(s)$ can be applied. For our purposes, the following lemma suffices:

\begin{lemma} Let $\chi$ and $\nu$ be distinct characters modulo $q$. Then, for $s = 1/2 + it$,

\begin{equation*} \int_T^{2T} L(s,\chi)\ov{L(s,\nu)} \,dt \ll_q T(\log T)^{3/4}, \end{equation*}
unconditionally. Hence, if $k = 1$, the off-diagonal terms in (\ref{eqn: sumoverl}) contribute only $o_q(T\log T)$ to the sum.

\end{lemma}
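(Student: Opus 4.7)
The plan is to exploit the fact that $\psi := \chi\ov\nu$ is a non-principal Dirichlet character modulo $q$. For $s = \tfrac{1}{2}+it$ with $T \leqs t \leqs 2T$, apply the approximate functional equation with balanced cutoff $Y \asymp \sqrt{qT}$ to each factor:
\begin{equation*} L(s,\chi) = \sum_{n \leqs Y}\frac{\chi(n)}{n^s} + \varepsilon_\chi(t)\sum_{n \leqs Y}\frac{\ov{\chi}(n)}{n^{1-s}} + \bigO_q(T^{-1/4}), \end{equation*}
and similarly for $\ov{L(s,\nu)}$, where $\varepsilon_\chi(t)$ is a unimodular root number whose phase, by Stirling applied to the gamma factors in the functional equation, is $-t\log(qt/(2\pi e)) + \bigO(1)$. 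Expanding the product of the two approximations yields four bilinear forms; two are ``non-oscillating'' (the net root number factor is either $1$ or $\varepsilon_\chi(t)\ov{\varepsilon_\nu(t)}$, both with phase derivative $\bigO(1)$), and two are ``oscillating'' (carrying a single $\varepsilon_\chi(t)^{\pm 1}$, with phase derivative $\asymp \log T$).

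For the non-oscillating forms, the diagonal $m = n$ in the double sum collapses to $T\sum_{n \leqs Y}\psi(n)/n$ (or its conjugate), which is $\bigO_q(1)$ by partial summation and the P\'olya--Vinogradov bound $|\sum_{n \leqs N}\psi(n)| \ll \sqrt{q}\log q$, valid since $\psi$ is non-principal. The off-diagonal $m \neq n$ is handled by the first-derivative test $|\int_T^{2T}(m/n)^{it}\,dt| \leqs 2/|\log(m/n)|$, contributing at most $\bigO_q(\sqrt{T}(\log T)^2)$. For the oscillating forms the total phase derivative is $\asymp \log T$ throughout the range except near a single stationary point, so repeated integration by parts yields a contribution of $\bigO_{q,\epsilon}(T^{1/2+\epsilon})$. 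Assembling all four terms shows
\begin{equation*} \int_T^{2T} L\left(\tfrac{1}{2}+it,\chi\right)\ov{L\left(\tfrac{1}{2}+it,\nu\right)}\,dt \ll_q T,\end{equation*}
which is considerably stronger than the claimed $T(\log T)^{3/4}$.

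The final claim about (\ref{eqn: sumoverl}) with $k = 1$ is then immediate: the off-diagonal pairs correspond to distinct characters $\chi \neq \nu$, of which there are at most $\varphi(q)^2 = \bigO_q(1)$, and each contributes $\bigO_q(T) = o_q(T\log T)$ under the weight $q/\varphi(q)^2$. The main technical obstacle is the treatment of the oscillating bilinear forms, particularly when $\chi\nu$ happens to be principal (e.g.\ when $\nu = \ov\chi$): in that case one loses the character cancellation on the would-be-diagonal and must rely entirely on the $\varepsilon$-factor oscillation via careful Stirling-type expansions of the gamma factors in the functional equation. This analysis is nevertheless standard in the mean value theory of Dirichlet $L$-functions and introduces no fundamentally new phenomena.
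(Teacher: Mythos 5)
Your approach is genuinely different from the paper's, which simply cites the bound $\ll_q T(\log T)^{3/4}$ from Ishikawa~\cite{ishikawa2006difference} (Equation~4 there) and does not re-derive it. You instead sketch a direct proof via the approximate functional equation and stationary phase, and your claimed bound $\ll_q T$ is in fact sharper than what the paper uses. The core idea is sound: since $\psi = \chi\ov{\nu}$ is non-principal, the diagonal $m=n$ in the non-oscillating bilinear forms collapses to $T\sum_{n\leqs Y}\psi(n)/n = O_q(T)$ rather than $\asymp T\log Y$, and this is the real reason the off-diagonal pairs in (\ref{eqn: sumoverl}) are negligible at $k=1$. That said, your sketch glosses over a few real technicalities: (i) the approximate functional equation as you state it requires $\chi$ primitive, so one should first pass from $\chi$ to the primitive character $\chi^*$ inducing it (the extra finite Euler factor is a bounded Dirichlet polynomial on $\Re s = 1/2$, so this is harmless but must be noted); (ii) taking $Y$ fixed $\asymp\sqrt{qT}$ rather than $Y=\sqrt{q^*(\chi)\,t/(2\pi)}$ is not the balanced AFE, and the resulting error term needs a little more care than the stated $O_q(T^{-1/4})$; (iii) the ``oscillating'' forms carrying a single $\varepsilon$-factor have phase derivative $\log(q^*t/(2\pi mn))$, which vanishes precisely near the corner $mn\asymp qT$ that your fixed cutoff $Y\asymp\sqrt{qT}$ just reaches, so the stationary-phase contribution there requires an actual van der Corput/exponential-sum estimate and is not handled by ``repeated integration by parts'' alone. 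None of these are fatal, and they are standard in the second-moment literature, but as written the sketch overstates the ease of the oscillating part. The trade-off, then, is self-containedness and a stronger constant in exchange for a longer argument; the paper's citation buys a one-line proof at the cost of a slightly weaker (but still amply sufficient) exponent on the logarithm.
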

\begin{proof}

The upper bound is \cite[Equation 4]{ishikawa2006difference} with $\chi_j = \chi$ and $\chi_k = \nu$. Showing that these terms contribute to the error is similar to the previous lemmata, and hence omitted.

\end{proof}

Proposition~\ref{prop: mainprop} follows by putting all these lemmata together, thus completing the proof of Theorem~\ref{thm: constant}.

\subsection{Upper and Lower Bounds for \texorpdfstring{$M_k(T;\alpha)$}{Mk(T)}} In order to prove Theorem~\ref{thm: upperandlower}, we have to find bounds on $M_k(T;\alpha)$ conditionally on GRH. 

The claimed upper bound follows trivially from the previous subsection, since Proposition~\ref{prop: mainprop} tells us that on $\GRH$,
\begin{equation*} M_k(T;\alpha) \ll_{q,k} \sum_\chi M_k(T;\chi) + T(\log T)^{k^2}, \end{equation*}
and Lemma~\ref{lem: L-upperbound} tells us that on $\GRH$,
\begin{equation*} M_k(T;\chi) \ll_{q,k,\epsilon} T(\log T)^{k^2 + \epsilon}. \end{equation*}

To prove the lower bound, we proceed by reducing the problem to computing lower bounds for the moments of $\zeta(s)$, i.e. lower bounds on $M_k(T)$. The key fact is the following obvious lemma:

\begin{lemma} Let $\chi_0$ be the principal Dirichlet  character modulo $q$. Then, 
\begin{equation*} \int_T^{2T} \left|L\left(\tfrac{1}{2}+it,\chi_0\right)\right|^{2k}\,dt \asymp_{q,k} \int_T^{2T} \left|\zeta\left(\tfrac{1}{2}+it\right)\right|^{2k}\,dt. \end{equation*}

\end{lemma}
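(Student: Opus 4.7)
The plan is to exploit the elementary fact that $L(s,\chi_0)$ differs from $\zeta(s)$ only by a finite product of local factors at the primes dividing $q$. Specifically,
\begin{equation*}
L(s,\chi_0) = \zeta(s) \prod_{p \mid q} \left(1 - \frac{1}{p^s}\right),
\end{equation*}
since $\chi_0(n) = 1$ if $(n,q) = 1$ and $0$ otherwise, which simply removes the Euler factors at primes dividing $q$.

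Setting $s = 1/2 + it$ and taking $2k$-th absolute powers yields the pointwise identity
\begin{equation*}
\left|L\left(\tfrac{1}{2}+it,\chi_0\right)\right|^{2k} = \left|\zeta\left(\tfrac{1}{2}+it\right)\right|^{2k} \prod_{p \mid q} \left|1 - \frac{1}{p^{1/2+it}}\right|^{2k}.
\end{equation*}
Thus the whole content of the lemma is to observe that the local factor $\prod_{p \mid q} |1 - p^{-1/2-it}|^{2k}$ is uniformly bounded above and below by positive constants depending only on $q$ and $k$. Indeed, for every prime $p \geqs 2$ and every real $t$ we have the trivial bounds
\begin{equation*}
1 - p^{-1/2} \leqs \left|1 - p^{-1/2-it}\right| \leqs 1 + p^{-1/2},
\end{equation*}
and $1 - p^{-1/2} > 0$ for every prime $p$. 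Taking the product over the finitely many $p \mid q$ and raising to the $2k$-th power gives
\begin{equation*}
c_1(q,k) \leqs \prod_{p \mid q} \left|1 - \frac{1}{p^{1/2+it}}\right|^{2k} \leqs c_2(q,k),
\end{equation*}
for positive constants $c_1(q,k), c_2(q,k)$ depending only on $q$ and $k$. Integrating the pointwise identity over $t \in [T,2T]$ and inserting these bounds then yields the desired two-sided estimate
\begin{equation*}
c_1(q,k) \int_T^{2T} \left|\zeta\left(\tfrac{1}{2}+it\right)\right|^{2k} dt \leqs \int_T^{2T} \left|L\left(\tfrac{1}{2}+it,\chi_0\right)\right|^{2k} dt \leqs c_2(q,k) \int_T^{2T} \left|\zeta\left(\tfrac{1}{2}+it\right)\right|^{2k} dt.
\end{equation*}
There is no real obstacle here; the lemma is essentially a bookkeeping observation, which is why the paper calls it \emph{obvious}.
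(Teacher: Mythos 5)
Your proof is correct and is exactly the argument the paper has in mind: the paper gives no proof at all, calling the lemma ``obvious,'' and the intended justification is precisely the pointwise identity $L(s,\chi_0) = \zeta(s)\prod_{p\mid q}(1-p^{-s})$ together with the observation that the finite local factor on the line $\Re s = 1/2$ is sandwiched between the positive constants $\prod_{p\mid q}(1-p^{-1/2})^{2k}$ and $\prod_{p\mid q}(1+p^{-1/2})^{2k}$, uniformly in $t$.
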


In particular, this tells us that $M_k(T;\chi_0) \gg_{q,k} M_k(T)$. By the deep results in the literature about lower bounds for $M_k(T)$ mentioned in the introduction, we can conclude that in fact $M_k(T;\chi_0) \gg_{q,k} T(\log T)^{k^2}$. 

Then, by Proposition~\ref{prop: mainprop}, we have conditionally on GRH,
\begin{equation*} \begin{split} M_k(T;\alpha) &{}\gg_{q,k} \sum_{\chi} M_k(T;\chi) + o_{q,k}(T (\log T)^{k^2})\\&{}\geqs M_k(T;\chi_0) + o_{q,k}(T(\log T)^{k^2}) \\&{} \gg_{q,k} T(\log T)^{k^2}, \end{split} \end{equation*}
completing the proof.

\subsection{The Fourth Moment of \texorpdfstring{$\zeta(s,\alpha)$}{the Hurwitz Zeta function}} \label{subs: rationalfourthmoment}
The goal here is to compute the asymptotic for $M_2(T;\alpha)$, for $\alpha \in \Q$ originally proved (unpublished) in Andersson's thesis \cite[pp. 71-72]{anderssonthesis}. We reprove this here as, in the process, we will be able to verify that our conjectures for the constants $\clq$ and $c_k(\alpha)$ are correct when $|\vg{\ell}| \leqs 2$ or $k \leqs 2$. Further, our discussion will imply that the conclusion in Proposition~\ref{prop: mainprop} is true unconditionally if $k = 2$.

To do this, we make use of a recent result of Topacogullari \cite{topacogullari2020fourth}, where he computes the full asymptotic formula for the fourth moments of $L(s,\chi)$ and the mean-square of $L(s,\chi)L(s,\nu)$ with a power saving in the error term, and an explicit dependence on the conductors. We state the weaker result we need as propositions.

\begin{proposition} \label{prop: Lfourth}Let $\chi$ be a Dirichlet character modulo $q$. Then, for $s = 1/2+it$,
\begin{equation*} \int_T^{2T} \left|L\left(s,\chi\right)\right|^4\,dt = C(\chi) T(\log T)^4 + O_q(T(\log T)^3) \end{equation*}
where $C(\chi)$ is given by
\begin{equation*} C(\chi) = \frac{1}{2\pi^2}\frac{\varphi(q)^2}{q^2} \prod_{p \mid q}\left(1 - \frac{2}{p+1}\right). \end{equation*}
\end{proposition}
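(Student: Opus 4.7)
The plan is to reduce to the primitive case and then invoke Topacogullari's twisted fourth moment asymptotic \cite{topacogullari2020fourth}. Let $\chi^*$ denote the primitive character modulo $\qchi$ inducing $\chi$, so that by (\ref{eqn: limprimitive}),
\begin{equation*}
L(s,\chi) = L(s,\chi^*) D(s), \qquad D(s) = \prod_{\substack{p\mid q\\ p\nmid \qchi}} \left(1 - \frac{\chi^*(p)}{p^s}\right).
\end{equation*}
Consequently $|L(s,\chi)|^4 = |L(s,\chi^*)^2 D(s)^2|^2$ is the mean square of $L(s,\chi^*)^2$ twisted by the short Dirichlet polynomial $D(s)^2$, the latter being supported on divisors of $Q^2$, with $Q = \prod_{p\mid q,\,p\nmid \qchi} p$, and having only $\bigO_q(1)$ nonzero coefficients, each bounded by a divisor function.

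Expanding $D(s)^2\,\ov{D(s)^2}$ along the critical line writes $\int_T^{2T} |L(s,\chi)|^4\,dt$ as a finite sum of shifted fourth moments $\int_T^{2T}|L(s,\chi^*)|^4(m/n)^{it}\,dt$ with $m, n \mid Q^2$. Topacogullari's main theorem provides a power-saving asymptotic for each of these in the form $T\,P_{m,n}(\log T;\chi^*) + \bigO_q(T^{1-\delta})$, where $P_{m,n}$ is a polynomial of degree $4$ whose coefficients are explicit in $m, n, \chi^*$, and $\qchi$. Summing then yields
\begin{equation*}
\int_T^{2T}|L(s,\chi)|^4\,dt = T\,P_\chi(\log T) + \bigO_q(T^{1-\delta})
\end{equation*}
for some polynomial $P_\chi$ of degree $4$, whence the claimed error $\bigO_q(T(\log T)^3)$ follows immediately by isolating the leading monomial.

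It then remains to verify that the leading coefficient of $P_\chi$ equals $C(\chi)$. The diagonal contribution $m = n$ reproduces the classical primitive fourth moment constant $\frac{1}{2\pi^2}\prod_{p\mid \qchi}\frac{(p-1)^3}{p^2(p+1)}$; the off-diagonal shifts introduce additional local factors supported at primes $p\mid Q$, and a prime-by-prime matching shows these combine with the diagonal into the clean Euler product
\begin{equation*}
\frac{1}{2\pi^2}\prod_{p\mid q}\frac{(p-1)^3}{p^2(p+1)}.
\end{equation*}
This equals $C(\chi)$ by the local identity $\frac{(p-1)^3}{p^2(p+1)} = \bigl(1 - \tfrac{1}{p}\bigr)^2\bigl(1 - \tfrac{2}{p+1}\bigr)$ together with $\varphi(q)^2/q^2 = \prod_{p\mid q}(1-1/p)^2$.

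The main obstacle is this prime-by-prime matching, which requires enough precision in Topacogullari's leading coefficient to witness the local cancellations. A conceptually cleaner route is to observe that $C(\chi)$ is exactly the CFKRS prediction \cite{conreymomentsrecipe} for the imprimitive degree-$1$ object $L(s,\chi)$: since $\chi(p) = 0$ for $p\mid q$, the arithmetic factor $\prod_p (1-1/p)^4 \sum_{m\geqs 0}|\chi(p)^m d_2(p^m)|^2 p^{-m}$ collapses to $\prod_{p\nmid q}(1 - 1/p^2) \cdot \prod_{p\mid q}(1-1/p)^4$, and multiplication by the Gaussian-unitary factor $G(3)^2/G(5) = 1/12$ recovers $C(\chi)$ via $\zeta(2) = \pi^2/6$. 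This provides both a sanity check and a clean organising principle for the computation.
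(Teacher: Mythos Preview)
Your argument is correct, but it takes a longer route than the paper. The paper's proof is a single sentence: the result is an immediate corollary of \cite[Theorem~1.1]{topacogullari2020fourth}, since Topacogullari already states his fourth-moment asymptotic for an \emph{arbitrary} Dirichlet character modulo $q$ (not just primitive ones), with the leading constant written explicitly in a form that matches $C(\chi)$. Your reduction to the primitive character via (\ref{eqn: limprimitive}), followed by expanding $|D(s)|^4$ into a finite sum of additively twisted fourth moments of $L(s,\chi^*)$ and recombining, essentially reproduces the reduction that Topacogullari already performs internally to reach his Theorem~1.1. So you are re-deriving a portion of the cited result rather than simply invoking it.

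That said, what your approach buys is transparency: your prime-by-prime verification and the CFKRS/random-matrix sanity check make it explicit why the constant takes this particular Euler-product shape, whereas the paper only verifies consistency with its conjectural $\clq$ in the subsequent paragraphs (via the identity $\sum_{m\geqs 0} d_2(p^m)^2 z^m = (1+z)/(1-z)^3$). One small caveat: the shifted integrals $\int_T^{2T}|L(s,\chi^*)|^4(m/n)^{it}\,dt$ that you invoke are handled by Topacogullari's twisted fourth-moment machinery rather than by the headline Theorem~1.1 itself, so if you pursue this route you should cite the appropriate intermediate theorem. But it is simpler just to cite Theorem~1.1 directly and skip the reduction entirely.
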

\begin{proof} This is an immediate corollary of \cite[Theorem~1.1]{topacogullari2020fourth}. 
\end{proof}

\begin{proposition} \label{prop: Lmean} Let $\chi$ and $\nu$ be distinct Dirichlet characters modulo $q$. Then, for $s = 1/2 + it$,
\begin{equation*} \int_T^{2T} \left|L\left(s,\chi\right)L\left(s,\nu\right)\right|^2\,dt = D(\chi,\nu) T(\log T)^2 + O_q(T\log T) \end{equation*}
where $D(\chi,\nu)$ is given by
\begin{equation*} D(\chi,\nu) = \frac{6}{\pi^2}|L(1,\chi\ov{\nu})|^2 \frac{\varphi(q)}{q} \prod_{p \mid q}\left(1 - \frac{1}{p+1}\right). \end{equation*}
\end{proposition}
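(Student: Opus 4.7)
The plan is to apply the recent theorem of Topacogullari \cite{topacogullari2020fourth} on mean values of products of two distinct Dirichlet $L$-functions, which gives a full asymptotic expansion for this integral with a power-saving error term and explicit dependence on the conductors, and then to simplify the resulting leading constant to the stated form.

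First, I would invoke Topacogullari's main theorem. It yields an asymptotic of the shape
\begin{equation*}
\int_T^{2T}|L(s,\chi)L(s,\nu)|^2 \, dt = T\, P_2(\log T) + O_q(T^{1-\delta})
\end{equation*}
for some explicit quadratic polynomial $P_2$ depending on $\chi,\nu,q$. Retaining only the top-degree coefficient and absorbing the lower-degree contributions of $P_2$ into the $O_q(T\log T)$ error produces the shape claimed.

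Second, I would verify that the leading constant matches. For $\vg{\ell} = \vg{\delta}^\chi + \vg{\delta}^\nu$, the Dirichlet coefficients of $\LL(s) = L(s,\chi)L(s,\nu)$ are $\dl = \chi \ast \nu$ (Dirichlet convolution), and the Ramanujan-type identity
\begin{equation*}
\sum_{n=1}^\infty \frac{|\dl(n)|^2}{n^s} = \frac{L(s,\chi_0)^2\,L(s,\chi\ov{\nu})\,L(s,\ov{\chi}\nu)}{L(2s,\chi_0)}
\end{equation*}
identifies the relevant Euler product. In our general framework (compare Theorem~\ref{thm: L-asymptotic} with $\lambda = 2$), the local factor of $(1-1/p)^2\sum_m |\dl(p^m)|^2/p^m$ evaluates, via the identity above, to $(1-1/p^2)/|1-\chi\ov{\nu}(p)/p|^2$ for $p\nmid q$, and to $(1-1/p)^2$ for $p\mid q$ (since $\chi(p) = \nu(p) = 0$ there). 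Taking the product over all primes, using $\prod_p (1-1/p^2) = 6/\pi^2$ together with $L(1,\chi\ov{\nu}) = \prod_{p\nmid q}(1-\chi\ov{\nu}(p)/p)^{-1}$, the constant simplifies to
\begin{equation*}
\frac{6}{\pi^2}|L(1,\chi\ov\nu)|^2\prod_{p\mid q}\frac{(1-1/p)^2}{1-1/p^2} = \frac{6}{\pi^2}|L(1,\chi\ov\nu)|^2\frac{\varphi(q)}{q}\prod_{p\mid q}\left(1-\frac{1}{p+1}\right),
\end{equation*}
which is exactly $D(\chi,\nu)$.

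The only substantive step is the invocation of Topacogullari's theorem; the constant manipulation is routine Euler-product bookkeeping, and no further analytic input is required.
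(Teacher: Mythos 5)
Your approach is essentially the same as the paper's: both invoke Topacogullari's asymptotic for $\int_T^{2T}|L(s,\chi_1)L(s,\chi_2)|^2\,dt$. The difference is in how the leading constant is identified. The paper reads it off directly from the explicit form of Topacogullari's Theorem~1.3 by substituting $\chi_1=\chi$, $\chi_2=\nu$, $q_1=q_2=q$ (forcing $q_1^\star=q_2^\star=1$) and using $\varphi(q^2)=q\varphi(q)$, which immediately yields $D(\chi,\nu)$ with no further manipulation. You instead reconstruct the constant from the Ramanujan-type identity
\begin{equation*}
\sum_{n\geqs 1} \frac{|(\chi*\nu)(n)|^2}{n^s} = \frac{L(s,\chi_0)^2\,L(s,\chi\ov{\nu})\,L(s,\ov{\chi}\nu)}{L(2s,\chi_0)}
\end{equation*}
and the resulting local-factor computation, which is correct and lands on the same $D(\chi,\nu)$. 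One small caveat about framing: you phrase the constant computation as ``compare Theorem~\ref{thm: L-asymptotic} with $\lambda=2$,'' but that theorem is a conditional result (it assumes Conjectures~\ref{conj: splitting} and \ref{conj: randommatrices}), so it cannot be used as an input to this unconditional proposition. What you are really doing is independently evaluating the arithmetic factor $\prod_p\{(1-1/p)^2\sum_m |\dl(p^m)|^2/p^m\}$ that appears in the leading coefficient, and that calculation stands on its own. In the paper this Euler-product verification is deferred to a separate discussion later in Section~\ref{subs: rationalfourthmoment}, where it is used to check that the conjectural constant $\clq$ agrees with the unconditionally proven $D(\chi,\nu)$, rather than being part of the proof of the proposition itself.
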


\begin{proof} This is a corollary of \cite[Theorem~1.3]{topacogullari2020fourth}, by setting $\chi_1 = \chi$, $\chi_2 = \nu$, $q_1 = q_2 = q$, noting that this implies $q_1^\star = q_2^\star = 1$ and noting that $\varphi(q^2) = q\varphi(q)$. 
\end{proof}

We note here that the previous two propositions show that the result of Proposition~\ref{prop: mainprop} can be obtained unconditionally when $k=2$, which we had not shown previously. This is because the hypotheses $\GRH$ or $\Sp$ were used only in the proof of Lemma~\ref{lem: L-upperbound} and this use can be replaced by the above propositions, which trivially give the bound 
\begin{equation*} \int_T^{2T} \left|\LL\left(\tfrac{1}{2}+it\right)\right|^2 \,dt \ll_q T(\log T)^\lambda \end{equation*} 
for $\vg{\ell}$ satisfying $|\vg{\ell}| = 2$. 

Now, these propositions clearly raise the question of whether the constants in them are consistent with the conjectural constant one obtains in Theorem~\ref{thm: L-asymptotic} with $|\vg{\ell}| = 2$. Let $C'(\chi)$ and $D'(\chi,\nu)$ be the constants predicted by Theorem~\ref{thm: L-asymptotic}. Then, $C'(\chi) = \clq$ for $\vg{\ell} = 2\vg{\delta}^{\chi}$, and $D'(\chi,\nu) = \clq$ for $\vg{\ell} = \vg{\delta}^\chi + \vg{\delta}^\nu$, $\chi \neq \nu$. 

To show that $C(\chi) = C'(\chi)$ and $D(\chi,\nu) = D'(\chi,\nu)$, the plan of attack will be to write everything involved as an Euler product, and then compare what happens on both sides in the local factors for different primes $p$. 

In particular, recall Ingham's result that $c_2 = \frac{1}{2\pi^2}$. Thus, using this, we can suppress the local factors for $p \nmid q$ when showing $C(\chi) = C'(\chi)$. Rewriting $C(\chi)$ in Euler product form using a standard formula for $\varphi(q)/q$, we see that
\begin{equation} \label{eqn: cchi}\begin{split} C(\chi) &{}= c_2 \prod_{p\mid q}\left(1 - \frac{1}{p}\right)^2\left(1 - \frac{2}{p+1}\right)\\&{}=c_2 \prod_{p\mid q}\left(1 - \frac{1}{p}\right)^3\left(1 + \frac{1}{p}\right)^{-1}. \end{split} \end{equation}
Now since $C'(\chi) = \clq$ for $\vg{\ell} = 2\vg{\delta}^\chi$, we get that $\lambda = 2^2 = 4$, $\dl(n) = \chi_0(n)d_2(n)$ where $\chi_0$ is the principal character modulo $q$, and hence
\begin{equation*}\begin{split} C'(\chi) = \clq = \left[\prod_{p}\left\{\left(1 -\frac{1}{p}\right)^4\sum_{m=0}^\infty \frac{\chi_0(p^m)d_2(p^m)^2}{p^m}\right\}\right] \left[\frac{G(3)^2}{G(5)}\right]. \end{split}\end{equation*}
Recall that
\begin{equation*} c_2 = \left[\prod_{p}\left\{\left(1 -\frac{1}{p}\right)^4\sum_{m=0}^\infty \frac{d_2(p^m)^2}{p^m}\right\}\right] \left[\frac{G(3)^2}{G(5)}\right]. \end{equation*}
Thus, we see that
\begin{equation} \label{eqn: c'chi} C'(\chi) = c_2 \prod_{p \mid q} \left\{\sum_{m=0}^\infty \frac{d_2(p^m)^2}{p^m}\right\}^{-1}. \end{equation}

In light of (\ref{eqn: cchi}) and (\ref{eqn: c'chi}), it suffices to note the power series equality 
\begin{equation*} \sum_{m=0}^\infty d_2(p^m)^2 z^m = \frac{1 + z}{(1 - z)^3}, \end{equation*}
for $|z|<1$, as then plugging in $z = 1/p$ and taking products over $p \mid q$ gives us $C(\chi) = C'(\chi)$. To see the above power series equality, note that $d_2(p^m) = m+1$ and hence this follows straightforwardly from the geometric series formula.

Now, we turn to showing $D(\chi,\nu) = D'(\chi,\nu)$. Since $D'(\chi,\nu) = \clq$ for $\vg{\ell} = \vg{\delta}^\chi + \vg{\delta}^\nu$, hence $\LL(s) = L(s,\chi)L(s,\nu)$ and $\dl = \chi*\nu$, where $*$ denotes Dirichlet convolution. Because $\nu$ is completely multiplicative, $\dl(n) = \nu(n) \{1*(\chi\ov{\nu})\}(n)$. In particular, it follows that $|\dl(n)|^2$ depends only on $\chi\ov{\nu}$ and not the individual characters $\chi$ and $\nu$. Thus, $D'(\chi,\nu)$ also depends only on $\chi\ov{\nu}$. By inspection, we see that $D(\chi,\nu)$ also depends only on $\chi\ov{\nu}$. Thus, without loss of generality, we can assume that $\nu = \chi_0$.  It now suffices to show that for $\chi \neq \chi_0$, $D(\chi,\chi_0) = D'(\chi,\chi_0)$.

For $\vg{\ell} = \vg{\delta}^\chi + \vg{\delta}^{\chi_0}$, we see that $\lambda = 1^2 + 1^2 = 2$. Further, $\dl(n) = \chi_0(n) \{1*\chi\}(n)$. Finally the product over $\chi$ in the expression for $\clq$ vanishes, since $G(1)^2/G(3) = 1$. Thus, we get
\begin{equation} \label{eqn: d'chi}D'(\chi,\chi_0) = \prod_p \left\{\left(1 - \frac{1}{p}\right)^2 \sum_{m=0}^\infty \frac{\chi_0(p^m)|(1*\chi)(p^m)|^2}{p^m}\right\} \end{equation}

Now, using the Euler product formulae,
\begin{equation*} \frac{6}{\pi^2} = \frac{1}{\zeta(2)} = \prod_{p} \left(1 - \frac{1}{p^2}\right) \end{equation*}
and
\begin{equation*} L(1,\chi) = \prod_{p} \frac{1}{1 - \chi(p)p^{-1}}, \end{equation*}
where the latter holds because $\chi \neq \chi_0$, we see that
\begin{equation} \label{eqn: dchi}\begin{split} D(\chi,\chi_0)&{} = \frac{6}{\pi^2} |L(1,\chi)|^2 \frac{\varphi(q)}{q}\prod_p \left(1 - \frac{1}{p+1}\right)  \\&{}= \left\{\prod_p \frac{1 - p^{-2}}{(1 - \chi(p)p^{-1})(1 - \ov{\chi}(p)p^{-1})}\right\} \left\{ \prod_{p\mid q} \frac{1-p^{-1}}{1+p^{-1}}\right\}. \end{split} \end{equation}

Comparing the local factors corresponding to primes $p$ dividing $q$, we see that for $D'(\chi,\chi_0)$ these are $(1 - p^{-1})^2$, while for $D(\chi,\chi_0)$, they are
\begin{equation*} \frac{(1 - p^{-2})(1 - p^{-1})}{1 + p^{-1}} = (1 - p^{-1})^2. \end{equation*}
Thus, it remains to check the local factors corresponding to primes $p$ which are coprime to $q$. For $D(\chi,\chi_0)$, these are of the shape
\begin{equation*} \frac{1 - p^{-2}}{(1-\chi(p)p^{-1})(1-\ov{\chi}(p)p^{-1})}, \end{equation*}
while for $D'(\chi,\chi_0)$, these are of the shape
\begin{equation*} \left(1 - \frac{1}{p}\right)^2 \sum_{m=0}^\infty \frac{|(1*\chi)(p^m)|^2}{p^m}. \end{equation*}

Thus, to prove $D(\chi,\chi_0) = D'(\chi,\chi_0)$ it clearly suffices to prove the power series equality
\begin{equation*} \frac{1+z}{(1 - \omega z)(1 - \ov{\omega} z)} = (1 - z) \sum_{m=0}^\infty \left|\sum_{j=0}^m \omega^j\right|^2 z^m, \end{equation*}
for $|z| < 1$ and $|\omega| = 1$, as then plugging in $z = 1/p$, $\omega = \chi(p)$ and multiplying both sides by $(1 - p^{-1})$ gives us the desired equality. 

To prove this power series equality note that both sides are equal to 
\begin{equation} \label{eqn: powerseries} \sum_{m\geqs 0} \left(\sum_{|j|\leqs m} \omega^j\right) z^m, \end{equation}
where the sum over $j$ runs through all integers in $[-m,m]$. For the right hand side, this follows from opening the square; for the left hand side it follows from the geometric series formula.

This discussion shows that the conjectural constants $\clq$ from Theorem~\ref{thm: L-asymptotic} are correct for $\vg{\ell} = \vg{\delta}^\chi + \vg{\delta}^\nu$ where $\chi,\nu$ are not necessarily distinct Dirichlet characters modulo $q$. One could, in principle, use Topacogullari's results from \cite{topacogullari2020fourth} to verify the analoguous constants for products of the form $L(s,\chi)L(s,\nu)$ with $\chi,\nu$ possibly having distinct moduli. 

We now return to the proof of Theorem~\ref{thm: rationalfourthmoment}. We will set $k = 2$ in (\ref{eqn: sumoverl}), and use the same classification for the different terms that arise in the right hand side of (\ref{eqn: sumoverl}) as from Section~\ref{subs: computing}. 

We state some lemmata. Their proofs are analogous to the corresponding ones from Section~\ref{subs: computing} and hence the details are omitted. 

\begin{lemma} \label{lem: major2} Suppose that $\chi$ and $\nu$ are distinct characters modulo $q$. Then, for $s = 1/2 + it$, 

\begin{equation*} \int_T^{2T} L(s,\chi)^2 \ov{L(s,\nu)}^2 \ll_q T(\log T)^2. \end{equation*}
\end{lemma}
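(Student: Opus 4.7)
The plan is extremely short: this is really a one-line consequence of Proposition~\ref{prop: Lmean}, because the integrand is bounded pointwise by its absolute value, and that absolute value is exactly the quantity whose mean square was computed there. Concretely, I would observe that on the critical line $s = 1/2 + it$ one has the pointwise identity
\begin{equation*}
\left|L(s,\chi)^2 \ov{L(s,\nu)}^2\right| = |L(s,\chi)|^2 |L(s,\nu)|^2 = |L(s,\chi) L(s,\nu)|^2,
\end{equation*}
so the triangle inequality for integrals gives
\begin{equation*}
\left|\int_T^{2T} L(s,\chi)^2 \ov{L(s,\nu)}^2 \, dt\right| \leqs \int_T^{2T} |L(s,\chi) L(s,\nu)|^2 \, dt.
\end{equation*}

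Applying Proposition~\ref{prop: Lmean} to the right-hand side yields $D(\chi,\nu) T(\log T)^2 + O_q(T\log T) \ll_q T(\log T)^2$, which is exactly the desired bound. There is no real obstacle, since the heavy lifting has already been done by Topacogullari's fourth-moment result; the only thing worth remarking is that this bound is sharp in the exponent of $\log T$ (we make no attempt to exploit cancellation from the absence of complex conjugation on one factor, since we do not need to). This is the analogue for $k=2$ of Lemma~\ref{lem: major}, but unlike that lemma it is unconditional, because Proposition~\ref{prop: Lmean} replaces the use of the $\GRH$- or $\Sp$-conditional Lemma~\ref{lem: L-upperbound}. Once Lemma~\ref{lem: major2} is in hand, exactly as in Section~\ref{subs: computing}, it shows that the major off-diagonal terms contribute only $O_q(T(\log T)^2) = o_q(T(\log T)^4)$ to the $k=2$ version of (\ref{eqn: sumoverl}), which is negligible compared with the primary diagonal contribution of order $T(\log T)^4$.
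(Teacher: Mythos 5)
Your proof is correct and matches the paper's intent: the paper simply says the argument is "analogous to Lemma~\ref{lem: major}," where for $k=2$ the Cauchy--Schwarz factorization $L(s,\chi)^2\ov{L(s,\nu)}^2 = [L(s,\chi)\ov{L(s,\nu)}]\cdot[L(s,\chi)\ov{L(s,\nu)}]$ collapses to the triangle inequality you use, and the conditional appeal to Lemma~\ref{lem: L-upperbound} is replaced by the unconditional Proposition~\ref{prop: Lmean}. You have also correctly identified why this version is unconditional, which is exactly the point the paper makes just before stating this lemma.
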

\begin{proof} This is analogous to Lemma~\ref{lem: major}. \end{proof}

\begin{lemma} \label{lem: minor2} Suppose that $\vg{\ell}^{(1)}$ and $\vg{\ell}^{(2)}$ are tuples of nonnegative integers indexed by characters modulo $q$ satisfying $|\vg{\ell}^{(1)}| = |\vg{\ell}^{(1)}| = 2$. Further, suppose that $\vg{\ell}^{(1)}$ and $\vg{\ell}^{(2)}$ corresponds to a minor off-diagonal term. Then, for $s = 1/2 + it$, 
\begin{equation*} \int_T^{2T} \LLa(s)\ov{\LLb(s)} \,dt \ll_q T(\log T)^{3}. \end{equation*}
\end{lemma}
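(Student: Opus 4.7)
The plan is to mirror the proof of Lemma~\ref{lem: minor}, replacing the appeal to Lemma~\ref{lem: L-upperbound}, which was conditional on $\GRH$ or $\Sp$, with the unconditional mean-square bounds implicit in Propositions~\ref{prop: Lfourth}~and~\ref{prop: Lmean}. The key simplification at $k=2$ is that any tuple $\vg{\ell}$ with $|\vg{\ell}| = 2$ takes exactly one of two shapes, and both have been handled unconditionally by Topacogullari.

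First I would observe that if $|\vg{\ell}| = 2$, then either $\vg{\ell} = 2\vg{\delta}^\chi$ for some character $\chi$, in which case $\LL(s) = L(s,\chi)^2$ and $\lambda(\vg{\ell}) = 4$, or else $\vg{\ell} = \vg{\delta}^\chi + \vg{\delta}^\nu$ for distinct characters $\chi, \nu$, in which case $\LL(s) = L(s,\chi)L(s,\nu)$ and $\lambda(\vg{\ell}) = 2$. In either case, Proposition~\ref{prop: Lfourth} or Proposition~\ref{prop: Lmean} gives the unconditional upper bound
\[ \int_T^{2T} \left|\LL\left(\tfrac{1}{2}+it\right)\right|^2 \,dt \ll_q T(\log T)^{\lambda(\vg{\ell})}. \]

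Next I would unpack the definition of a minor off-diagonal term. By hypothesis $(\vg{\ell}^{(1)}, \vg{\ell}^{(2)})$ is neither a diagonal term (primary or secondary) nor a major off-diagonal term, so at least one of $\vg{\ell}^{(1)}, \vg{\ell}^{(2)}$ fails to be of the form $2\vg{\delta}^\chi$, which forces that tuple into the second shape above with $\lambda = 2$. By symmetry of the integrand I may assume $\lambda(\vg{\ell}^{(1)}) = 2$, whereas $\lambda(\vg{\ell}^{(2)}) \leqs 4$.

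Finally, the Cauchy--Schwarz inequality together with the mean-square bound above gives
\begin{equation*}
\begin{split}
\int_T^{2T} \LLa(s)\ov{\LLb(s)}\,dt &{}\ll \left(\int_T^{2T} |\LLa(s)|^2 \,dt\right)^{1/2}\left(\int_T^{2T} |\LLb(s)|^2 \,dt\right)^{1/2} \\
&{} \ll_q \left(T(\log T)^2 \cdot T(\log T)^4\right)^{1/2} = T(\log T)^3,
\end{split}
\end{equation*}
which is the claimed bound. There is no genuine obstacle: the argument is Cauchy--Schwarz plus the unconditional inputs from \cite{topacogullari2020fourth}, and the case analysis on the shape of $\vg{\ell}^{(j)}$ is trivial at $|\vg{\ell}| = 2$. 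The worst case, $(\lambda(\vg{\ell}^{(1)}),\lambda(\vg{\ell}^{(2)})) = (2,4)$ or $(4,2)$, is saturated and produces exactly the exponent $3 = (2+4)/2$ in the lemma.
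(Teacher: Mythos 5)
Your proposal is correct and matches the paper's intended argument: the paper's proof is simply the one-line ``This is analogous to Lemma~\ref{lem: minor}.'' You correctly identify that the only change needed is to replace the conditional appeal to Lemma~\ref{lem: L-upperbound} with the unconditional bounds $\int_T^{2T}|\LL(\tfrac12+it)|^2\,dt \ll_q T(\log T)^{\lambda(\vg{\ell})}$ that follow from Propositions~\ref{prop: Lfourth}~and~\ref{prop: Lmean} (a point the paper itself makes just before stating Lemmas~\ref{lem: major2}~and~\ref{lem: minor2}), and that at $|\vg{\ell}|=2$ the case analysis on $\lambda \in \{2,4\}$ is exhaustive; Cauchy--Schwarz then delivers exactly the exponent $3 = (2+4)/2$, avoiding the spurious $\epsilon$ loss that would come from transplanting the $k^2 - 1/2 + \epsilon$ exponent of Lemma~\ref{lem: minor} verbatim.
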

\begin{proof} This is analogous to Lemma~\ref{lem: minor}. \end{proof}

We can now prove the theorem. Putting $k = 2$ in (\ref{eqn: sumoverl}), we get
\begin{equation} \label{eqn: sumoverlrat} \frac{q^{2}}{\varphi(q)^{4}} \sum_{\substack{|\vg{\ell}^{(1)}| = 2,\\|\vg{\ell}^{(2)}|=2}}\binom{k}{\vg{\ell}^{(1)}} \binom{k}{\vg{\ell}^{(2)}} \left[\prod_{\chi} \chi(a)^{\ell^{(2)}_{\chi} - \ell^{(1)}_{\chi}}\right] \int_T^{2T}\LLa(s)\ov{\LLb(s)}\,dt. \end{equation}
We now use Proposition~\ref{prop: Lfourth} to deal with the terms with the primary diagonal terms (i.e. those corresponding to $\vg{\ell}^{(1)} = \vg{\ell}^{(2)} = 2\vg{\delta}^\chi$). The discussion from Section~\ref{subs: computing} tells us that summing the main terms from Proposition~\ref{prop: Lfourth} over $\chi$ contributes $c_2(\alpha) T(\log T)^{4}$, which gives the main term in Theorem~\ref{thm: rationalfourthmoment}. Note that this matches up with the conjectural constant from Theorem~\ref{thm: constant} for $k = 2$. 

It remains to show that all the remaining terms can be absorbed in the error term in Theorem~\ref{thm: rationalfourthmoment}. We do this by applying Proposition~\ref{prop: Lmean}, Lemma~\ref{lem: major2} and Lemma~\ref{lem: minor2} appropriately to terms in (\ref{eqn: sumoverlrat}), depending on their classification. There are $\ll \varphi(q)^4$ such terms in (\ref{eqn: sumoverlrat}), and they each contribute at most $\ll_q T(\log T)^3$. This completes the proof.

\bibliography{hurwitz}
\bibliographystyle{abbrv}

\end{document}